\documentclass[10pt]{amsart}
\usepackage[utf8]{inputenc}
\usepackage[active]{srcltx}
\usepackage{amsmath,amssymb,amsthm,array,euscript, mathrsfs,version,geometry,textgreek,tabularx,
}

\usepackage[all]{xy}



\newif\ifPDF
\ifx\pdfoutput\undefined\PDFfalse
\else\ifnum\pdfoutput >0\relax\PDFtrue\else\PDFfalse\fi
\fi
\def\arxiv#1{\href{http://arxiv.org/abs/#1}{\tt arXiv:#1}} \let\arXiv\arxiv

\PDFtrue
\ifPDF
    \message{**PDF**}
   \usepackage{aeguill}
   \usepackage[pdftex]{graphicx,color}
   \usepackage[pdftex,plainpages=false]{hyperref}
\else
    \message{**not PDF**}
   \usepackage[dvips]{hyperref}
\fi
\hypersetup{unicode=true}
\hypersetup{colorlinks=false}

\usepackage{mymacro} 
\rncmd\todo[1][]{}



%
\makeatletter

\newtheoremstyle{theorem*}
  {3pt}
  {3pt}
  {\itshape}
  {}
  {\bfseries}
  {.}
  {.5em}
  {\thmnote{#1\@ifnotempty{#3}{ }#3}}

\theoremstyle{theorem*}
\newtheorem*{thm*}{Theorem}
\newtheorem*{prop*}{Proposition}

\rncmd\L\cL
\ncmd{\Dh}{\D_\hbar}
\ncmd{\ohf}{\omega^{1/2}}
\ncmd{\Lochf}{\Loc_{\ohf}}
\ncmd{\Ldet}{\L_{\on{det}}}
\ncmd{\tcurv}{\widetilde{\on{curv}}}
\ncmd\ty{\rT_\rY}
\ncmd\tx{\rT_\rX}
\ncmd\gy{\cG_\rY}
\ncmd\gx{\cG_\rX}
\ncmd\gtx{\cG_{\tx}}
\ncmd\gty{\cG_{\ty}}
\ncmd\frr{\fr[2]}
\ncmd\bo{\rB^\circ}
\ncmd\bof{(\bo)\fr}
\ncmd\xv{\rX\chk}
\ncmd\dd\gD
\ncmd\g\cG
\ncmd\loc{\Loc^\circ}
\ncmd\tth{\tilde\th}
\ncmd\yy\rY
\ncmd\om\omega
\ncmd\tc{\tilde\rC}
\ncmd\tco{\tc^\circ}
\ncmd\hig{\Higgs^\circ}
\ncmd\ddbuo{\dd_{\Bun}^\circ}
\ncmd\prg[1]{\pr_{\Gamma,#1}}
\ncmd\Tcf{T^*C\fr}
\ncmdd\thcf{\th_C\fr}
\ncmd\Smsch{\on{Sch}^{\on{sm}}_\k}
\ncmd\Gmna{\GG_m^\na}
\DMO\RcEnd{\mathit R{\cEnd}}
\DMO\RcHom{\mathit R{\cHom}}
\ncmd\mxg{\rM_{X,\g}}
\ncmd\Om\Omega
\ncmd\Oga{\Om_{\Gamma_A}}
\ncmd\iOga{\iOm_{\Gamma_A}}
\ncmd\vphi\varphi
\ncmd\rt\rT
\ncmd\iOm\varOmega
\ncmd\rmo{\rM^\circ}
\ncmd\tLoc{\wtld\Loc}
\ncmd\Locchism{\Loc^{\chi\text-\mathit{sm}}}
\ncmd{\Oloc}{\iOm_{\Loc}}
\ncmd\kkk\todo
\DefRm{\AJ\Gr}
\DefOps{\Frob}

\def\@DchL#1{\D_{#1}^{\L}}
\def\DchL@#1<#2>{\D_{#1}^{#2}}

\ncmd{\DchL}[1][\bc,\hbar]{\@ifnextchar<{\DchL@{#1}}{\@DchL{#1}}}

\def\nbhp-){\mbox{-})}
\newcommand\mynobreakpar{\par\nobreak\@afterheading}

\makeatother

\ncmd{\sm}{\SmSup{\mathit{sm}}}
\let\mathcal\CMcal
\ncmd{\Lna}{(\L,\na)}
\ncmd{\nlbk}{\nolinebreak}

\ncmd{\Connhf}{{\on{Conn}_{1/2}^{\it coh}}}
\ncmd{\Conn}{\on{Conn}^{\it coh}}
\ncmd{\Tblt}{\cT^\blt}
\ncmd{\tTblt}{\tilde\cT^\blt}

\ncmd{\Lc}{{\L^c}}
\ncmd{\Df}{\D_{\rY,\Lc}^\flat}
\ncmd{\Ds}{\D_{\rY,\Lc}^\sharp}
\ncmd{\tDf}{\tilde\D_{\rY,\Lc}^\flat}
\ncmd{\tDs}{\tilde\D_{\rY,\Lc}\SmSup\sharp}

\ncmd\bc{{\mathbf c}}
\ncmd\Dual\eD 

\DeclareRobustCommand{\SkipTocEntry}[5]{}

\numberwithin{equation}{section}
\numberwithin{thm}{section}
\numberwithin{defn}{section}
\numberwithin{rem}{section}

\DMO*{\bigboxtimes}{\vcenter{\hbox{\Huge$\boxtimes$}}}
\DeclareUnicodeCharacter{2207}{\textnabla}

\begin{document}

\title[Quantum geometric Langlands in positive characteristic]%
        {Quantum geometric Langlands correspondence in positive characteristic: the $\GL_N$ case}
\author{Roman Travkin}
\email{travkin@alum.mit.edu}

\begin{abstract}
  We prove a version of quantum geometric Langlands conjecture in characteristic~$p$.  Namely,
  we construct an equivalence of certain localizations of derived categories of twisted crystalline $\D$-modules 
  on the stack of rank~$N$ vector bundles on an algebraic curve $C$ in characteristic~$p$.  The twisting parameters are related in the way predicted by the conjecture, and are assumed to be irrational (\ie not in $\Fp$).
  We thus extend the results of~\cite{BB} concerning the similar problem for the usual (non-quantum) geometric Langlands.

  In the course of the proof, we introduce a generalization of $p$-curvature for line bundles with non-flat connections, define quantum analogs of Hecke functors in characteristic~$p$ and construct a Liouville vector field on the space of de~Rham local systems on~$C$.
\end{abstract}

\maketitle

\setcounter{tocdepth}{3}
\tableofcontents

\section{Introduction}

Fix an algebraic curve~$C$ and a reductive group~$G$. The geometric Langlands correspondence (GLC) is a conjectural equivalence of derived categories between $\D$-modules on the moduli space $\Bun_G$ of $G$-bundles on~$C$ and quasi-coherent sheaves on the moduli space~$\Loc$ of local systems for the Langlands dual group~$^LG$.  It has a classical (commutative) limit which amounts to the derived equivalence of Fourier--Mukai type between Hitchin fibrations for $G$ and~$^LG$.  The latter is a fibration $T^*\Bun_G \to \rB$ over an affine space with generic fibers being abelian varieties (or a little more general commutative group stacks).

In~\cite{BB}, a characteristic~$p$ version of GLC is established.  Namely, the setup of crystalline (\ie without divided powers) $\D$-modules in positive characteristic is considered.  In this setting, the category of $\D$-modules does not get far from its classical limit: it is described by a $\Gm$-gerbe on the Frobenius twist of the cotangent bundle.  So the GLC becomes a twisted version of its classical limit.  Based on this reasoning, the GLC is constructed ``generically'' for the case of general linear group $G = \GL_N$.

In this paper, we apply the same technique to the \emph{quantum} version of GLC introduced by B.~Feigin and A.~Stoyanovsky in \cite{S1,S2}.  This deformation of GLC has been studied in~\cite{KW,K} in relation to quantum field theory (see also~\cite{F}), and in \cite{Ga-Whit,Ga-notes} in the new approach geometric Langlands.  It has the same classical limit, but now both sides are ``quantized.''  We note that in the abelian case when $G=\Gm$ (\ie $N=1$) the quantum GLC is realized as a Fourier--Mukai transform on the Jacobian (or the Picard stack) of~$C$, and falls in the framework of~\cite{PR}.

Now, considering quantum geometric Langlands in characteristic~$p$, we get (generically) a twisted version of the same Fourier--Mukai transform as for the classical case.  However, the proof that the twistings on both sides are interchanged by the Fourier--Mukai transform is more complicated than in the case of usual GLC, and contains several new ingredients.  Also, we restrict to the case of irrational parameter~$c$ because there is a certain degeneration happening at rational~$c$.

First, we need a description of the category of modules for a twisted differential operator (TDO) algebra.  The center of a TDO was already described in~\cite{BMR}, but the description of the corresponding gerbe presented here seems to be new.  A convenient language for this turns out to be that of {\em extended curvature}---an invariant of a line bundle with a ({\em not necessarily flat}) connection taking values in a canonical coherent sheaf~$\eF$ on the Frobenius twist of the variety.  This is a generalization of the $p$-curvature of flat connections.  Just as with the usual $p$-curvature, every section of~$\eF$ defines a gerbe whose splittings correspond to connections with that extended curvature.  Now, if $\L$ is a line bundle and $c \in \k \setm \Fp$ then the gerbe describing $\D_\Lc$ corresponds to $c \cdot \a_\L$ where $\a_\L$ is the extended curvature of the pullback of~$\L$ to the associated twisted cotangent, equipped with the canonical (``universal'') connection.

We then apply this to the determinant bundle on~$\Bun$ whose corresponding twisted cotangent is identified with~$\Loc$.  So, to construct the desired equivalence, we have to split a gerbe on the fiber product of dual $p$-Hitchin fibrations $\Loc \to \rB\fr$.  (Although for~$\GL_N$ the $p$-Hitchin fibrations are the same, we use differently scaled projections to the Hitchin base.)  This is done by constructing an explicit line bundle with connection on this fiber product.  The problem then reduces to proving certain equality involving $\tilde\th = \a_\L$ for $\L$ being the determinant bundle on~$\Bun$, and the torsor structure on the $p$-Hitchin fibration.

We prove this property for another section~$\tilde\th_0$ of~$\eF_{\Loc}$ and then show that $\tilde\th = \tilde\th_0$.
The section~$\tilde\th_0$ is constructed from a vector field~$\xi_0$ on~$\Loc$ lifting the differential of the standard $\Gm$-action on the Hitchin base.  This vector field comes from an action of fiberwise dilations of~$T^*C\fr$ on the gerbe describing $\D$-modules.  Such structure is not unique (it depends on the choice of lifting of~$C$ to the 2nd Witt vectors of~$\k$), however the corresponding vector fields~$\xi_0$ all differ by Hamiltonian vector fields and give rise to the same~$\tilde\th_0$.


\subsection{Quantum geometric Langlands conjecture}
Let $C$ be a smooth irreducible projective curve of genus $g > 1$ over an algebraically closed field $\k$ of characteristic 0 and $G$ a reductive algebraic group.  We denote by $\Bun_G = \Bun_G (C)$ the moduli stack of $G$-bundles on~$C$.  The quantum geometric Langlands correspondence is a conjectural equivalence between certain derived categories of twisted $\D$-modules on $\Bun_G$ and $\Bun_{^LG}$ where $^LG$ denotes the Langlands dual group.  The twistings should correspond to invariant bilinear forms on the Lie algebras of $G$ and~$^LG$ that induce dual forms on the Cartan subalgebras (up to the shift by the critical level).  When one of the forms tends to~0 the other tends to infinity, which corresponds to degeneration of the TDO algebra into a commutative algebra of functions on a twisted cotangent bundle to $\Bun_G$.  This shows that the quantum geometric Langlands is a deformation of the classical geometric Langlands, which is an equivalence between the category of (certain) $\D$-modules on $\Bun_G$ and the category of (certain) quasi-coherent sheaves on the stack $\Loc_{^LG}$ of $^LG$--local systems on~$C$.

We will be interested in the case $G = \GL_N$---the general linear group.  In this case, we think of the quantum Langlands correspondence as an equivalence
\[
    \D_{\Bun_N,\Ldet^c\tsr\ohf_{\Bun}}\hmod \isoto \D_{\Bun_N,\Ldet^{-1/c}\tsr\ohf_{\Bun}}\hmod
\]
where $\Ldet$ is the determinant line bundle on~$\Bun_N = \Bun_G = \Bun_{^LG}$ given by $(\Ldet)_b = \det \RGam(C, \cE_b)$ for any $b \in \Bun_N$ where $\cE_b$ denotes the rank~$N$ vector bundle corresponding to~$b$, and $\omega_{\Bun}$ is the line bundle of top forms (determinant of cotangent complex) on~$\Bun$.  (There is a subtle question of what kind of $\D$-modules one should consider, but we'll ignore it for now.)

\subsection{Geometric Langlands for  $\D$-modules in characteristic~$p$}
In~\cite{BB}, R.~Bezrukavnikov and A.~Braverman established a version of the classical geometric Langlands correspondence for  ``crystalline'' $\D$-modules over a field~$\k$ of characteristic $p > 0$.  Recall that, for a smooth scheme~$X$ over~$\k$, the sheaf~$\D_X$ of {\em crystalline differential operators} is defined as the universal enveloping algebra of the Lie algebroid~$\cT_X$ of vector fields on~$X$.   The main tool for studying modules over such algebras is their Azumaya property (see~\cite{BMR}).  Namely, $\D_X$ turns out to be isomorphic to (the pushforward to~$X$ of) an Azumaya algebra~$\tilde\D_X$ on $T^*X\fr$---the cotangent bundle to the Frobenius twist of~$X$.  This allows one to identify the category of $\D$-modules on~$X$ with the category of coherent sheaves on a $\Gm$-gerbe on $T^*X\fr$.

This observation is generalized in~\cite{BB} to the case of (a certain class of) algebraic stacks.  Namely, for an irreducible smooth Artin stack~$\rY$ over~$\k$ with $\dim T^*\rY = 2 \dim \rY$ (\ie $\rY$ is good in the sense of~\cite{BD}), they construct a sheaf~$\tilde\D_\rY$ of algebras on~$T^*\rY\fr$ with properties similar to the Azumaya algebra~$\tilde\D_X$ described above.  The pushforward of~$\tilde\D_\rY$ to~$\rY\fr$ is isomorphic to $\Fr_{\rY*} \D_\rY$ where $\D_\rY$ is the sheaf of differential operators as defined in~\cite{BD}. Moreover, the restriction of~$\tilde\D_\rY$ to the maximal open smooth Deligne--Mumford substack of~$T^*\rY\fr$ is an Azumaya algebra.\footnote{In  fact, this construction can be strengthened, namely, one can define a $\Gm$-gerbe on {\em all of $T^*\rY\fr$}, not just its smooth part, as we show in~\ref{subs:TDO stk}.  This gerbe classifies $\D$-modules on~$\rY$, defined in a way similar to \cite[Sect.~1.1]{BD}.  The ``regular'' (as in ``free rank~1'') $\D$-module, however, corresponds to a coherent sheaf on this gerbe which is locally free only on the smooth part---that's why $\tilde\D_\rY$, which is (opposite of) the endomorphism algebra of this coherent sheaf, is an Azumaya algebra only on that smooth locus.}

The stack $\Bun_N$ is almost ``good,'' namely, it locally looks like product of a good stack and $B\Gm$.  So one can apply the above construction to $\rY = \Bun_N$ to get an Azumaya algebra on $T^*\Bun_N\fr = \Higgs\fr$.  The latter stack is the total space of the Hitchin fibration $h\fr \colon \Higgs\fr \to \rB\fr$ whose generic fibers are Picard stacks of (spectral) curves.  On the dual side, one has the ``$p$-Hitchin'' map $\Loc \to \rB\fr$ given by $p$-curvature.  Generic fibers of this map are torsors over the same Picard stacks, and each point of such a torsor (which corresponds to $G$--local system on~$C$ with given spectral curve) gives a splitting of~$\tilde\D_{\Bun}$ on the corresponding fiber of~$H\fr$.  This splitting defines a Hecke eigensheaf corresponding to the local system.  The geometric Langlands is thus realized as a twisted version of Fourier--Mukai transform.

\subsection{Main result}
In this paper, the same ideas are applied to quantum geometric Langlands correspondence.  To that end, we generalize the above Azumaya algebra construction to the case of twisted differential operators.  The only TDO algebras we will encounter are of the form $\D_\Lc$ where $\L$ is a line bundle and $c \in \k$ (and external tensor products of such).  In this case, the situation is essentially analogous to the non-twisted case, except that the Azumaya algebra will now live on the twisted cotangent bundle, where the twisting is given by $(c^p-c)$ times the Chern class of~$\L\fr$ (cf.~\cite{BMR}).  We will only consider the case of irrational $c$ (\ie $c \not\in \Fp$): in this case one can identify this twisted cotangent bundle with the Frobenius twist of the space~$\tilde T^*_\L X$ of 0-jets of connections on~$\L$.  This is discussed in~\ref{TDO}.

It is not hard to extend it to the stack case using the above-mentioned results from~\cite{BB} for usual $\D$-modules on stacks.  Thus, for a line bundle~$\L$ on a good stack~$\rY$, one gets an Azumaya algebra~$\tilde\D_{\rY,\Lc}$ on the smooth part of~$(\tilde T^*_\L \rY)\fr$.  (For a discussion of twisted cotangent bundles to stacks, see~\ref{appx|subs:general}.)

\medskip
We apply this to the determinant bundle $\Ldet$ on $\Bun$.  One can check (see Appendix~\ref{appx}) that the corresponding twisted cotangent is identified with the moduli space~$\Lochf$ of rank~$N$ bundles on~$C$ endowed with an action of the TDO algebra~$\D_{\ohf}$.  In fact, we can identify $\Lochf$ with $\Loc$ by tensoring bundles with $\omega^{\tsr(p-1)/2}$.  Thus, both sides of the quantum Langlands are described (again, generically over the Hitchin base) by certain gerbes on~$(\Loc^\circ)\fr$.  Here $\Loc^\circ = \Loc \xx_{\rB\fr} (\bo)\fr$ and $\bo \ctd \rB$ is the open part parametrizing smooth spectral curves.  Using the $p$-Hitchin map as above (this time to $\rB\frr$), we see that these gerbes live on two torsors over the relative Picard stack mentioned above.  So we get again two ``twisted versions'' of the derived category of coherent sheaves on this Picard stack.  In contrast to the classical (non-quantum) case, however, we have both ``torsor'' and ``gerby'' twists on each side.  These two kinds of twists are interchanged by Fourier--Mukai duality.

In other words, we prove the following:
\begin{thm}\label{main_thm}
  Suppose $p$ is large enough relative to $N$ and the genus of~$C$.\footnote{This assumption will only be used in the proof of Lemma~\ref{lem transv} on generic transversality of fibers of two projections.  For the rest of the argument, $p>2$ is sufficient.}
  Then there is an equivalence between bounded derived categories of modules for $\D_c = \tilde\D_{\Bun,\Ldet^c} |_{(\Loc^\circ)\fr}$ and $\D_{-1/c} = \tilde\D_{\Bun,\Ldet^{-1/c}} |_{(\Loc^\circ)\fr}$.  The corresponding kernel is a splitting of $\D_c \bx \D_{-1/c}^{\rm{op}} \sim \D_c \bx \D_{1/c}$ on the fiber product $(\Loc^\circ)\fr \xx_{\rB\frr,c^p} (\Loc^\circ)\fr$ where the projection from the second factor to~$\rB\frr$ is modified by the action of $c^p \in\Gm$.  If we choose, locally on~$\rB\frr$, a trivialization of the torsor $\Loc\fr\to \rB\frr$, then there are splittings of $\D_c$~and~$\D_{-1/c}$ such that the equivalence is identified with the Fourier--Mukai transform on the Picard stack $\Pic((\tilde\rC^\circ)\frr/\rB\frr)$.  (Here $\tilde\rC^\circ \ctd T^*C \x \bo$ is the universal spectral curve.)
\end{thm}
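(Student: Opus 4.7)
The plan is to realize each side as the category of coherent sheaves on an Azumaya gerbe over $(\Loc^0)\fr$ and to produce an explicit splitting of the external product gerbe on the fiber product; that splitting will be the Fourier--Mukai kernel. By the extended-curvature formalism announced in the introduction, the gerbe $\D_c = \tilde\D_{\Bun,\Ldet^c}|_{(\Loc^0)\fr}$ corresponds to the class $c\cdot\tilde\th \in \eF_{\Loc}$, where $\tilde\th := \a_{\Ldet}$ is the extended curvature of the canonical connection on the pullback of $\Ldet$ to its twisted cotangent (under the identification $\Lochf \cong \Loc$); similarly $\D_{-1/c}$ is described by $(-1/c)\cdot\tilde\th$, but pulled along the projection to $\rB\fr[2]$ modified by the $c^p$-action on the Hitchin base.

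Next I would construct the candidate kernel. Locally on $\rB\fr[2]$, trivialize the torsor $\Loc\fr \to \rB\fr[2]$ so that the fiber product becomes (after Frobenius twist) a product of two copies of the relative Picard stack $\Pic((\tilde\rC^0)\fr[2]/\rB\fr[2])$, and take the Poincar\'e line bundle $\mathcal{P}$ on this product equipped with a natural connection $\na$ flat along the base. That $(\mathcal{P},\na)$ splits the external product gerbe $\D_c \bx \D_{1/c}$ amounts, via the extended-curvature description, to a single identity relating $\tilde\th$ to the commutative group-stack structure on the $p$-Hitchin torsor; since the identity is formulated intrinsically, proving it in any one trivialization suffices by descent to give the global splitting.

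The identity is attacked in two stages, following the strategy outlined in the introduction. First, introduce an auxiliary section $\tilde\th_0 \in \eF_{\Loc}$ obtained from a vector field $\xi_0$ on $\Loc$ that lifts the Euler field on $\rB\fr$; this lift arises from a $\Gm$-action of fiberwise dilations of $T^*C\fr$ on $\tilde\D_\Bun$, requires an auxiliary lifting of $C$ to $W_2(\k)$, but is well-defined up to Hamiltonian vector fields, so $\tilde\th_0$ itself is canonical. In the dilation-equivariant picture a direct local calculation on the spectral cover verifies the desired torsor identity for $\tilde\th_0$. Second, one proves $\tilde\th = \tilde\th_0$; combined, these give the identity for $\tilde\th$ and hence the required splitting.

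The main obstacle is this last equality. The section $\tilde\th$ is defined intrinsically from the determinant-of-cohomology line bundle and its canonical connection on $\Loc$, while $\tilde\th_0$ is built from a dilation structure that depends on an auxiliary $W_2$-lift of $C$. Bridging the two demands a careful compatibility between the Knudsen--Mumford formation of $\Ldet$ on $\Bun$ and the cotangent-rescaling $\Gm$-action on $T^*C\fr$, and uses a Hamiltonian-uniqueness argument to pin down $\tilde\th - \tilde\th_0$. Once this identity is in hand, $(\mathcal{P},\na)$ provides the splitting of $\D_c \bx \D_{1/c}^{\rm{op}}$, the associated integral transform is the claimed equivalence, and standard Fourier--Mukai theory for the Picard stack $\Pic((\tilde\rC^0)\fr[2]/\rB\fr[2])$ identifies it with the classical transform in the chosen local trivializations.
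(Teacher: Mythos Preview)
Your overall architecture matches the paper: describe both sides via the extended-curvature gerbe $c\cdot\tilde\th$, build a Poincar\'e-type kernel from the determinant bundle and the addition map on the $p$-Hitchin torsors, reduce to an additivity identity for $\tilde\th$, and prove that identity by comparing with an auxiliary $\tilde\th_0$ coming from a Liouville field~$\xi_0$. Two points deserve comment.

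First, a smaller omission. Before the additivity identity you need a separate step: compatibility of $\tilde\th$ with the \emph{action} of $(\Higgs^0)\fr$ on $\Loc^0$ (the paper's Proposition~\ref{prop A} and formula~\eqref{tth id}). This is what identifies the dual torsor with $(\Loc^0)\fr$ in the first place, and it is also the input that later forces the discrepancy $\tilde\th-\tilde\th_0$ to descend to the Hitchin base. The paper proves it by an explicit Abel--Jacobi computation (Lemma~\ref{chdet}); your proposal treats it as part of the single ``torsor identity,'' but it is logically prior and has its own proof.

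Second, and more seriously, your plan for the equality $\tilde\th=\tilde\th_0$ is where the real gap lies. A ``Hamiltonian-uniqueness argument'' only shows that $\tilde\th_0$ is independent of the $W_2$-lift; it says nothing about why $\tilde\th_0$ agrees with the determinant-bundle class~$\tilde\th$. The paper's argument is substantially different and has several moving parts: (i) from Proposition~\ref{prop A} and the additivity of $\th_0$ one shows the difference is $\sP(\b_0)$ for a $1$-form $\b_0$ pulled back from $\rB\fr$; (ii) a filtration argument on the $\la$-connection stack $\wtld\Loc$ bounds the degree so that $\b_0|_{q^{-1}(b)}$ is translation-invariant on the fibers of $\Loc\to\Bun$; (iii) invariance under the Serre-duality involution~$\sigma$ forces the resulting weight-$1$ function on~$\rB$ to be both invariant and anti-invariant under $-1\in\Gm$, hence zero; (iv) a transversality statement between the fibers of $\Loc\to\Bun$ and $\Loc\to\rB\fr$ (for generic~$C$) then kills $\b_0$ entirely. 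None of this is a compatibility with Knudsen--Mumford or a Hamiltonian rigidity; it is a genuinely global argument on the Hitchin base, and your proposal does not yet contain it.
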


Note that, although the underlying spaces of the torsors are the same on both sides (namely $(\Loc^\circ)\fr$), in order to make the duality work, one has to normalize the projection to~$\rB\frr$ differently.  This can also be guessed by considering what happens at rational~$c$ (including $c = 0,\infty$).

We also remark that, although there are no critical twists mentioned in the statement of the theorem, for a smooth variety~$X$ in characteristic $p>2$ there is always an equivalence $\D_X\hmod\isoto\D_{X,\ohf_X}\hmod$ (and similarly for $\Lc$-twisted versions) given by tensoring by $\omega_X^{\tsr(1-p)/2}$.

\subsection{Extended curvature}
So, all we need to check is that the torsors with gerbes corresponding to $\D_c$ and $\D_{-1/c}$ are interchanged by Fourier--Mukai duality.  For that purpose we need a description of gerbes attached to TDO algebras.  Recall that in the non-twisted case, the splittings of~$\tilde\D_X$ on an open subset $U\fr \ctd T^*X\fr$ correspond to line bundles on~$U$ with flat connection of $p$-curvature equal to the canonical 1-form on~$T^*X\fr$.

To extend this description to the TDO case, we introduce a generalization of the notion of $p$-curvature to non-flat connections.  For a line bundle~$\L$ with connection~$\na$ on a smooth variety~$X$, we define in~\ref{extcrv} a section $\tcurv\Lna$ (called the {\em extended curvature}) of the quotient sheaf~$\eF_X$ of $\Om^1_X$ by locally exact forms.  This sheaf maps to~$\Om^2_X$ via de~Rham differential; this map carries $\tcurv\Lna$ to the usual curvature.  On the other hand, for flat connections, $\tcurv\Lna$ is a section of closed modulo exact forms, which corresponds to the $p$-curvature of~$\na$ under Cartier isomorphism. This construction also allows, starting from a section $\a\in \eF_X$ (such a section will sometimes be referred to as a {\em generalized one-form}), to define a $\Gm$-gerbe on~$X\fr$: its splittings correspond to line bundles with connection whose extended curvature is equal to~$\a$.

Now, the pullback of any line bundle~$\L$ to its associated twisted cotangent~$\tilde T^*_\L X$ acquires a canonical connection.  If $\a_\L$ denotes the extended curvature of this connection, the gerbe on~$(\tilde T^*_\L X)\fr$ corresponding to the Azumaya algebra~$\tilde\D_\Lc$ for $c \in \k \setm \Fp$ is obtained from the above construction applied to $c\a_\L$.

\subsection{The Poincar\'{e} bundle}
Then we construct an explicit kernel of the equivalence (an analogue of the Poincar{\'e} bundle).  This is a line bundle with connection on the fiber product of two copies of $\Loc^\circ$ over the Hitchin base (see formula~\eqref{sfb}). The construction is similar to that of Poincar{\'e} bundle on the square of the Picard stack of a curve:
\[
    \text{Poincar\'e}(\L,\L') = \det \RGam(\L\tsr\L') \tsr (\det \RGam(\L) \tsr \det \RGam(\L'))^{\tsr-1} .
\]

Namely, the determinant bundle on the Picard stack gets replaced by the determinant bundle on $\Loc^\circ$ with ``tautological'' connection (the same one that is used to describe the gerbe on~$(\Loc^\circ)\fr$), while the role of tensor product of line bundles is played by the addition map on the fibers of the $p$-Hitchin map:
\[
    \Loc^\circ_1 \xx_{\rB\fr} \Loc^\circ_c  \longto \Loc^\circ_{1+c}.
\]
Here subscripts indicate scaling of the projection to the Hitchin base.  The fiber of~$\Loc^\circ_c$ classifies splittings on the spectral curve of the gerbe corresponding to the canonical 1-form on $T^*C\fr$ multiplied by~$c$.  This map can then be thought of as ``tensoring over the spectral curve.''

The main difficulty is then to check that this bundle with connection has the correct $p$-curvature.
This reduces to a certain linear equality on the extended curvatures (formula~\eqref{eq:hard tth id}).  This formula can be interpreted as a kind of additivity of the generalized one-form $c^{-1}\tilde\th$ on~$\Loc^\circ_c$ with respect to the addition maps above, where~$\tilde\th$ denotes the extended curvature of the tautological connection on the determinant bundle.

\subsection{The final step}
In~\ref{subs:alt constr}, we construct another generalized one-form $\tilde\th_0$ on $\Loc^\circ$ (actually on the maximal smooth part of~$\Loc$) whose image in~$\Om^2$ coincides with that of~$\tilde\th$ (both are equal to the symplectic form on~$\Loc^\circ$) but whose behavior with respect to the $p$-Hitchin map is more controllable.  We prove the additivity property for it, and then show that $\tilde\th = \tilde\th_0$.  In fact, $\tilde\th_0$ lifts to an actual antiderivative~$\th_0$ of the symplectic form.  Such antiderivatives correspond bijectively to Liouville vector fields.  We construct such a vector field using an equivariant structure of the gerbe on $T^*C\fr$ under the Euler vector field.  Such structures correspond to liftings of~$C$ to the 2nd Witt vectors of~$\k$.  Since $\tilde\th - \tilde\th_0$ is closed, it corresponds by Cartier to a 1-form~$\b_0$ on~$(\Loc^\circ)\fr$ and we have to prove that it is~0.

The definition of~$\Loc_c$ above makes sense for all $c \in \k$; in particular, for $c=0$ it gives $\Loc_0 = \Higgs\fr$.  On $\Higgs\fr$ we have the canonical 1-form $\th_{\Higgs}\fr$ (as on a cotangent bundle).  We prove that both $\tilde\th$ and $\th_0$ are compatible with~$\th_{\Higgs}\fr$ with respect to the action map
\[
    \Loc^\circ_0 \xx_{\rB\fr} \Loc^\circ  \longto  \Loc^\circ .
\]
In the beginning of Section~\ref{sec:proof} we prove this for $\tilde\th$.  It is enough to prove it on the image of the Abel--Jacobi map in~$\Higgs$, which in turn reduces to studying how the determinant bundle (with connection) on~$\Loc^\circ$ changes when we twist the local system by a point of its $p$-spectral curve.  In fact, after passing from generalized one-forms to gerbes, the compatibility of~$\tilde\th$ has an interpretation in terms of the existence of what might be called ``quantum ($p$\nbhp-)Hecke functors'' (see Remark~\ref{rem:q-Hecke}), which behave like usual Hecke functors of classical geometric Langlands, and can be compared to the the category studied in~\cite{FL} which plays the same role for quantum geometric Langlands at rational~$c$ in characteristic~$0$.   The compatibility of~$\th_0$ with~$\th_{\Higgs}\fr$ is proved as part of the additivity for~$\th_0$.  (In fact, the additive family of 1-forms on~$\Loc^\circ_c$ constructed in~\ref{subs:alt constr} specializes to~$\th_0$ for $c=1$ and to~$\th_{\Higgs}\fr$ for $c=0$.)

From this we conclude that $\b_0$ descends to the Hitchin base.  On the other hand, in~\ref{subs:proof Jer} we study the behavior of~$\b_0$ with respect to the projection $\Loc \to \Bun$.  First, by a degree estimate we show that the restriction to the fibers of this projection have constant coefficients.  Then, a global argument shows that in fact this restriction must be~0.  The fibers of the two projections $\Loc \to \rB\fr$ and $\Loc \to \Bun$ are generically transversal (at least, we know how to prove this for one of the components of~$\Loc$ and generic~$C$), which gives the desired equality $\b_0=0$.

\subsection{Further and related research}
Here we briefly indicate some of the directions of future research:
\begin{itemize}
  \item In~\cite{Ar}, D.~Arinkin constructed a version of Fourier--Mukai transform for derived categories of compactified Jacobians of curves with plane singularities in characteristic~$0$.  In~\cite{Gr}, M.~Gr\"ochenig extended this result to characteristic~$p$ (with some assumptions on~$p$) and used this to extend the equivalence of~\cite{BB} to the locus in~$\rB$ parametrizing reduced, {\em but possibly singular}, spectral curves.  It should be possible to do the same for quantum geometric Langlands.  One can also ask whether the restrictions on~$p$ can be relaxed.
  \item It is important in our construction that the ``quantization'' parameter~$c$ is irrational ($c \not\in \Fp$). It is therefore natural to ask whether one can extend this to rational~$c$, including $c=0$ corresponding to the  case of classical geometric Langlands.  The degeneration of the categories involved is similar (or at least related) to the case of degeneration of geometric Langlands to its quasi-classical limit.  This should also reduce the potential ambiguity in the equivalence in Theorem~\ref{main_thm}.
  \item Tsao-Hsien Chen and Xinwen Zhu \cite{CZ-GL,CZ-NA Hodge} generalized the results of~\cite{BB} from $G=\GL_N$ to the case of arbitrary reductive~$G$.  One can try to combine their techniques with those of the present paper to establish quantum geometric Langlands for general~$G$.
  \item Also worth mentioning here is the paper~\cite{N} by T.~Nevins which establishes the ``mirabolic'' version of Bezrukavnikov--Braverman results.
\end{itemize}

\subsection{Acknowledgments}
 The original idea of applying the techniques of~\cite{BB} to the quantum case belongs to my adviser,
Roman Bezrukavnikov.
 I am grateful to him for various ideas, as well as support and patience in the course of working on the problem.
 I would also like to thank
Dima Arinkin,
Alexander Beilinson,
Alexander Braverman,
Dustin Clausen,
Dennis Gaitsgory
 for fruitful discussions, and
Tsao-Hsien Chen,
Michael Finkelberg and
Ivan Mirkovi\'c
 for the interest in our work.

I am indebted to Michael Finkelberg for his courage and support over the years, as well as for finding some typos in this paper.
Thanks to Tsao-Hsien Chen and Michael Gr\"{o}chenig for sharing their papers  \cite{CZ-GL} and \cite{Gr} prior to publication.            I thank Dennis Gaitsgory and the referees for suggestions on improving
the text.

Different parts of this paper were written while I was in Massachusetts Institute of Technology, University of Chicago, and Harvard University as well  as Hebrew University of Jerusalem.
I thank these institutions for  hospitality and support.

Finally, I appreciate the effort of many people who contributed to my successful study, work and life in all of  these  places.

This research was partially conducted during the period the
author was employed by the Clay Mathematics Institute as
a Research Scholar.

%
\todo[may add a more complete table of notation here at some point]

\section{Differential operators in positive characteristic}\label{sec:diff op charp}

\subsection{Frobenius morphisms and twists}
For any scheme $S$ of characteristic $p$ (\ie such that $p\O_S=0$) the absolute Frobenius $\Frob_{S}\colon S\to S$ is defined as $\id_S$ on the topological space and $\Frob_{S}^\#(f)=f^p$ on functions. For any $S$-scheme $X\xra{\pi} S$ one constructs a commutative diagram
\[
\xymatrix{X\ar[r]_{\Fr_{X/S}} \ar[dr]_{\pi} \ar@/^1pc/[rr]^{\Frob_{X}}
            & X\fr[S]\ar[d]|{\pi\fr[S]} \ar[r]_{W_{X/S}} & X\ar[d]^{\pi}\\
            & S\ar[r]^{\Frob_S} & S }
\]
where the square is Cartesian.  We call the $S$-scheme $X\fr[S] \xra{\pi\fr[S]} S$ the {\em relative Frobenius twist} of~$X$ over~$S$, and call $\Fr_{X/S}$ the {\em relative Frobenius morphism}. We will denote by $\bullet\fr[S]$ the pullback by $\Frob_S$ or $W_{X/S}$ (\ie the relative Frobenius twist over~$S$).  E.g., if $\cE$ is a sheaf on~$X$ then $\cE\fr[S]:= W_{X/S}^*\cE$, similarly for a relative differential form~$\omega$ on $X$ over $S$, $\omega\fr[S]:=W_{X/S}^*\omega$.  In the case $S=\Spec\k$ we will drop ``$S$'' and write $\Fr_X$ and $X\fr$ instead of $\Fr_{X/S}$ and $X\fr[S]$.  The $k$'th iterate of $\bullet\fr$ will be denoted $\bullet\fr[k]$.

Note that  $\Fr_{X/S}$ and $W_{X/S}$ induce mutually inverse isomorphisms  of Zariski sites, which we use to identify Zariski sheaves on $X$~and~$X\fr[S]$.  Note that if $S$ is spectrum of a field then $W_{X/S}$ is actually an isomorphism of abstract schemes (but not of schemes over~$S$).

\subsection{\texorpdfstring{$\lambda$}{\textlambda}-connections} \label{lconn}
Recall that a \emph{$\la$-connection} on a vector bundle $E$ on a smooth variety $X$ is a $\k$-linear morphism of sheaves $\tilde\na \colon \cE \to \Om^1 \tsr_\O \cE $ such that
\[
   \forall f\in\O \quad \forall s\in\cE \qquad \tilde\na(fs) = f\cdot\tilde\na s + \la\cdot df\tsr s
\]
where $\cE$ is the sheaf of sections of $E$.

For $\lambda\ne0$ if $\tilde\na$ is a $\la$-connection then $\na = \la^{-1} \tilde\na$ is a connection, and vice versa. In this case, the curvature of a $\la$-connection can be expressed in terms of the ordinary curvature: $F_{\tilde\na} = \la^2 F_\na$. The case $\la=0$ can be thought of as a limit when $\la\to0$. In particular if $\cE$ is trivialized, and $\tilde\na=\la d+\th$ then $F_{\tilde\na} = \la d\th + \th\wedge\th$

For a line bundle $\L$ on $X$ and $\lambda\in\k$, define a torsor $\tilde T^*_{\L^\lambda} X$ over $T^*X$ whose sections are $\lambda$-connections on $\L$.

\begin{rem}
As a variety, $\tilde T^*_{\L^\lambda} X$ is isomorphic to $\tilde T^*_\L X := \tilde T^*_{\L^1} X$ for $\lambda\ne0$ and to $T^*X$ for $\lambda=0$.
\end{rem}


\medskip
We now discuss the notion of curvature and flatness of a $\la$-connection.

Define the \emph{curvature} of a $\la$-connection $\tilde\na$ to be the section $F_{\tilde\na}$ of $\Om^2 \tsr \cEnd\cE$ corresponding to the $\O$-linear map $\tilde\na^2 \colon  \cE \to  \Om^2 \tsr\cE$ where $\tilde\na$ is extended to $\Om^\bullet \tsr\cE$ by the following ``Leibnitz rule'':
\[
   \tilde\na(\omega \tsr s) = (-1)^{\deg\omega}\omega \wedge \tilde\na s + \la \cdot d\omega \tsr s.
\]
An alternative definition of $F_{\tilde\na}$ is that for any $\xi,\eta \in\cT_X$ we must have
\[
   F_{\tilde\na}(\xi,\eta) = [\tilde\na_\xi,\tilde\na_\eta] - \la \cdot \tilde\na_{[\xi.\eta]}.
\]
If $F_{\tilde\na} = 0$, we say that $\tilde\na$ is \emph{flat}.

Vector bundles with flat $\la$-connection correspond to $\O$-flat $\O$-coherent modules over the algebra $\D_\la$ which is the universal enveloping algebra of the Lie algebroid $\cT_{X,\la} = \cT_X$ over $\O$ with rescaled commutator: $[\xi,\eta]_\la = \la[\xi,\eta]$. There is an inclusion $\O_{T^*X\fr} \into Z(\D_\la)$ (the center of $\D_\la$) which is an isomorphism for $\la\ne0$. It is given by $f\fr \mapsto f^p$, $\xi\fr \mapsto \hat\xi^p - \la^{p-1} \hat\xi^{[p]}$ where $\xi$ in the LHS is regarded as a fiberwise linear function on $T^*X$, and $\hat\xi$ in the RHS is the corresponding element in $\D_\la$. For $\la=0$ the inclusion is just the Frobenius map $\Fr^*\colon \O_{T^*X\fr} \to \O_{T^*X}$.

Now suppose we have a flat $\la$-connection~$\tilde\na$ on a vector bundle~$E$.  Then we can then define the \emph{$p$-spectral variety} of~$\tilde\na$ as the support of the corresponding $\D_\la$-module regarded as an $\O_{T^*X\fr}$-module. By the \emph{$p$-curvature map} of~$\tilde\na$ we will mean the map $\curv_p(\na) \colon  \cE \to \cE \tsr \Fr_X^*\Om^1_{X\fr}$ coming from the action of~$\O_{T^*X\fr}$ on~$\cE$.  For $\la\ne0$ it is related to the ordinary $p$-curvature by $\curv_p(\na) = \la^p \curv_p(\la^{-1}\na)$ (here $\la^{-1}\na$ is a usual connection).

\subsection{Twisted differential operator algebras and their centers}
\label{TDO}
If $X$ is a smooth algebraic variety and $\L$ is a line bundle on it, we define a sheaf of algebras $\D_{X,\Lc}$ for any $c\in\k$ as follows.
For any local trivialization $\phi\colon \O_U \isoto\L|_U$ of $\L$ on an open set $U$, we have a canonical isomorphism $\alpha_\phi \colon \D_U \isoto \D_{U,\Lc}$, and if $\phi'$ on $U'$ is another trivialization then the gluing isomorphism $\alpha_{\phi'}^{-1} \circ \alpha_\phi$ is given by
\begin{equation}\label{eq:TDO transition autom}
  \begin{cases}
    f  \mapsto f & \text{for $f\in\O_X$, and} \\
    \xi \mapsto \xi - c\xi(h)/h & \text{for $\xi\in\cT_X$}
  \end{cases}
\end{equation}
where $h \in \O^\times (U\cap U')$ is such that $(\phi')^{-1} \circ \phi \colon \O(U\cap U') \to \O(U\cap U')$ is given by multiplication by $h$.

The algebra $\D_{X,\Lc}$ is called the algebra of \emph{differential operators twisted by $\Lc$}.
This is justified by the fact that if $c\in\Fp$ then for any lift~$\tilde c$ of~$c$ to an integer, the algebra $\D_{X,\Lc}$ acts on the tensor power $\L^{\tsr \tilde c}$ by differential operators (although the action is not exact, just like in the non-twisted case).  In fact, in this case we have an isomorphism of sheaves of algebras $\D_{X,\Lc}\isom\cEnd_{\D_X} (\D_X\tsr_{\O_X}\L^{\tsr\tilde c})^{\on{op}} = \L^{\tsr\tilde c}\tsr_{\O_X} \D_X\tsr_{\O_X} \L^{\tsr-\tilde c}$.  This provides a Morita equivalence between $\D_X$ and $\D_{X,\Lc}$, and under this equivalence, the $\D_X$-module $\O$ corresponds to $\D_{X,\Lc}$-module $\L^{\tsr\tilde c}$.

However, in this paper, we will be interested in the cess $c\not\in\Fp$, where $\Lc$ does not make sense as a line bundle.

Just as the sheaf of usual differential operators, the algebra $\D_{X,\Lc}$ has a filtration with the associated graded isomorphic to (the pushforward of) $\O_{T^*X}$.  In particular, taking the first filtered piece gives an extension of coherent sheaves on~$X$:
\[
    0 \to\O_X \to\D_{X,\Lc}^{\le 1} \to\cT_X \to0.
\]
Moreover, it is not hard to check that the torsor of splittings of this extension is canonically identified with the torsor $\tilde T^*_\L X$ of $c$-connections on~$\L$.

Now let $\tilde\na$ be such a $c$-connection.  Then the obstruction for the corresponding map $l_{\tilde\na} \colon \cT_X \to \D_{X,\Lc}^{\le 1}$ to be a map of Lie algebras is given by the curvature of~$\tilde\na$:
\[
    [l_{\tilde\na}(\xi),l_{\tilde\na}(\eta)] - l_{\tilde\na}([\xi,\eta]) = F_{\tilde\na}(\xi,\eta),
\]
where $F_{\tilde\na} \in\Om^2(X)$ is the curvature of the $c$-connection~$\na$.  Consequently, a structure of a $\D_\Lc$-module on a given quasi-coherent sheaf~$\cE$ is equivalent to a connection~$\na'$ on~$\cE$ with the curvature given by
\[
    F_{\na'} = F_{\tilde\na} \cdot \id_\cE.
\]

\begin{prop} \label{Z(TDO)}
Denote by $Y$ the relative spectrum of the center of $\D_\Lc$: $Y = \Spec_{X\fr} Z(\Fr_{X*} \D_{X,\Lc})$.  Then $Y$ is canonically isomorphic to $\tilde T^*_{\L\fr^{c^p-c}} X\fr$ (as an $X\fr$-scheme).%
    \footnote{To make the notation less cumbersome, we drop the parentheses writing $\L\fr^{c^p-c}$ for $(\L\fr)^{c^p-c}$}
Moreover, $\Fr_{X*} \D_\Lc$ is the pushforward of an Azumaya algebra~$\tilde\D_\Lc$ on~$Y$.

If we are given a trivialization $\phi\colon \O_X \isoto \L$ then the following diagram commutes (where the vertical arrows are induced by $\phi$~and~$\phi\fr$, and the bottom arrow is the standard isomorphism (see e.g., \cite[Lemma~1.3.2]{BMR})):
\[
    \xymatrix{
      Y\ar[r]^-\sim\ar[d]^\wr_\phi & \tilde T^*_{\L\fr^{c^p-c}} X\fr \ar[d]^\wr_{\phi\fr} \\
      \Spec_{X\fr} Z(\Fr_{X*}\D_X) \ar[r]^-\sim & T^*X\fr
    }
\]
\end{prop}

\begin{proof}
  It is enough to construct the identification for the trivial line bundle~$\L$ and then prove that it is independent of the trivialization.  But for the trivial line bundle we already have such an identification (referred to as ``standard'' in the formulation).

  To show the independence of the trivialization, suppose we have an automorphism of the trivial line bundle~$\O_X$ given by an invertible function~$h$.  The corresponding automorphism~$\psi_h$ of~$\D_X$ is given by~\eqref{eq:TDO transition autom}.  Combining with the formulas for the identification of $Z(\D_X)$ with the pushforward of $\O_{T^*X\fr}$:
  \[
      \begin{cases}
        f\fr\mapsto f^p& \text{for $f\in\O_X$;} \\
        \xi\fr\mapsto \xi^p - \xi^{[p]}& \text{for $\xi\in\cT_X$}
      \end{cases}
  \]
  we see that, for any vector field~$\xi$ on (an open subset of) $X$, the action of~$\psi_h$ on the element of $Z(\D_X) \isom \Sym \cT_{X\fr}$ corresponding to~$\xi\fr$ is given by
  \[
      \psi_h(\xi^p - \xi^{[p]})  =  (\xi - c \xi(h) / h)^p  -  (\xi^{[p]} - c \xi^{[p]} (h) / h) .
  \]

  Using the identity (in any associative algebra in characteristic~$p$)
  \[
      (a+b)^p = a^p + b^p + (\on{ad}a)^{p-1}(b) \quad\text{if } [[a,b],b] = 0,
  \]
  the above expression can be rewritten as
  \[
      \psi_h(\xi^p - \xi^{[p]})  =  (\xi^p - \xi^{[p]})  -  c^p (\xi(h) / h)^p  +  c\bigl(\xi^p(h) / h  -  \xi^{p-1}(\xi(h) / h) \bigr).
  \]

  Now note that when $c=1$ we have $\psi_h = \on{Ad}h$.  Therefore, since $\xi^p-\xi^{[p]}$ is central, we must have $\psi_h(\xi^p-\xi^{[p]}) = \xi^p-\xi^{[p]}$ in this case.  Thus, the above equation gives
  \[
      (\xi(h) / h)^p  +  \xi^p(h) / h  -  \xi^{p-1}(\xi(h) / h) = 0,
  \]
  and hence, for arbitrary~$c$, the formula becomes
  \[
      \psi_h(\xi^p - \xi^{[p]})  =  (\xi^p - \xi^{[p]})  -  (c^p-c) (\xi(h) / h)^p .
  \]
  We see that this formula coincides with the action of~$h$ on linear functions on the twisted cotangent $\tilde T^*_{\L\fr^{c^p-c}} X\fr$.
\end{proof}

\begin{rem}
  Another way to finish the argument is as follows.  Let us observe that the effect of~$\psi_h$ on $\D_X$-modules is equivalent to tensoring by the line bundle with connection $(\O,d+c\cdot d\log h)$.  Therefore its effect on the $p$-support is given by the shift by the $p$-curvature of $d+c\cdot d\log h$.  This $p$-curvature equals $(c\cdot d\log h)\fr - \sC(c\cdot d\log h) = (c^p-c)\cdot d\log h\fr$ (where $\sC$ is the Cartier operator), hence we get the desired formula.
\end{rem}

The proposition implies that $\D_{X,\Lc}$ localizes to an Azumaya algebra on $\tilde T^*_{\L\fr^{c^p-c}}X\fr$ which we will denote by $\tilde\D_{X\fr,\Lc}$ or just $\tilde\D_\Lc$.  If $c\not\in\Fp$, we'll use the identification $\tilde T^*_{\L\fr^{c^p-c}}X\fr \isoto \tilde T^*_{\L\fr} X\fr$ and think of $\tilde\D_\Lc$ as an Azumaya algebra on $\tilde T^*_{\L\fr} X\fr$.

\medskip
We finish this subsection with a discussion of ``TDO algebras with Planck's constant''.

Suppose $X,\L$ are as in \ref{TDO}. Define the $\k[\bc,\hb]$-algebra $\DchL(X)$ as follows. Let $\pi\colon  \tilde X \to X$ be the principal $\Gm$-bundle corresponding to $\L$. Denote by $\Dh(\tilde X)$ the algebra of ``differential operators with parameter,'' that is, the algebra
\[
   \Dh(\tilde X) := \bigoplus_{n\ge0} \D^{\le n}(\tilde X)
\]
over $\k[\hb]$ where the inclusion $\k[\hb] \into \Dh(\tilde X)$ is given by $\hb\mapsto 1\in\D^{\le1}(\tilde X)$. Here we introduce its TDO analog. For $\xi\in\cT_{\tilde X}$ let $\hat\xi$ be the corresponding element in $\D^{\le1}\subset\Dh$. Let $\Eu$ be the Euler vector field on $\tilde X$ (the differential of the $\Gm$-action). Now set
\[
   \DchL(X) := (\pi_*\Dh(\tilde X))^\Gm.
\]
This is a $\k[\bc,\hb]$-algebra via $\hb\mapsto\hb\in\Dh(\tilde X)$, $\bc\mapsto\widehat\Eu$. Note also that $\pi_*\Dh (\tilde X)$ has two gradings: one comes from the definition of $\Dh$ as a direct sum, and the other comes from the $\Gm$-action on $\tilde X$. But on the $\Gm$-invariant part, we have only one grading (the first one), and with respect to this grading $\deg \bc = \deg\hb = 1$. The algebra $\DchL$ being graded implies that it carries an action of $\Gm$ and, in particular, if $\DchL[c,\hb_0]$ denotes the specialization $\bc\mapsto c,\ \hb \mapsto \hb_0$ of the algebra $\DchL$ (where $c,\hb_0 \in\k$), \ie the algebra of ``$(c\cdot c_1(\L))$-twisted $\hb$-differential operators'', then
\begin{equation} \label{DchL scaling}
 \DchL[c,\hb_0] \isom \DchL[\la c,\la\hb_0]
 \end{equation}
for any $\la\in\k^{\times}$.

The specialization $\bc \mapsto 0$ gives the algebra $\Dh$ defined above, and in particular, $\DchL[0,0]=(\pr_X)_*\O_{T^*X}$ where $\pr_X\colon T^*X\to X$. Furthermore, it is not hard to show that $\DchL[c,0] = (\pr'_X)_* \O_{\tilde T^*_\Lc X}$ (where $\pr'_X$ is again the appropriate projection to $X$). One can also check that specialization $\hb\mapsto1$ recovers the algebra $\D_\Lc$ from~\ref{TDO}. Taking the isomorphism~\eqref{DchL scaling} into account, we can summarize:
\[
 \DchL[c,\hb_0] \isom
 \begin{cases}
    \pr_*\O_{T^*X} & \text{if $c=\hb_0=0$;} \\
    \pr'_*\O_{\tilde T\*_{\L} X} & \text{if $c\ne0,\ \hb_0=0$;} \\
    \D_{\L^{c/\hb_0}} & \text{if $\hb_0\ne0$.}
 \end{cases}
\]

The following proposition is a generalization of Proposition~\ref{Z(TDO)}:
\nopagebreak\medskip\mynobreakpar
\begin{prop*}[\ref{Z(TDO)}$'$]   \label{Z(DchL)} \ \nopagebreak\smallskip\mynobreakpar
\begin{enumerate}
  \item  The center of the algebra $\DchL(X)$ is canonically isomorphic to $\O_{\rZ}$ where $\rZ$ is a scheme over $\k[\bc,\hb]$ whose fiber over a (closed) point $(c,\hb_0)\in\Spec\k[\bc,\hb]$ is isomorphic to\footnote{
            It is clear that these twisted cotangent bundles organize into a family over $\Spec\k[c,\hb]$.  More precisely $\rZ$ should be defines by the appropriate relative (in-families) version of the construction $\tilde T^*_\Lc X$.}
            \[
                \rZ\xx_{\Spec\k[\bc,\hb]}\{(c,\hb_0)\} \:\isom\:\tilde T^*_{\L\fr^{c^p-c\hb_0^{p-1}} } X\fr.
            \]
  \item  Moreover, if $c,\hb_0 \in\k$, the specialization $\bc\mapsto c,\ \hb\mapsto\hb_0$ induces a map $\O_{\rZ_{c,\hb_0}} \to Z(\DchL[c,\hb_0])$ which is an isomorphism if and only if $\hb_0\ne0$, in which case $\DchL[c,\hb_0]$ is an Azumaya algebra over $\rZ_{c,\hb_0}$.
  \item  The isomorphism~\eqref{DchL scaling} is compatible with the $\Gm$-action on $\rZ$ given by scaling connections by $\la^p$.
\end{enumerate}
\end{prop*}

\subsection{Central reductions}\label{subs:c.red}
Suppose $X,\L$ are as above, $c\in\k$. Then any $(c^p-c)$-connection $\tilde\na_0$ on $\L\fr$ gives a section of the bundle $\tilde T^*_{\L\fr^{c^p-c}} X\fr$ over $X\fr$ defined above. Denote by $\D_{\Lc,\tilde\na_0}$ the pullback of $\tilde\D_\Lc$ to this section. It is an Azumaya algebra on $X\fr$. If $c\not\in\Fp$ then $(c^p-c)$-connections on~$\L\fr$ correspond bijectively to ordinary connections on~$\L\fr$ by multiplication by $c^p-c$, and, by a slight abuse of notation, we will sometimes denote the algebra $\D_{\Lc,\tilde\na_0}$ by $\D_{\Lc,\na_0}$ where $\na_0$ is the connection on~$\L\fr$ for which $\tilde\na_0 = (c^p-c)\na_0$. One can check that in this case modules over $\D_{\Lc,\tilde\na_0}$ correspond to pairs $(\cE,\na)$ where $\cE$ is a quasi-coherent sheaf on $X$ and $\na$ is a connection on $\cE$ such that
\begin{align*}
F_\na &= c \cdot F_{\na_0'}; \\
\na_\xi^p - \na_{\xi^{[p]}} &= c\cdot \bigl( (\na_0')_\xi^p - (\na_0')_{\xi^{[p]}} \bigr)
\end{align*}
for any $\xi\in\cT_X$, where $\na_0'$ is a (usual) connection on $\L$ such that $\na_0 = \na_0'^{(1)}$. (The operators in the RHS are always multiplication by a function\footnote{which is a pullback from $X\fr$} (resp.\ a two-form), and we want the LHS to be multiplication by the same function (resp.\  two-form), though on a different bundle.)  We will give another interpretation of these conditions in~\ref{extcrv}.

%
Now assume that $c \not\in \Fp$.
Let $\tilde T^*_\L X$ be the twisted cotangent bundle associated to~$\L$.
The pullback~$\L'$ of~$\L$ to~$\tilde T^*_\L X$ 
has a canonical ``universal'' 
connection which we will denote by~$\na_{\on{can}}^{\L}$.

\begin{prop}\label{prop:Morita eqv}
  There is a canonical Morita equivalence of Azumaya algebras on~$(\tilde T^*_\L X)\fr$:
  \[
      \tilde\D_\Lc \sim \D_{\L'^c,(\na_{\on{can}}^{\L})\fr}.
  \]
\end{prop}

\begin{proof}
  First we will construct a functor $\tilde\D_\Lc\hmod \to \D_{\L'^c,(\na_{\on{can}}^{\L})\fr}\hmod$ and then explain why it is compatible with the isomorphism of $p$-centers and is an  equivalence.  Let $\pi$ denote the projection $\tilde T^*_\L X \to X$.  We define the desired functor as the composition of the following functors:
  \[
      \tilde\D_\Lc\hmod \isoto \D_\Lc\hmod \xra{\pi^!} \D_{\L'^c}\hmod \to \D_{\L'^c,\na_{\on{can}}^{\L}}\hmod.
  \]
  Here $\pi^!$ is the usual pullback for twisted $\D$-modules (given by the $\O$-module pullback of the underlying quasi-coherent sheaves), and the last functor is given by induction (\ie central reduction).

  In  order to check that this functor is $\O_{(\tilde T^*_\L X)\fr}$-linear and is an equivalence, it is enough to consider the case when $\L$ is trivial, which reduces to the analogous statement for non-twisted $\D$-modules.  This was proved in~\cite{BB}.
\end{proof}


\subsection{TDOs on stacks}\label{subs:TDO stk}
\ncmd{\Ysm}{\rY_{\on{sm}}}
In this section we are going to generalize the above results to the case of stacks.  So let $\rY$ be a smooth pure-dimensional Artin stack over~$\k$, $\L$ a line bundle on~$\rY$, and $c \in \k$.  The $\D$-modules and twisted $\D$-modules on stacks are discussed in \cite[Sect.~1.1]{BD} for the characteristic~0 case.  For the non-twisted case in characteristic~$p$, see \cite[Sect.~3.13]{BB}.

Recall that a quasi-coherent sheaf, resp.\ $\D$-module, on~$\rY$ is defined as a datum, for every smooth morphism $S\to\rY$ from a scheme~$S$, of a quasi-coherent sheaf, resp.\ $\D$-module, $\cF_S$ on~$S$, together with identifications $f^*_\O\cF_S \isoto \cF_{S'}$, resp.\ $f^!_\D\cF_S \isoto \cF_{S'}$ ($f^!_\D$ is the $\D$-module !-pullback which is the $*$-pullback $f^*_\O$ on the underlying $\O$-modules) for every morphism $f\colon S' \to S$ of smooth schemes over~$\rY$ (\ie $S$ runs over the smooth site~$\Ysm$ of~$\rY$).  These identifications are required to satisfy the cocycle condition for compositions.  Similarly, one can define the category of $\Lc$-twisted $\D$-modules on~$\rY$.  In this subsection we will construct a certain $\Gm$-gerbe $\dd_{\rY,\Lc}$ on $\tilde T^*_{\L\fr^{c^p-c}}\rY\fr$ such that modules over it are equivalent to $\Lc$-twisted $\D$-modules on~$\rY$.\footnote{see Remark~\ref{rem:strengthen BB} below}  For the sake of brevity of notation, we'll denote the latter twisted cotangent by~$\tilde T^*\rY\fr$, and for $(S,\pi) \in \Ysm$, denote by~$\tilde T^*S\fr$ the twisted cotangent bundle for $(\pi^*\L)\fr^{c^p-c}$.

\ncmd\tysm{(\tilde  T^* \rY\fr)_{\on{sm}}}

First of all, one can show that, in order to define a $\Gm$-gerbe on~$\tilde T^*\rY\fr$, it's enough to provide a $\Gm$-gerbe on $S\fr \xx_{\rY\fr} \tilde T^*\rY\fr$ for every $S\in\Ysm$ (together with compatibility isomorphisms).  Indeed, a~priori we have to define a $\Gm$-gerbe on any $T\in\tysm$.  But let us show that the $\Gm$-gerbes on other $T\in\tysm$ are recovered uniquely (up to a unique equivalence of  gerbes).  Indeed, fix a smooth atlas $U\to\rY$. Then, for any  $T\in \tysm$, we construct a gerbe on $U\fr\xx_{\rY\fr} T$ by descent from the \v{C}ech nerve of $U\fr\xx_{\rY \fr} T\to T$. We use the notation
\[
    \rX^n_\rZ:=\underbrace{\rX\x_\rZ \dots \x_\rZ\rX}_n
\]for a map of stacks  $\rX\to\rZ$.   Then, for any $n$, we have $U^n_\rY\in\Ysm$, so we already have a gerbe on $ (U\fr\x_{\rY\fr} {\tilde T^*\rY\fr})^n_{\tilde T^*\rY\fr} = (U^n_\rY)\fr \x_{\rY\fr} {\tilde T^*\rY\fr}$, and hence by pullback on $ (U\fr\x_{\rY\fr} T)^n_T = (U^n_\rY)\fr \x_{\rY\fr} T$.  Since $\Gm$-gerbes descend along \v{C}ech nerves of smooth morphisms, we thus get a gerbe on $T$.  Compatibilities for maps between $T$'s are easy from the construction.


Now we have a map $S\fr \xx_{\rY\fr} \tilde T^*\rY\fr \to \tilde T^*S\fr$, and we have a gerbe $\dd_{S,\pi^*\Lc}$ on~$\tilde T^*S\fr$ corresponding to the Azumaya algebra~$\tilde\D_{S,\pi^*\Lc}$, so we can let $(\dd_{\rY,\Lc})_{S\fr}$ be the pullback of~$\dd_{S,\pi^*\Lc}$ under this map.  These gerbes are compatible with pullbacks: the equivalence is given by the TDO analog of the $\D$-module $\D_{S' \to S}$ from~\cite{BB}.  The compatibility of these equivalences with compositions follows from the isomorphism $\D_{S'' \to S'} \star \D_{S' \to S} \isom \D_{S'' \to S}$ for a pair of morphisms $S'' \xra{g} S' \xra f S$, which in turn follows from the isomorphism $(fg)^! = g^!f^!$ for $\D$-module pullbacks, and similarly for the cocycle condition.

For a $\Gm$-gerbe~$\g$, we denote $\g\hmod$ the category of ``$\g$-twisted quasi-coherent sheaves,'' \ie the 1st component of the category of quasi-coherent sheaves on the ``total space'' of~$\g$.

\begin{thm}\label{Dmod-stk equiv gerbe}
  There is a natural equivalence of categories
  \[
      \D\hmod_\Lc(\rY)  \isoto  \dd_{\rY,\Lc}\hmod.
  \]
\end{thm}

\begin{proof}
  We  first construct a functor in one direction.  Namely, given an object $\tilde\cM \in \dd_{\rY,\Lc}\hmod$, we can construct an $\Lc$-twisted $\D$-module $\cM$ on~$\rY$ as follows.  Given any $(S,\pi) \in \Ysm$, we have maps $(\wtld{d\pi})\fr \colon S\fr \xx_{\rY\fr} \tilde T^*\rY\fr  \to  \tilde T^*S\fr$ and $\tilde \pi\fr\colon S\fr \xx_{\rY\fr} \tilde T^*\rY\fr  \to  \tilde T^*\rY\fr$.  So we can consider the $\dd_{S,\pi^*\Lc}$-module given by $(\wtld{d\pi})\fr_* \tilde \pi\fr^* \tilde\cM$; this makes sense because by definition we have the canonical equivalence $(\wtld{d\pi})\fr^* \dd_{S,\pi^*\Lc}  \isom  \tilde \pi\fr^* \dd_{\rY,\Lc}$.  We let $\cM_S$ be the $\D_{S,\pi^*\Lc}$-module corresponding to this $\dd_{S,\pi^*\Lc}$-module.  It is clear that for a map $f\colon S' \to S$ in~$\Ysm$ we have an isomorphism $\cM_{S'} \isom f^!\cM_S$ and these isomorphisms are compatible with compositions, so the modules~$\cM_S$ for every $S\in\Ysm$ define an $\Lc$-twisted $\D$-module $\cM$ on~$\rY$.  Moreover the assignment $\tilde\cM \mapsto \cM$ gives a functor $\dd_{\rY,\Lc}\hmod  \to  \D\hmod_\Lc(\rY)$, which we (temporarily for this proof) denote by~$\Phi$.

  Note that for any $(S,\pi)\in\Ysm$, since $\pi$ is smooth by assumption, the map $\wtld{d\pi}$ (and hence $(\wtld{d\pi})\fr$) is a closed embedding.  Therefore the functor $(\wtld{d\pi})\fr_*$ is fully faithful.  Being by definition a limit of such functors, our functor~$\Phi$ is thus fully faithful as well.  It remains to show that it is essentially surjective.  Fix an object $\cM \in \D\hmod_\Lc(\rY)$.  We see that $\cM$ is in the essential image of~$\Phi$ if and only if, for every $(S,\pi) \in \Ysm$, the $\D_{S,\pi^*\Lc}$-module $\pi^!\cM = \cM_S$ has $p$-support inside $S\fr \xx_{\rY\fr} \tilde T^*\rY\fr \ctd \tilde T^*S\fr$.

  To see that the $p$-support of~$\cM_S$ is indeed contained in this closed subscheme, consider the fiber square $Q = S \xx_\rY S$ and denote by $\pr_1,\pr_2\colon Q \toto S$ the two projections.  We have an isomorphism $\pr_1^!\cM_S \isom \cM_Q \isom \pr_2^!\cM_S$ of twisted $\D$-modules on~$Q$.  But note that the $p$-support of~$\pr_i^!\cM_S$ is inside $Q\fr\xx_{\pr_i\fr,S\fr}\tilde T^*S\fr$, and the above isomorphism forces this $p$-support to lie in the intersection of these two closed subschemes inside $\tilde T^*Q\fr$.  This intersection coincides with $Q\fr \xx_{\rY\fr} \tilde T^*\rY\fr$.  Since $\pr_i$ are surjective, the equality $\supp_p \pr_i^!\cM_S = Q\fr \xx_{S\fr} \supp_p \cM_S$ implies that $\supp_p\cM_S \ctd S\fr \xx_{\rY\fr} \tilde T^*\rY\fr$ as desired.
\end{proof}

\begin{rem}\label{rem:strengthen BB}
  In~\cite{BB}, the authors construct an coherent sheaf of algebras on the stack~$T^*\rY$ which is an Azumaya algebra on the \emph{Deligne--Mumford part} of $T^*\rY$ and think of $\D$-modules as modules over that algebra.  But if one is concerned only with the \emph{categories}, not the algebras, no such restriction is needed: although the Azumaya algebra (as such) does not extend from the Deligne--Mumford part, the corresponding gerbe does, and describes the category of $\D$-modules on~$\rY$ as defined in this subsection.
\end{rem}

Now we will prove an analog of Proposition~\ref{prop:Morita eqv} for the stack case.  For a smooth stack~$\rY$, a line bundle~$\L$ on~$\rY$, a scalar $c \in \k\setm\Fp$, and a connection~$\na$ on~$\L\fr$, define a $\Gm$-gerbe $\dd_{\rY,\Lc,\na}$ on~$\rY\fr$ as follows.  By definition, the connection~$\na$ defines a section~$s_\na$ of the projection $(\tilde T^*_\L\rY)\fr \to \rY\fr$.  Let $\dd_{\rY,\Lc,\na}$ be the pullback of~$\dd_{\rY,\Lc}$ by this section.

\begin{prop}\label{prop:Morita eqv stk}
  For $\rY$, $\L$ and $c$ as above, we have a canonical equivalence of gerbes over $((\tilde T^*_\L\rY)\fr)\sm$:
  \[
      \dd_{\rY,\Lc} |_{((\tilde T^*_\L\rY)\fr)\sm}  \isoto  \dd_{(\tilde T^*_\L\rY)\sm,\L'^c,\na_{\on{can}}\fr}
  \]
  where $(\L',\na_{\on{can}})$ is the pullback of~$\L$ to~$(\tilde T^*_\L\rY)\sm$ equipped with the canonical connection.
\end{prop}

The proposition will follow from the following lemma:

\begin{lem}\label{lem:eqvce on crspce stk}
  Let $f\colon \rY \to \rZ$ be a map of smooth stacks and $\L$ a line bundle on~$\rZ$.  Let $\tilde f\fr\colon \rY\fr \xx_{\rZ\fr} \tilde T^*_{\L\fr} \rZ\fr \to \tilde T^*_{\L\fr} \rZ\fr$ and $\wtld{df}{}\fr\colon \rY\fr \xx_{\rZ\fr} \tilde T^*_{\L\fr} \rZ\fr \to \tilde T^*_{f^*\L\fr} \rY\fr$ be the natural projections.  Then we have an equivalence
  \[
      \tilde f\fr^* \dd_{\rZ,\Lc} \isoto \wtld{df}{}\fr^* \dd_{\rY,f^*\Lc}.
  \]
\end{lem}

\begin{proof}
  We have already mentioned this fact in the case when $\rY$ and~$\rZ$ are smooth varieties.  In this case, the statement follows from the fact that there is a splitting of $\tilde\D_{\rY,f^*\Lc} \bx \tilde\D_{\rZ,\Lc}^{\on{op}}$ given by $\D_{\rY\to\rZ,\Lc}$ (see \cite[Proposition~3.7]{BB} for the non-twisted case; the twisted case is completely analogous).

  The stack case can easily be obtained by descent.
\end{proof}

\begin{proof}[Proof of Proposition~\ref{prop:Morita eqv stk}]
  We apply Lemma~\ref{lem:eqvce on crspce stk} to the map $f\colon (T^*_\L\rY)\sm \to\rY$ and the line bundle~$\L$.  Consider the diagonal map $\Delta\colon (\tilde T^*_\L\rY)\sm \to (\tilde T^*_\L\rY)\sm \xx_\rY \tilde T^*_\L\rY$.  It is easy to see that the map $\tilde f \circ\Delta$ is the inclusion $(\tilde T^*_\L\rY)\sm \into \tilde T^*_\L\rY$ and the map $\wtld{df}\circ\Delta\colon (\tilde T^*_\L\rY)\sm \to \tilde T^*_{\L'} (\tilde T^*_\L\rY)\sm$ corresponds to the connection~$\na_{\on{can}}$ on~$\L'$.  Thus, pulling back the equivalence in Lemma~\ref{lem:eqvce on crspce stk} by the map~$\Delta$, we get the desired statement.
\end{proof}

\medskip    To compare with the treatment of $\D$-modules on stacks in \cite{BB},
we now discuss what might be called the ``free  rank one'', or ``regular'' twisted $\D$-modules on stacks.

It is clear that the forgetful functor $\D\hmod_\Lc(\rY) \to \on{QCoh}(\rY)$ has a left adjoint.  Denote by $\Ds$ the image of~$\O_\rY$ under this left adjoint; this is a twisted $\D$-module (in particular, a quasi-coherent sheaf) on~$\rY$.  In other words, this is the object (co)representing the functor of global sections $\Gamma\colon \D\hmod_\Lc(\rY) \to \on{QCoh}(\rY) \to \k\text{-Vect}$.  Let also $\Df$ be the opposite to the sheaf (on $\Ysm$) of endomorphism algebras of $\Ds$ as a twisted $\D$-module, \ie $(\Df)_S = \cEnd_{\D_{S,\pi^*\Lc}} (\Ds)_S$.  Since $\O_{S\fr} \ctd Z(\D_{S,\pi^*\Lc})$ for every $(S,\pi) \in \Ysm$, we can regard $\Df$ as an $\O_{\rY\fr}$-module.  One can check that for any map $f\colon S'\to S$ in~$\Ysm$, we have $(\Df)_{S'} = f\fr^* (\Df)_S$, so $\Df$ is a quasi-coherent sheaf on~$\rY\fr$.

To describe these sheaves in more concrete terms, fix $(S,\pi) \in \Ysm$ and consider the left ideal~$\cI$ in $\D_{S,\pi^*\Lc}$ generated by the image of the map $\cT_{S/\rY} \to \D_{S,\pi^*\Lc}^{\le1}$ (this is the canonical lift of the map $\cT_{S/\rY} \to \cT_S$ which comes from the fact that the vector fields on~$S$ coming from $\cT_{S/\rY}$ have canonical lifting to the total space of $\pi^*\L$), and let $N(\cI)$ be its normalizer.  Then $(\Ds)_S = \D_{S,\pi^*\Lc} / \cI$ and $(\Df)_S = N(\cI) / \cI$.

\begin{rem}
  In the non-twisted case, the sheaves $\D^\sharp_{\rY}$ and $\D^\flat_{\rY}$ are what is denoted by $\D_\cY$ and $D_\cY$ respectively in~\cite{BD} and by $D_Y^\sharp$ and $D_Y$ respectively in~\cite{BB}.  In~\cite{BB}, the authors state that the definition of~$\D^\flat_\rY$ should be modified in characteristic~$p$ because they want $(\D^\flat_\rY)_{S'}$ to be the Zariski (or \'etale) sheaf-theoretic inverse image of $(\D^\flat_\rY)_S$ for any morphism $S' \to S$ in $\Ysm$, as it happens in the characteristic~$0$ case.  Although this property doesn't hold in characteristic~$p$, the fact that we have a quasi-coherent sheaf on~$\rY\fr$ can be regarded as a substitute.
\end{rem}

Recall that by definition, $\Df$ is the opposite of the endomorphism sheaf of $\Ds$, so $\Ds$ is a right module over $\Df$.  We also have a canonical ``unit'' global section of $\Ds$.  Thus there is a map $u\colon \Df \to \Ds$ of sheaves on~$\Ysm$.

\begin{clm}
  The map~$u$ induces an identification $\Df \isoto \Fr_{\rY*}\Ds$.  That is, for $(S\xra\pi\rY) \in \Ysm$, there is an isomorphism $\Gamma(S,(\Df)_S) \isoto \Gamma(S\fr\xx_{\rY\fr}\rY,(\pi\fr\xx_{\rY\fr}\rY)^*\Ds)$ induced by $u$.  (All functors are $\O$-module functors.)
\end{clm}

Note that for any $(S,\pi) \in \Ysm$ the quasi-coherent sheaves $(\Ds)_S$ and $(\Df)_S$ on~$S\fr$ are push-forwards from $\tilde T^*S\fr$, since they are acted on by the center of $\D_{S,\pi^*\Lc}$.  Denote by $(\tDs)_S$ and $(\tDf)_S$ the corresponding sheaves on $\tilde T^*S\fr$.  As we saw in the proof of Theorem~\ref{Dmod-stk equiv gerbe}, these sheaves are actually supported on $S\fr \xx_{\rY\fr} \tilde T^*S\fr$.  Moreover, the $\O_S$-module structure on $(\Ds)_S$ allows us to view $(\tDs)_S$ as a quasi-coherent sheaf on $S \xx_{\rY\fr} \tilde T^*\rY\fr$.  Thus we see that the collection of $(\tDs)_S$ for all $S\in\Ysm$ defines a quasi-coherent (actually, coherent) sheaf $\tDs$ on $\rY \xx_{\rY\fr} \tilde T^*\rY\fr$, whereas $(\tDf)_S$ define a coherent sheaf $\tDf$ on~$\rY\fr$.  It is clear from the above claim that $\tDs$ is the pushforward of $\tDs'$ along the map $\rY \xx_{\rY\fr} \tilde T^*\rY\fr \to \tilde T^*\rY\fr$.

\begin{prop}\label{prop:good => Azumaya}
  Assume that $\rY$ is ``good'' in the sense of \cite[Sect.~1.1.1]{BD}, \ie its cotangent stack has the expected dimension: $\dim T^*\rY = 2\dim\rY$.  Denote by $(\tilde T^*\rY\fr)\sm$ the maximal smooth open substack of~$\tilde T^*\rY\fr$,\footnote{It is easy to see that $(\tilde T^*\rY\fr)\sm$ is a smooth Deligne--Mumford stack.} and by~$\cF$ the coherent sheaf
   on $\dd_{\rY,\Lc}$ corresponding to $\Ds$.   Then
  \begin{enumerate}
  \renewcommand{\theenumi}{\alph{enumi}}
    \item\label{good => Azumaya Ds} The restriction of $\cF$ to $(\tilde T^*\rY\fr)\sm$ is locally free of rank $p^{\dim\rY}$.
    \item\label{good => Azumaya tDf} The restriction of the algebra $\tDf$ to $(\tilde T^*\rY\fr)\sm$ is an Azumaya algebra of rank $p^{2\dim\rY}$.
    \item\label{good => Azumaya G} The gerbe $\dd_{\rY,\Lc}|_{(\tilde T^*\rY\fr)\sm}$ classifies splittings of $\tDf|_{(\tilde T^*\rY\fr)\sm}$.
  \end{enumerate}
\end{prop}

\begin{proof}
  We first note that (\ref{good => Azumaya tDf})~and~(\ref{good => Azumaya G}) are direct consequences of (\ref{good => Azumaya Ds}).  Indeed, (\ref{good => Azumaya tDf}) follows since $\tDf$ is the endomorphism sheaf of~$\cF$, and the identification in~(\ref{good => Azumaya G}) is induced by $\cF$.

  Now, to prove (\ref{good => Azumaya Ds}), it is enough to show that for any $(S,\pi)\in\Ysm$ the sheaf~$\cF_S$ on $S\fr \xx_{\rY\fr} \dd_{\rY,\Lc}$ given by pullback of~$\cF$ is locally free on $S\fr \xx_{\rY\fr} (\tilde T^*\rY\fr)\sm \ctd S\fr \xx_{\rY\fr} \tilde T^*\rY\fr$ of rank $p^{\dim\rY}$.  We will achieve that by showing that the twisted $\D$-module $(\Ds)_S$ corresponding to~$\cF_S$ is Cohen--Macaulay of depth $k := \dim_\rY S = \dim S - \dim\rY$.  (We assume that this dimension is constant along~$S$, for example, it is true if $S$ is connected.)

  Denote $\D_S' = \D_{S,\pi^*\Lc}$ for short.  We can express $(\Ds)_S$ as the homology in the last term (which we put in degree~$0$) of the complex
  \begin{equation}\label{eq:Koszul}
      \D_S' \tsr_{\O_S} \Lambda^k\cT_{S/\rY} \to \D_S' \tsr_{\O_S} \Lambda^{k-1}\cT_{S/\rY} \to\dots\to \D_S' \tsr_{\O_S} \cT_{S/\rY} \to \D_S'
  \end{equation}
  where the rightmost map is constructed as in the first paragraph in this subsection, and the other maps are obtained from it by the Leibnitz rule.  There is a natural filtration by degree on this complex, and the associated graded is isomorphic to the Koszul complex for $\Sym\cT_S \Ltsr_{\Sym\cT_{S/\rY}} \O_S$.  The goodness condition on~$\rY$ implies that this Koszul complex has cohomology only in degree~$0$.  (This complex can be thought of as functions on $S \xx_\rY T^*\rY$ where $T^*\rY$ is understood in the derived sense, and the goodness condition guarantees that $T^*\rY$ is actually non-derived.)  Therefore the same is true for the complex~\eqref{eq:Koszul}.  So \eqref{eq:Koszul} is a locally free resolution of $(\Ds)_S$.

  Now, it is easy to see that applying Verdier duality to~\eqref{eq:Koszul} gives a complex with the associated graded given by the same Koszul complex up to a twist by $\pi^*\omega_\rY[\dim\rY]$, hence this dual complex also has cohomology in one degree---namely in degree~$k$.  Since the Verdier duality for $\D$-modules agrees with the Serre duality for modules over the gerbe~$\dd_{S,\pi^*\Lc}$ up to a twist by line bundle and the identification of $\tilde T^*_{\pi^*(\L\fr)^{c^p-c}}S\fr$ with $\tilde T^*_{\pi^*(\L\fr)^{-(c^p-c)}}S\fr$ via multiplication by~$-1$, we see that $(\Ds)_S$ is Cohen--Macaulay of depth~$k$, as desired.

  To see that the generic rank of~$\cF_S$ is equal to~$p^{\dim\rY}$, we have to show that the generic rank of $(\tDs)_S$ as an $\O_{\tilde T^*S\fr}$-module along its support is $p^{\dim\rY} \cdot \sqrt{\rk\tilde\D_{S,\pi^*\Lc}} = p^{\dim\rY+\dim S}$.  But this can be checked on the level of the associated graded module, which reduces to the fact that the pushforward of $\O_{S \xx_\rY T^*\rY}$ under the Frobenius map has that generic rank, which is true since $\dim (S \xx_\rY T^*\rY) = \dim\rY + \dim S$ by goodness assumption.
\end{proof}

\subsection{Extended curvature}\label{extcrv}
Let $X$ be a smooth variety. Define a (coherent) sheaf $\eF_X$ of $\O_X^p$-modules (= coherent sheaf on $X\fr$) by the exact sequence
\begin{equation}  \label{F_X}
0\to\O_{X\fr} \xra{\Fr^\#_X} \O_X\xra{d}\Om_X^1 \xra{\delta} \eF_X \to 0\,.
\end{equation}
Here $\Fr^\#_X$  is the  morphism $\O_{X\fr} \to \Fr_{X*}\O_X$  induced by the morphism of schemes $\Fr_X\colon  X\to X\fr$.
  (Recall that since $\Fr_X$ induces homeomorphism in the Zariski topology, we identify sheaves on $X$ and on  $X\fr$, and so write $\O_X$ instead of $\Fr_{X*}\O_X$.)
 Sections of $\eF_X$ will be called
   \emph{generalized one-forms} on~$X$.
   Now, from~\eqref{F_X}, we also get an exact sequence
\begin{equation} \label{PQ exseq}
0\to \Om_{X\fr}^1 \xra{\sP} \eF_X\xra{\sQ} \Om_{X,\mathit{cl}}^2 \xra{\sC} \Om_{X\fr}^2 \to0
\end{equation}
where $\sP$ is induced by (the inverse of) Cartier isomorphism $\Coker(\O_X \xra{d} \Om_{X,\mathit{cl}}^1) \isom \Om_{X\fr}^1$,
$\sQ$ is induced by $d\colon \Om_X^1\to \Om_X^2$ and $\sC$ is the Cartier operation. It is immediate from the definition that
\begin{equation} \label{delta = PC on cl}
\delta(\omega) = \sP(\sC(\omega))  \quad\text{for $\omega\in\Om_{X,\mathit{cl}}^1$.}
\end{equation}
If we define $\kappa\colon \Om_X^1\to \eF_X$ by setting
\begin{equation}\label{kappa def}
    \kappa(\omega)=\sP(\omega\fr)-\delta(\omega)
\end{equation}
then we will have an exact sequence
\begin{equation} \label{kappa_exs}
0\to (\O_X^\x)^p \to \O_X^\x \xra{d\log} \Om_X^1 \xra{\kappa} \eF_X \to0\,.
\end{equation}
(Unlike \eqref{F_X} and \eqref{PQ exseq}, \eqref{kappa_exs} is not $\k$-linear.)

Now, if $\Lna$ is a line bundle with connection, we define its \emph{extended curvature} $\tcurv\Lna\in\Gamma(X,\eF_X)$ to be locally given by
\begin{equation*}
\tcurv\Lna = \kappa(\omega)
\end{equation*}
if $\Lna \isom (\O,d+\omega)$. It is clear from~\eqref{kappa_exs} that this is independent of the trivialization, and that $\sQ(\tcurv\Lna) = F_\na$~--- the usual curvature of $\na$.

We also see from~\eqref{kappa_exs} that line bundles with connection having given extended curvature $\a\in\eF_X$ correspond to splittings of a $\Gm$-gerbe $\dd_\a = \dd_{X,\a}$ on~$X\fr$.
For $\om'\in\Om^1(X\fr)$ we define $\dd_{\om'}=\dd_{\sP(\om')}$ so that splittings of
$\dd_{\om'}$ correspond to line bundles with \emph{flat} connections of $p$-curvature $\om'$.  Note that \eqref{kappa def} implies that $\dd_{\om\fr}\isom\dd_{\sP(\om)}$ for $\om\in\Om^1(X)$.

For later reference, note also that all the constructions discussed above are compatible with pullbacks. Namely, if $f\colon X\to Y$ is a map of smooth schemes then we have a canonical morphism $ f\fr^*\eF_Y\to\eF_X$ of coherent sheaves on ~$X\fr$ induced by the corresponding map on $1$-forms. Given a section $\a\in\eF_Y$, we denote by $f^*\a$ its image under this map. Then, for a connection $\na$ on a line bundle ~$\L$ on ~$Y$, we have $\tcurv (f^*\L,f^*\na)=f^*\tcurv\Lna$. We  also have an identification of $\Gm$-gerbes $f^*\dd_\a \isoto \dd_{f^*\a}$
and for a splitting of $\dd_\a$ given by $\Lna $ its pullback to $f^*\dd_\a$ corresponds to $(f^*\L,f^*\na)$.

One can also summarize the above as follows: consider the category $\Smsch$ of smooth schemes over ~$\k$ with morphisms being arbitrary morphisms, equipped with the Grothendieck topology, where coverings are smooth faithful morphisms.  Then there is a sheaf (stack) $B\Gmna$ of groupoids on $\Smsch$  whose value on a scheme $S$ is the groupoid of line bundles with connection fitting into a fiber sequence
\[
    B\Gm\xra{\Fr^*}
    B\Gmna\xra{\tcurv}
    \eF.
\]
The first map in this sequence is pullback by Frobenius, the second one is extended curvature, and the ``connecting homomorphism'' (of sheaves of $2$-groupoids) $B\Gmna\to\Fr_X^*B^2\Gm$ is given by $\a\mapsto\dd_\a$.
(Note that it is more natural to consider the leftmost term $B\Gm$ in this sequence as sending a test scheme $U$ to the groupoid of line bundles on $U\fr$ rather than on $U$: in the case of \emph{relative} connections on $X$ over~$S$ for a smooth morphism $X\to S$ the relevant sheaf would be sending $U$ to the groupoid of line bundles on the relative Frobenius twist $U\fr[S]=U\x_{S,\Frob_S}S$.)

\begin{prop}
Suppose given a line bundle~$\L$, a connection~$\na$ on it, and $c \in \k\setm\Fp$.  Then splittings of the algebra $\D_{\Lc,\na\fr}$ defined in~\ref{subs:c.red} correspond canonically to line bundles on $X$ with connection $(\L',\na')$ such that
\begin{equation}\label{tcurv = c tcurv}
   \tcurv(\L',\na') = c\cdot\tcurv\Lna\,.
\end{equation}
In other words, we have an equivalence of gerbes $\dd_{\Lc,\na\fr} \isoto \dd_{c\cdot\tcurv\Lna}$ where $\dd_{\Lc,\na\fr}$ denotes the gerbe of splittings of the Azumaya algebra $\D_{\Lc,\na\fr}$.
\end{prop}

\begin{proof}
  As we saw in~\ref{TDO}, the connection~$\na$ on~$\L$ gives an identification of the category of $\D_\Lc$-modules~$\cM$ with the category of quasi-coherent sheaves with connection $(\cF,\na')$ such that $\na'$ is projectively flat of curvature $c F_\na$.  Moreover, $\cF$ is a line bundle if and only if $\cM$ is locally free of rank~1 over $\O_X$, which is equivalent to $\Fr_{X*}\cM$ being locally free of rank $p^{\dim X}$ over $\O_{X\fr}$.  Now denote by~$\tilde\cM$ the $\tilde\D_\Lc$-module corresponding to~$\cM$, and let $\pi\colon (\tilde T^*_\L X)\fr \to X\fr$ be the projection.  Since $\Fr_{X*}\cM = \pi_*\tilde\cM$ and $\tilde\D_\Lc$ is an Azumaya algebra of rank~$p^{2\dim X}$, the above condition means that $\tilde\cM$ is a splitting of~$\tilde\D_\Lc$ on a section of the map~$\pi$.  So all that remains to check is that this section corresponds to~$\na\fr$ if and only if \eqref{tcurv = c tcurv} is satisfied with $\L' = \cF$.

  Since this question is local, we can assume that $\L=\L'=\O$.  The trivialization of~$\L$ gives rise to an equivalence $\D_{X,\Lc}\hmod \isoto \D_X\hmod$.  Under this equivalence, the $\D_\Lc$-module $\cM$ corresponds to the following line bundle with connection:
  \[
      (\L',\na'-c\a) = (\O,d+\b-c\a),
  \]
  where $\a,\b$ are the forms of the connections $\na,\na'$ in the chosen trivializations: $(\L,\na) = (\O,d+\a)$, $(\L',\na') = (\O,d+\b)$.

  Now, if we identify $\tilde T^*_\L X$ with $T^*X$ using our trivialization of~$\L$ then using the diagram
  \[
      \xymatrix@C=4.5em{
        \Spec_{X\fr} Z(\D_\Lc) \ar[r]^-\sim \ar[d]^\wr
                & \tilde T^*_{(\L\fr)^{c^p-c}}X\fr \ar[r]^-\sim_-{(c^p-c)^{-1}} \ar[d]^\wr
                & \tilde T^*_{\L\fr}X\fr \ar[d]^\wr
                \\
        \Spec_{X\fr} Z(\D_X) \ar[r]^-\sim & T^*X\fr \ar[r]^-\sim_{(c^p-c)^{-1}} & T^*X\fr
      }
  \]
  we see that the connection on~$\L\fr$ corresponding to the support of~$\tilde\cM$ is given by $d + (c^p-c)^{-1} ((\b-c\a)\fr - \sC(\b-c\a))$.  (Note that since $d\b = F_{\na'} = cF_\na = c\,d\a$, the form $\b-c\a$ is closed, so it makes sense to apply~$\sC$ to it.)  So our condition now takes the following form:
  \[
      (c^p-c)^{-1} ((\b-c\a)\fr - \sC(\b-c\a)) = -\a\fr,
  \]
  which can  be rewritten (after multiplication by $c^p-c$ and canceling $c^p\a\fr = (c\a)\fr$) as
  \[
      \b\fr - c \a\fr = \sC(\b - c\a).
  \]

  Now, applying $\sP$ (which is injective) to both sides, and using \eqref{delta = PC on cl}~and~\eqref{kappa def}, we see that the above equation is equivalent to
  \[
      \kappa(\b) = c\kappa(\a)
  \]
  which is the same as~\eqref{tcurv = c tcurv} by definition of the extended curvature.
\end{proof}

\begin{rem}
  It is clear (by descent) that this proposition extends to smooth Artin stacks, in particular, it can be applied to $X = (\tilde T^*_{\Ldet}\Bun)\sm$.
\end{rem}

Combining this proposition with Proposition~\ref{prop:Morita eqv stk}, we arrive at the following

\begin{cor}\label{cor DLc eqv GcaL}
  Suppose $\rY,\L,c$ and $(\L',\na_{\on{can}})$ are as in Proposition~\ref{prop:Morita eqv stk} and define $\a_\L := \tcurv(\L',\na_{\on{can}}) \in\eF_ {(\tilde T^*_\L\rY)\sm}$.  Then we have canonically $$\dd_{\rY,\Lc} |_{((\tilde T^*_\L\rY)\fr)\sm} \isoto \dd_{\tilde T^*_\L\rY, c\a_\L}.$$
\end{cor}

We will also need an untwisted version of the above corollary which is proved in a similar way:
\begin{cor}[cf.~\cite{BB,CZ-GL}]\label{cor untw}
  For  a good stack $\rY$ there is a gerbe $\dd_\rY$ on $T^*\rY\fr$ such that $$\D\hmod(\rY)\isom \dd_\rY\hmod\quad\text{and}\quad
  \dd_\rY|_{(T^*\yy)\sm}\isom
  \dd_{\th_\yy\fr}:=
  \dd_{(T^*\yy)\sm,\sP(\th_\yy\fr)} \isom \dd_{(T^*\yy)\sm,\delta(\th_\yy)}
  $$for the canonical $1$-form $\th_\yy$  on
  $(T^*\yy)\sm$.
\end{cor}

Note that the last equivalence follows from the fact that $\sP(\th\fr_\yy)-\delta(\th_\yy)=\kappa(\th_\yy)$ is an
extended curvature of a connection (namely, $(\O_{T^*\yy},d+\th_\yy)$).

\section{Duality and Fourier--Mukai transform for gerbes on torsors over commutative group stacks}
\label{sec:FM}

In this section, we collect some results on duality for commutative group stacks, with some additional information in the case of Picard stacks of (families of) smooth complete curves.  Many results in this section are covered in \cite{CZ-GL} in greater detail.

\subsection{Torsors, gerbes and extensions}\label{subs:trsrs grbs exts}
We fix some base scheme~$S$.  Let $\rX$ be a commutative%
\footnote{In this paper, all commutative group stacks will be assumed ``strongly commutative'' in the sense that the 2-automorphism of the composition $\rX \xra\Delta \rX\x\rX \xra{m} \rX$ induced by the commutativity constraint is identity.
}
 group stack\footnote{we will assume for safety that all stacks in this section are Artin stacks, and actually below we restrict to commutative group stacks with a specific property called ``nice,'' see Definition~\ref{def:nice}} over~$S$.  A {\em torsor} over~$\rX$ is a stack~$\rT$ over $S$ equipped with a morphism $a\colon \rX\x_S\rT\to\rT$ and an (2\nbhp-)isomorphism of compositions $a\circ(\id_\rX\x a) \isoto a\circ(m\x\id_\rT)\colon \rX\x_S\rX\x_S\rT \to \rT$ (where $m\colon \rX\x_S\rX\to\rX$ is the group structure on~$\rX$); these data should satisfy a cocycle condition for maps $\rX\x_S\rX\x_S\rX\x_S\rT \to \rT$ and the map $\rT\to S$ should admit an fppf-local section.  This definition can be generalized to the case where $S$ is replaced by an algebraic stack.

In particular, if $\rX =B\Gm =B(\Gm)_S$ is the classifying stack of the multiplicative group, the $\rX$-torsors are otherwise known as {\em $\Gm$-gerbes}.  An example of a $\Gm$-gerbe is provided by the moduli stack $\g_\cA$ of (local) splittings of an Azumaya algebra~$\cA$ on~$S$, and essentially all $\Gm$-gerbes appearing in this paper are of this form.

If $\g$ is a $\Gm$-gerbe on~$\rX$ then every quasi-coherent sheaf on~$\g$ carries a canonical action of $\Gm$ on it (coming from the inertia stack of $\g$), and therefore the category $\QCoh(\g)$ of quasi-coherent sheaves on~$\g$ decomposes into a product over characters of~$\Gm$ (which are numbered by~$\ZZ$):
\[
    \QCoh (\g)   =   \prod_{n \in \ZZ}   \QCoh (\g)_n .
\]
Moreover, $\QCoh(\g)_0$ is canonically equivalent to $\QCoh(\rX)$; all the other $\QCoh(\g)_n$'s are ``twisted versions'' of the category $\QCoh(\rX)$.  One can define the $n$'th power (iterated Baer sum) $\g^n$ of~$\g$; then we have a canonical equivalence $\QCoh(\g)_n \isoto \QCoh(\g^n)_1$.  We will sometimes use the notation $\g\hmod := \QCoh(\g)_1$, motivated by the case of an Azumaya algebra mentioned above, where $\g_\cA \hmod$ is equivalent to the category $\cA \hmod$ of coherent sheaves of $\cA$-modules.  With this notation, the above decomposition becomes
\begin{equation}\label{eq:QCoh of a gerbe}
    \QCoh (\g)   =   \prod_{n \in \ZZ}   \g^n\hmod .
\end{equation}

Now let $\rX,\rY$ be two commutative group stacks over~$S$.  Then $\rX_\rY := \rX\x_S\rY \to \rY$ is a commutative group stack over~$\rY$, so it makes sense to speak about $\rX_\rY$-torsors.  Let $\rZ$ be such a torsor.  We want to explain what structure one needs to specify on~$\rZ$ in order for $\rZ$ to become a commutative group stack over~$S$.  Let $\pr_1,\pr_2,m_\rY\colon \rY\x_S\rY \to\rY$ be the projections and the multiplication map, respectively.  Then our structure is an isomorphism of $\rX_\rY$-torsors $m_\rY^*\rZ \isoto \pr_1^*\rZ \x^{\rX_\rY} \pr_2^*\rZ$ where $\x^{\rX_\rY}$ is the Baer sum of torsors, as well as an equivariant structure on $m$ under the $\gS_2$-action on both sides, lifting the permutation action on $\rY\x_S\rY$.  In addition to that, there should be compatibility isomorphisms  on the triple product, which are subject to cocycle conditions on the quadruple product.  We will not write out them all, because they won't be used explicitly, but they can be recovered from the definition of strictly symmetric monoidal category.

This structure will be referred to as a {\em multiplicative structure} on the $\rX_\rY$-torsor $\rZ$.  For $\rX=B\Gm$ we will say that $\rZ$ is a {\em multiplicative $\Gm$-gerbe} on~$\rY$.  From this data one can define a commutative group structure on~$\rZ$ and construct a Cartesian square of commutative group stacks:
\[
    \cxym{\rX\ar[r]\ar[d]&  \rZ\ar[d]\\
          \pt_S=S\ar[r]&        \rY}
\]
with the right vertical map being locally essentially surjective in fppf topology (as a morphism of functors of points).  In this situation we will call $\rZ$ an {\em extension} of~$\rY$ by~$\rX$.

We list some properties of extensions of commutative group stacks:
\begin{enumerate}
  \item\label{cgs 3-step ext} If $\rY_1$ is an extension of $\rX_2$ by $\rX_1$, and $\rZ$ is an extension of $\rX_3$ by~$\rY_1$, then one can canonically construct another pair of extensions $\rX_2\to\rY_2\to\rX_3$ and $\rX_1\to\rZ\to\rY_2$.
  \item\label{cgs ext by ZZ} Let $\ZZ_S=\coprod_{n\in\ZZ}S$ be the ``constant group scheme'' with fiber~$\ZZ$ and $\rX\to\wtld\rX\to\ZZ_S$ an extension of commutative group stacks.  Then $\rT:= \wtld\rX\x_{\ZZ_S}\{1\}_S$ is naturally an $\rX$-torsor, and the correspondence $\wtld\rX \mapsto \rT$ gives an equivalence of 2-groupoids between extensions of $\ZZ_S$ by~$\rX$ and $\rX$-torsors.  (Note that it is important here that we use \emph{strictly}  commutative group stacks.)
  \item\label{cgs ext G and BG} If $G$  is a commutative group scheme and $\rX$ is a commutative group stack then there is an equivalence of categories between extensions of~$\rX$ by~$G$ and morphisms of commutative group stacks from~$\rX$ to the classifying stack~$BG$.
\end{enumerate}

Let us apply point~\ref{cgs 3-step ext} to $\rX_1=B\Gm$, $\rX_3=\ZZ_S$, and $\rX_2=\rX$ being any commutative group stack.  From point~\ref{cgs ext by ZZ} we see that the choice of $\rY_2$ is equivalent to the choice of a torsor $\rT$ over $\rX$; similarly, $\rY_1\to\rZ\to\ZZ$ gives a torsor $\wtld\rT$ over $\rY_1$.  On the other hand, $\rY_1$ and $\wtld\rT$ are $\Gm$-gerbes on $\rX$ and $\rT$ respectively, so let us denote them by $\gx=\rY_1$, $\g_\rT=\wtld\rT$.  The action of $\gx$ on~$\g_\rT$ can be described similarly to the way we described the structure of multiplicative gerbe: if $m$~and~$a$ are the multiplication map on~$\rX$ and the action of~$\rX$ on~$\rT$ then one should have an isomorphism $a^*\g_\rT\isoto \pr_1^*\gx\cdot\pr_2^*\g_\rT$ together with the usual compatibilities with the isomorphism $m^*\gx\isoto \pr_1^*\gx\cdot\pr_2^*\gx$ on higher products. (Here ${\cdot}={\x^{B\Gm}}$ is the Baer sum of $\Gm$-gerbes.)

\subsection{Duality and Fourier--Mukai transform}
Given $\rX$ a commutative group Artin stack, let $\rX\chk=\cHom_{\rm cgs}(\rX,B\Gm)$ (``cgs'' stands for ``commutative group stacks'') be the group stack whose $S'$-points for any $S'\to S$ correspond to morphisms $\rX_{S'} := \rX\x_SS'\to B(\Gm)_{S'}$ of commutative group stacks over~$S'$.  By point~\ref{cgs ext G and BG} above, they also correspond to extensions of $\rX_{S'}$ by $(\Gm)_{S'}$.  Then $\xv$ has a structure of a commutative group stack, although it might not be an Artin stack, merely a stack in the fppf-topology.

We have a canonical universal morphism $\rX\x\rX\chk\to B\Gm$ which gives a line bundle~$\cP$ on~$\rX\x\rX\chk$ called {\em Poincar\'e bundle}.  From this morphism, we also get a map $r\colon \rX\to\rX^{\vee\vee}$.
\begin{defn}[cf.~\cite{Ar-app}]\label{def:nice}
  We say that a commutative group Artin stack $\rX$ is {\em nice} if it satisfies the following four  conditions:
  \begin{enumerate}
    \item The stack $\xv$ is an Artin stack.
    \item The map $r\colon \rX\to\rX^{\vee\vee}$ is an isomorphism of group stacks.
    \item There exists a left adjoint $\pr_{2!}$ to $\pr_2^*\colon D^b(\rZ)\to D^b(\rX\x_S\rZ)  $ for $\pr_2\colon \rX\x_S\rZ\to\rZ$ for any Artin stack  $\rZ$ over $S$  
        where $D^b(\rY)$ denotes the bounded derived category of quasi-coherent sheaves on a stack~$\rY$. Thus we have a {\em Fourier--Mukai functor} $\Phi_\rX\colon D^b(\rX)\to D^b(\rX\chk)$ by the  formula
        \begin{equation}\label{eq:F-M formula}
          \Phi_\rX(\cF)=\pr_{2!}(\pr_1^*\cF\tsr\cP_\rY)
        \end{equation}
        for $\pr_1\colon \rX\x_S\rX\chk\to\rX$, $\pr_2\colon \rX\x_S\rX\chk\to\rX\chk$ the natural projections.

    \item The functor $\Phi_\rX\colon D^b(\rX)\to D^b(\rX\chk)$ is an equivalence of categories.
  \end{enumerate}
\end{defn}

As explained in~\cite{BB}, examples of nice group stacks include $\ZZ_S$, $B(\Gm)_S$ and abelian schemes over~$S$, as well as fiber products over~$S$ of such.  The duality operation $\chk$ preserves products over~$S$, interchanges $\ZZ$ and $B\Gm$, and takes an abelian scheme to the dual one in the classical sense.  Also, the property of being nice is fppf-local on~$S$. More generally, it  includes Beilinson's 1-motives defined in \cite[Appendix~A]{CZ-GL}.

In particular, if $T\to S$ is a family of smooth complete curves (\ie smooth projective morphism of relative dimension~$1$), then the Picard stack $\Pic(T/S)$ is a nice group stack. It is well-known that the Picard stacks of curves are self-dual, \ie $\Pic(T/S)\chk=\Pic(T/S)$.  This is the only example to be used in the rest of the paper.   The Poincar\'e bundle is constructed as follows: given an $S'$-point of $\Pic(T/S)\x_S\Pic(T/S)$, let $\L,\L'$ be the corresponding pair of line bundles on $T\x_SS'$ and $\pi\colon T\x_SS'\to S'$ the projection.  Then the pullback of the Poincar\'e bundle to~$S'$ is given by $\det R\pi_*\O_{T\x_SS'} \tsr(\det R\pi_*\L)^{-1} \tsr(\det R\pi_*\L')^{-1} \tsr\det R\pi_*(\L\tsr\L')$.

\begin{rem}
  Note that in the example $\rX=\ZZ_S$ we cannot replace $\pr_{2!}$ by $\pr_{2*}$ in the definition of $\Phi_\rX$ because the latter does not preserve quasi-coherence (for every smooth $S'\to \rZ$ and $\cF\in D^b(\rX\xx_S\rZ)$, the object $(\pr_{2*}\cF)_{S'}$ is the \emph{product}, not the direct  sum, of pushforwards from connected  components  of $\rX\x_S\rZ=\ZZ\x\rZ$ (cf.\ the  correction to the Fourier--Mukai functor  in \cite{Gr}). Probably it is  possible  to generalize the notion  of commutative group stack so as to include examples  like $B\GG_a$ and its  dual (which is the divided power neighborhood in $\GG_a$---not an Artin stack).
\end{rem}

Now let us discuss the twisted Fourier--Mukai transform for torsors and $\Gm$-gerbes.  We begin with the following observation.  Let $\rX\to\rZ\to\rY$ be an extension of commutative group stacks over~$S$, as in~\ref{subs:trsrs grbs exts}, and assume that it splits fppf-locally over~$S$.  Then $\rY\chk\to\rZ\chk\to\rX\chk$ is also a locally split extension.  Also we see that if $\rX$~and~$\rY$ are nice, then so is~$\rZ$.\footnote{In~\cite[Lemma~2.5]{BB} it is claimed that this is true for any extension of nice stacks.  But the author couldn't prove this because of the problem showing that $\rZ\chk\to\rX\chk$ is surjective.}

Now let $\rX$ be a nice group stack, and $\rX\to\wtld\rX\to\ZZ_S$ an extension as in~\ref{subs:trsrs grbs exts}, point~\ref{cgs ext by ZZ}.  Then we have the corresponding $\rX$-torsor $\rT$.  Since $\ZZ_S\chk = B(\Gm)_S$, we see from the previous paragraph that there is a dual extension (note that any extension of~$\ZZ_S$ is automatically locally split): $B\Gm \to\g_{\rX\chk}:=\wtld\rX\chk \to\rX\chk$.  The group stack $\g_{\rX\chk}$ is nice and it is a multiplicative $\Gm$-gerbe on~$\rX\chk$.  The action of $B\Gm$ on $\g_{\rX\chk}$, as on any $\Gm$-gerbe, gives an action of~$\Gm$ on any object of $D^b(\g_{\rX\chk})$, which gives a decomposition of the latter category into a sum over characters of~$\Gm$ (see~\eqref{eq:QCoh of a gerbe}):
\[
    D^b(\g_{\rX\chk})  =\prodpr_{n\in\ZZ}  D^b(\g_{\rX\chk})_n,
\]
where the prime indicates the subcategory of the product category consisting of objects that are bounded with respect to the product of the standard t-structures on the factors.
Under the Fourier--Mukai equivalence $\Phi_{\wtld\rX}\colon D^b(\wtld\rX)\isoto D^b(\g_{\rX\chk})$ this decomposition corresponds to that of $D^b(\wtld\rX)$ coming from the decomposition $\wtld\rX=\coprod_{n\in\ZZ}\rT_n$ where $\rT_n=\wtld\rX\x_{\ZZ_S}\{n\}_S$. Thus, for any~$n$, we get an equivalence $D^b(\rT_n) \isoto D^b(\g_{\rX\chk})_n$.  In particular, for $n=1$, we have $\rT_1=\rT$, so our equivalence takes the form:
\[
    D^b(\rT)  \isoto  D^b(\g_{\rX\chk})_1.
\]

Now let $\g_{\wtld\rX}$ be a multiplicative $\Gm$-gerbe on~$\wtld\rX$, \ie an extension of $\wtld\rX$ by~$B\Gm$.  Then on~$\g_{\wtld\rX}$ we have a 3-step filtration (in the sense of point~\ref{cgs 3-step ext}) with ``quotients'' $B\Gm$, $\rX$ and $\ZZ$.  Since $\rX\to\wtld\rX\to\ZZ_S$ locally splits, we have that $B\Gm\to\g_{\wtld\rX}\to\wtld\rX$ is locally split if and only if $B\Gm\to\gx\to\rX$ is.  (Here $\gx$ is the restriction of $\g_{\wtld\rX}$ to~$\rX$; in other words, $\gx:= \g_{\wtld\rX} \xx_{\ZZ_S} \{1\}_S$.)  Assume that these extensions are locally split; then for the dual group stack $(\g_{\wtld\rX})\chk$ we also have a filtration with quotients $B\Gm$, $\rY:=\rX\chk$ and~$\ZZ_S$.

So we have multiplicative $\Gm$-gerbes $\gy:=(\wtld\rX)\chk$ and $\g_{\wtld\rY}:=(\g_{\wtld\rX})\chk$ on $\rY$ and $\wtld\rY:=(\gx)\chk$, respectively.  Put $\rT_n=\wtld\rX\x_{\ZZ_S}\{n\}_S$, $\rT_n'=\wtld\rX\x_{\ZZ_S}\{n\}_S$, $\g_{\rT_n}=\g_{\wtld\rX}\x_{\ZZ_S}\{n\}_S$, $\g_{\rT_n'}=\g_{\wtld\rY}\x_{\ZZ_S}\{n\}_S$.  Then we get decompositions
\[
    D^b(\g_{\wtld\rX})  =\prodpr_{m,n\in\ZZ} D^b(\g_{\rT_m})_n, \quad
    D^b(\g_{\wtld\rY})  =\prodpr_{m,n\in\ZZ} D^b(\g_{\rT_m'})_n,
\]
and one can check that $\Phi_{\g_{\wtld\rX}}$ interchanges $m$~and~$n$ in these decompositions, more precisely, it maps $D^b(\g_{\rT_m})_{-n}$ to $D^b(\g_{\rT_n'})_m$.  In particular, setting $m=n=1$, we obtain an equivalence $D^b(\g_\rT)_{-1} \isoto D^b(\g_{\rT'})_1$.  So we get the following

\begin{prop}\label{prop:dual gerbe on torsor}
  Fix a nice commutative group stack~$\rX$ and denote by~$\rY=\rX\chk$ its dual group stack.  Let $\tx$ be an $\rX$-torsor, $\gx$ a multiplicative $\Gm$-gerbe on~$\rX$ and $\gtx$ a $\Gm$-gerbe on $\tx$ with an action of $\gx$.  Then there is a canonical ``dual'' data $(\ty,\gy,\gty)$ on~$\rY$ and a splitting~$\cQ$ of $\pr_1^*\gtx\cdot\pr_2^*\gty$ on $\tx\x_S\ty$, such that $\cQ^{-1}$ induces an equivalence $D^b(\gtx)_1 \isoto D^b(\gty^{-1})_1$ given by a formula similar to~\eqref{eq:F-M formula}.
\end{prop}

The following proposition is useful for showing that two given gerbes on torsors are dual:

\begin{prop}\label{prop:trsr-grb duality data}
  Let $\rX$ and $\rY$ be dual commutative group stacks, and let $\tx$, $\gx$ and $\gtx$ be as in Proposition~\ref{prop:dual gerbe on torsor}.  Suppose also that we are given similar data $\ty,\gy,\gty$ for~$\rY$.  Then the data of identification of $(\ty,\gy,\gty)$ with the dual of $(\tx,\gx,\gtx)$ is equivalent to the following data:
  \begin{enumerate}\rncmd\theenumi{D\arabic{enumi}}
    \item\label{idfn trsrs w duals to grbs} an identification of $\gy$ with the dual gerbe to $\tx$ and of $\gx$ with dual of~$\ty$ as multiplicative gerbes;
    \item\label{splitting Q} a splitting $\cQ$ of the gerbe $\pr_1^*\gtx\cdot\pr_2^*\gty$ on $\tx\x_S\ty$;
    \item\label{isoms aX aY} an isomorphism $\a_\rX \colon \pr_{1,3}^*\cP_{\ty}\tsr\pr_{2,3}^*\cQ \isoto \mu^* ((a_{\tx}\x\id_{\ty})^*\cQ)$ where $\mu\colon \pr_1^*\gx\cdot\pr_2^*\gtx\cdot\pr_3^*\gty \isoto (a_{\tx}\circ\pr_{1,2})^*\gtx\cdot\pr_3^*\gty$ is the equivalence of gerbes on $\rX\x\tx\x\ty$ coming from the action of $\gx$ on~$\gtx$  (here $a_{\tx}\colon \rX\x\tx \to\tx$ is the action map, and $\cP_{\ty}$ is the universal splitting of $\gx\x\ty$ (as a gerbe on $\rX\x\ty$) corresponding to the identification in part~\ref{idfn trsrs w duals to grbs}); and also a similar isomorphism~$\a_\rY$ for~$\rY$.  These isomorphisms should satisfy the natural cocycle conditions on $\rX\x\rX\x\tx\x\ty$, $\tx\x\rY\x\rY\x\ty$ and $\rX\x\tx\x\rY\x\ty$.
  \end{enumerate}
\end{prop}

\begin{proof}[Proof sketch]
  We begin by noting that, by Proposition~\ref{prop:dual gerbe on torsor}, we have a canonical triple $(\ty',\gy',\g_{\ty'})$ dual to $(\tx,\gx,\gtx)$.  We want to show that the data of~(\ref{idfn trsrs w duals to grbs})--(\ref{isoms aX aY}) is equivalent to an identification $(\ty,\gy,\gty) \isoto (\ty',\gy',\g_{\ty'})$.  One can see that the data of~(\ref{idfn trsrs w duals to grbs}) amounts to an isomorphism of $\rY$-torsors between $\ty$~and~$\ty'$, and an isomorphism of multiplicative $\Gm$-gerbes on~$\rY$ between $\gy$~and~$\gy'$.

  Denote by $\wtld\rX$ and $\g_{\wtld\rX}$ the extension of~$\ZZ$ by~$\rX$ and the $\Gm$-gerbe on it corresponding to $\tx$~and~$\gtx$, respectively.  Then $\g_{\ty'}$ classifies multiplicative splittings of~$\g_{\wtld\rX}$: in other words, for an $S$-scheme $S'$, the groupoid $\Hom_S(S',\g_{\ty'})$ is canonically equivalent to the groupoid of multiplicative splittings of~$\g_{\wtld\rX}\x_SS'$ considered as a multiplicative $\Gm$-gerbe on the group stack $\wtld\rX\x_SS'$ over~$S'$.

  But one can argue that such multiplicative splittings are determined by their restriction (together with the isomorphisms giving multiplicative structure) to $\wtld\rX\x_\ZZ\{0,1\}= \rX \sqcup \tx$. Therefore a map $\psi\colon \gty \to \g_{\ty'}$ as mere stacks over~$S$ is equivalent to a multiplicative splitting $\tilde\cQ$ of $(\gx \sqcup \gtx)\x \gty$ on $(\rX \sqcup \tx)\x \gty \to \gty$.  If $\psi$ is actually a map of $\Gm$-gerbes over~$\ty=\ty'$ then the restriction of~$\tilde\cQ$ to $\rX \x \gty$ (the first component of the disjoint union) is identified with $\cP_{\ty}$ and the restriction of~$\tilde\rQ$ to $\tx \x \gty$ gives the splitting~$\cQ$ from~(\ref{splitting Q}) together with an isomorphism~$\a_\rX$ as in~(\ref{isoms aX aY}) satisfying the relevant cocycle condition.  Finally, $\a_\rY$ and the other cocycle conditions are responsible for the compatibility of~$\psi$ with the action of $\gy$ on $\gty$~and~$\g_{\ty'}$.
\end{proof}

\begin{rem}\label{rem:isom aX enough}
  Note that, if we are just given the data of~(\ref{idfn trsrs w duals to grbs}) then one can construct the needed identification up to a twist by pullback of a $\Gm$-gerbe on~$S$.  This is because the data of $\gx,\tx,\gtx$ amounts to the data of a  torsor over the (non-commutative)   Heisenberg-type extension of $\rX\x\rX\chk$ by $B\Gm$.  The dual data corresponds then to a torsor over the similar extension for $\rY$ induced from the first one by the identification $\rX\x\rX\chk\isoto\rX\x\rY\isoto\rY\x\rX\isoto\rY\x\rY\chk$.  On the other hand (\ref{idfn trsrs w duals to grbs}) gives just an identification of $\rY\x\rY\chk$-torsors, so $(\gy,\ty,\gty)$ differs from the dual of $(\gx,\tx,\gtx)$ by twisting the third component by a $B\Gm$-torsor on $S$.
\end{rem}

\begin{rem}\label{rem: data for dty up to gerb on base}
  Also, suppose that, in the setting of the above proposition, only the identification of~$\gx$ with dual of $\ty$, the splitting~$\cQ$ and the isomorphism~$\a_\rX$ are given.  On the category $\gty^{-1} \hmod$ we have functors of tensor multiplication by quasi-coherent sheaves on $\ty$.  On  the other hand,  $\gtx\hmod$ is acted on by convolutions with objects in $\gx \hmod$ (thanks to the action of $\gx$ on $\gtx$).  The isomorphism $\a_\rX$ then ensures that the functor $\Phi_\cQ \colon \gtx\hmod \to \gty^{-1}\hmod$ defined by $\cQ^{-1}$ intertwines these actions.  (Note that $\QCoh(\ty)$ is already identified with $\gx\hmod$ by Fourier--Mukai with kernel $\cP_{\ty}$.)  This is already enough to conclude that $\Phi_\cQ$ is an equivalence because, locally over~$S$, each of the two categories if ``freely generated'' by one object (a family of skyscrapers for convolution, and a global splitting for multiplication) and $\Phi_\cQ$ interchanges these generators.
\end{rem}

\subsection{The case of Picard stack of a curve}\label{subs:Pic(C) case}
In this subsection, we will consider the case where $\rX$ is the Picard stack of a family~$C$ of complete smooth curves over~$S$ (in other words, $C\to S$ is a proper smooth morphism of relative dimension~$1$, and $\rX=\Pic(C/S)$).  As mentioned above, $\rX$ is nice in this case, and we have canonically $\rX\chk \isom \rX$.  In fact, it is easy to construct a morphism $\rX\chk\to\rX$ by means of the Abel--Jacobi map $\AJ\colon C \to\Pic(C/S)$ sending $x\in\Hom_S(S',C)$ to the the object in $\Hom_S(S',\Pic(C/S))$ corresponding to the line bundle $\O_{C\x_SS'}(\Gamma_x)$.  Namely the map is given by composition
\begin{equation}\label{eq:the AJ* chain}
  \begin{split}
    \rX\chk = \cHom_{{\rm cgs }/S}(\rX,B\Gm) \to \cHom_S(\rX,B\Gm) \xra{\AJ^*}  \\
        \xra{\AJ^*}   \cHom_S(C,B\Gm) = \Pic(C/S) = \rX.
  \end{split}
\end{equation}

The main idea of this subsection is that in the case of $\Pic(C)$ the data in Proposition~\ref{prop:trsr-grb duality data} is determined by its pullback under the map~$\AJ$.  The above construction of the isomorphism $\rX\chk\isoto\rX$ is an example of this phenomenon.  Indeed, the fact that the composition of the maps in~\eqref{eq:the AJ* chain} is an isomorphism means that the maps of commutative group stacks $\Pic(C/S) \to B\Gm$ (\ie ``multiplicative line bundles'' on $\Pic(C/S)$) are uniquely determined by their precomposition with (\ie pullback by) $\AJ$.

Now we want to state and prove similar result for $\Gm$-gerbes on $\Pic(C)$ and $\Pic(C)$-torsors.

Note that by \cite[Lemma~A.3.4]{CZ-GL}, such gerbes are automatically split locally over~$S$ (the proof of this is based on a cohomology  vanishing result of \cite{Br}), so we can apply  the results of previous subsection to them without mentioning this condition.

\begin{prop}\label{prop:mult grb on Pic}\par
\begin{enumerate}
  \item\label{gerbes on pic}  Pullback by $\AJ$ gives an equivalence of $2$-groupoids  \[\{\text{\upshape multiplicative gerbes on  $\Pic(C/S)$}\}
  \ \isoto\
  \{\text{\upshape $\Gm$-gerbes on $C$ split \'etale locallly  over $S$}\}.\]
  \item\label{dual to grb on pic} Given a multiplicative gerbe $\g$ on $\Pic(C/S)$, the dual torsor (\ie the torsor of fiberwise multiplicative splittings of~$\g$) is identified with the  torsor of  \emph{all} (fiberwise) splittings of $\AJ^*\g$.
\end{enumerate}
\end{prop}
\begin{proof}
  Part~(\ref{gerbes on pic}) follows  from the above-mentioned statement about line bundles. Indeed, since multiplicative gerbes on $\Pic(C/S)$  split \'etale locally over ~$S$, we see that the above map of $2$-groupoids  is obtained from the analogous map on Picard (\ie commutative monoidal) $1$-groupoids by taking the prestack on $S$ of classifying $2$-groupoids and \'etale-sheafifying.

  Part~(\ref{dual to grb on pic}) follows from (\ref{gerbes on pic}) by taking the groupoid of $1$-morphisms from the trivial gerbe to $\g$, resp., $\AJ^*\g$ on both sides.
\end{proof}
Below we discuss actions of multiplicative gerbes on $\Pic(C)$ on gerbes on $\Pic(C)$-torsors.
\begin{lem}\label{lem:ext of action from AJ image}
  Let $\gx$ be a multiplicative $\Gm$-gerbe on $\rX = \Pic(C/S)$.  Also let $\rT$ be an $\rX$-torsor and $\g_\rT$ a $\Gm$-gerbe on~$\rT$ with an action of~$\gx$.  Suppose given a multiplicative splitting $\cS_\rX$ of~$\gx$ and a splitting $\cS_\rT$ of~$\g_\rT$.  Put $\g_C=\AJ^*\gx$, $\cS_C=\AJ^*\cS_\rX$  Denote by $m\colon \rX\x\rX\to\rX$ and $a\colon \rX\x\rT \to \rT$ the group structure on~$\rX$ and the action of $\rX$ on~$\rT$, and by $m_{\AJ},a_{\AJ}$ their precomposition with $\AJ\x\id_\rX \colon C\x\rX \to \rX\x\rX$ and $\AJ\x\id_\rT \colon C\x\rT \to \rX\x\rT$, respectively.  Finally, assume that we have an isomorphism $\a_{\AJ}\colon a_{\AJ}^*\cS_\rT\isoto \pr_1^*\cS_C \cdot \pr_2^*\cS_\rT$ of two splittings of $\pr_1^*\g_C\cdot\pr_2^*\g_\rT \isoto a_{\AJ}^*\g_\rT$ such that the composite isomorphism $a_{2,\AJ}^*\cS_\rT \xra{(\id_C\x a_{\AJ})^*\a_{\AJ}} \cS_C\bx a_{\AJ}^*\cS_\rT \xra{\id_{\cS_C}\bx \a_{\AJ}} \cS_C\bx\cS_C\bx\cS_\rT$ (where $a_{2,\AJ} \colon C\x C\x\rT\to\rT$ is the restriction of the ``triple addition'' map $\rX\x\rX\x\rT\to\rT$) is equivariant under switching the first two factors.

  Then $\a_{\AJ}$ can be extended uniquely to an isomorphism $\a\colon m^*\cS_\rT \isoto \cS_\rX \bx \cS_\rT$ satisfying the cocycle condition on $\rX\x \rX\x \rT$ so that it gives an ``action'' of $\cS_\rX$ on~$\cS_\rT$.
\end{lem}

\begin{proof}
  We begin by noting that we can use $\cS_\rT$ to identify $\gx$ with the trivial gerbe (as multiplicative gerbes) so that $\cS_\rY$ becomes the trivial splitting.  Also, since our statement is local over~$S$, we may assume that $\rT$ is the trivial torsor: $\rT=\rX= \Pic(C/S)$, $\g_\rT$ is the trivial gerbe on it and the action of~$\gx$ on~$\g_\rT$ is compatible with the chosen splittings.  Having made these identifications, we can think of~$\cS_\rT$ as a line bundle on~$\rX$ which we will denote by~$\L$, and of~$\a_{\AJ}$ as an isomorphism $m_{\AJ}^*\L  \isoto  \O_C \bx \L$.  Our task then becomes to extend $\a_{\AJ}$ to an isomorphism $\a\colon m^*\L \isoto \O_\rX \bx \L$ subject to the usual cocycle condition (note that this is the same as decent data for~$\L$ along $\rX\to S$).

  Also, without loss of generality, we can assume that the base~$S$ is connected and the family $C \to S$ has (geometrically) connected fibers.  Then it makes sense to speak of the genus of~$C$, which we will denote by $g(C)$.  For any integer $d\ge1$, define $m_{d.\AJ}$ to be the composition
  \[
      \underbrace{C\x_S\dots\x_SC}_d \x_S \rX \xra{\AJ\x\dots\x\AJ\x\id_\rX} \underbrace{\rX\x_S\dots\x_S\rX \x_S \rX}_{d+1} \to \rX,
  \]
  where the last map is the $(d+1)$-fold addition map and let
  \[
      \pi_d\colon \underbrace{C\x_S\dots\x_SC}_d \x_S \rX  \to \rX
  \]
  be the projection to the last factor.  Using the isomorphism $\a_{\AJ}$, we can construct (by induction on~$d$) an isomorphism $\a_{d,\AJ} \colon m_{d,\AJ}^*\L \isoto \pi_d^*\L$.  The symmetry condition on~$\a_{2,\AJ}$ imposed in the statement of the lemma implies that $\a_{d,\AJ}$ is also equivariant with respect to the symmetric group~$\gS_d$ permuting the $d$ copies of~$C$.  It follows that $\a_{d,\AJ}$ can be descended to $C^{\angs{d}}\xx_S \rX$, where $C^{\angs{d}}$ is the $d$'th symmetric power of~$C$.

  Now let $\rX^{[d]}$ be the part of $\rX=\Pic(C/S)$ parametrizing line bundles of degree~$d$, so that $\rX = \coprod_{d\in\ZZ} \rX^{[d]}$.  We have the natural map $\AJ_d \colon C^{\angs{d}} \to \rX^{[d]}$.  We claim that, for $d$ large enough, any line bundle on $C^{\angs{d}}\xx_S \rX$ canonically descends to $\rX^{[d]}\xx_S \rX$.  To see this, recall that the stack~$\rX^{[d]}$ is a $\Gm$-gerbe over certain abelian scheme over~$S$ (the ``$d$'th Jacobian'' of~$C$), which we will denote by~$J^d$.  By a classical result, for $d\ge 2g(C)-1$ the composed morphism $C^{\angs{d}} \xra{\AJ_d} \rX^{[d]} \to J^d$ is a smooth fibration with fibers isomorphic to $\PP^{d-g(C)}$.  The map $\AJ_d$ is isomorphic locally over~$J^d$ to the morphism $\PP^{d-g(C)}_U \to (B\Gm)_U$ corresponding to the line bundle $\O(1)$ on $\PP^{d-g(C)}_U$ (where $U$ is a neighborhood in~$J^d$).  Hence our claim about descent to~$\rX^{[d]}$ (for $d\ge d_0 := \max \{g(C)+1,2g(C)-1\}$) follows from the following statement (whose proof is left to the reader):

  \begin{lem}
    Let $T$ be a scheme and $n>0$ an integer.  Then pullback by the map $\PP^n_T \to (B\Gm)_T$ corresponding to the line bundle $\O_{\PP^n_T}(1)$ induces an equivalence between groupoids of line bundles on $(B\Gm)_T$ and $\PP^n_T$.
  \end{lem}

  Thus, for $d\ge d_0$, the isomorphism $\a_{d,\AJ}$ uniquely descends to $\rX^{[d]}$, so we have constructed the desired isomorphism~$\a$ on $\rX^{\ge d_0} \xx_S\rX \ctd \rX\xx_S\rX$ where $\rX^{\ge d_0} := \coprod_{d\ge d_0} \rX^{[d]} \ctd \rX$; denote this isomorphism by~$\tilde\a$.  The cocycle condition for~$\tilde\a$ follows from the construction.  Now, if $x,x'$ are two $S'$-points of~$\rX$ then $\tilde\a$ gives an identification $\a_{x,x'-x} \colon x^*\L \isoto x'^*\L$ whenever the difference $x'-x$ (with respect to the group structure on~$\rX$) lands in $\rX^{\ge d_0}$, and the cocycle condition guarantees that $\a_{x',x''-x'} \circ \a_{x,x'-x} = \a_{x,x''-x}$ whenever all three isomorphisms are defined.  But this is clearly enough to construct $\a_{x,x'-x}$ for any $x,x'$, which gives our isomorphism $\a\colon m^*\L \to \O_\rX \bx \L$.
\end{proof}

\begin{rem}\label{rem:S2-symmetry automatic}
  Suppose that, in the statement of Lemma~\ref{lem:ext of action from AJ image}, the family $C\to S$ has geometrically connected fibers.  Then one can show that the $\gS_2$-equivariance condition on $\a_{2,\AJ}$ is satisfied automatically by properness considerations.  Indeed, it always holds on the diagonal $C\x\rT \xra{\Delta_C\x\id_\rT} C\x C \x\rT$.  The obstruction to $\gS_2$-equivariance of $\a_{2,\AJ}$ is an invertible function on $C\x C\x \rT$.  But any function on $C\x C\x\rT$ descends to~$\rT$, so if it equals~$1$ on the diagonal, it has to equal~$1$ everywhere, as desired.
\end{rem}

\begin{cor}\label{cor:trsr-grb duality data on Im AJ}
  Suppose we are in the setup of Proposition~\ref{prop:trsr-grb duality data} with $\rX=\Pic(C/S)$, and the data {\upshape(\ref{idfn trsrs w duals to grbs})--(\ref{splitting Q})} is given.  Suppose that we have an isomorphism  $\a_{\rX,\AJ}\colon (\AJ\x \id_{\tx\x\ty})^* \pr_{1,3}^*\cP_{\ty}\tsr\pr_{2,3}^*\cQ \isoto (\AJ\x \id_{\tx\x\ty})^* (\mu^* ((a_{\tx}\x\id_{\ty})^*\cQ))$.  Then there is a unique isomorphism $\a_\rX \colon \pr_{1,3}^*\cP_{\ty}\tsr\pr_{2,3}^*\cQ \isoto \mu^* ((a_{\tx}\x\id_{\ty})^*\cQ)$ satisfying the cocycle condition on $\rX \x \rX \x \tx \x \ty$ of Proposition~\ref{prop:trsr-grb duality data}, part~(\ref{isoms aX aY}), such that $\a_\rX = (\AJ\x\id_{\tx\x\ty})^* \a_{\rX,\AJ}$.

  Moreover, if we have similar isomorphism~$\a_\rY$ on $\tx \x \rX \x \ty$ then the compatibility on $\rX \x \tx \x \rY \x \ty$ can be checked on $C \x \tx \x C \x \ty$ for $\a_{\rX,\AJ}$ and $\a_{\rY,\AJ}$.
\end{cor}

\begin{proof}[Proof sketch]
  The first statement in the corollary follows from Lemma~\ref{lem:ext of action from AJ image} with $S$, $C$, $\rX$, $\rT$, $\gx$, $\g_\rT$, $\cS_\rX$, $\cS_\rT$ replaced by $\ty$, $C\x\ty$, $\rX\x\ty$, $\tx\x\ty$, $\gx\x\ty$, $\gtx\x\ty$, $\cP_{\ty}$, $\cQ$.  (Although Lemma~\ref{lem:ext of action from AJ image} requires $S$ to be a scheme rather than a stack, the stack case can be deduced from the scheme case by descent.)  The second part also not hard to prove using the fact that $\Pic(C)$ is generated by the image of the Abel--Jacobi map.  The details are left to the reader.
\end{proof}

\section{\texorpdfstring{$\D$}{D}-modules on \texorpdfstring{$\Bun_N$}{Bun\_N} and the Hitchin fibration}
\label{sec:hitchin}

In this section we will recall some results and constructions from~\cite{BB} about the geometry of Hitchin fibration (and its twisted version in characteristic~$p$) and its application to $\D$-modules on~$\Bun$. 

\subsection{Hitchin fibration}\label{subs:hitchin fib}
Let $C$ be a smooth connected projective curve over~$\k$ of genus greater than~$1$, and fix an integer $N>1$.  Denote by $\Bun_N$ the moduli stack of rank~$N$~bundles on~$C$.  By definition, for a scheme~$S$, the groupoid of maps $S \to \Bun_N$ is equivalent to the groupoid of rank~$N$~vector bundles on $C\x S$.  It is classical that the cotangent bundle to $\Bun_N$ is identified with the stack $\Higgs$ of Higgs bundles.  Recall that for a vector bundle~$\cE$ on~$C$, a structure of \emph{Higgs bundle} on it, also  known as a \emph{Higgs field}, is  an $\O$-linear map $\cE\to\cE\tsr\omega_C = \cE\tsr\Om^1_C$.   Denote by~$\rB$ the scheme which the affine space corresponding to the vector space
\[
    \dsum_{i=1}^N \Gamma(C,\omega_C^{\tsr i}) .
\]

Define the Hitchin map $H\colon \Higgs\to\rB$ as follows.  For a $\k$-point $y$ of $\Higgs$ corresponding to a Higgs bundle $(\cE,a)$, we define~$H(y)$ to be the point of~$\rB$ given by $(\tr a,\tr \Lambda^2a,\dots,\tr\Lambda^Na=\det a)$ (one can extend this to $S$-points in a straightforward way).  Here is another interpretation of the Hitchin map.  Note that a Higgs field on a given vector bundle~$\cE$ is equivalent to a map $\cT_C\tsr\cE\to\cE$, and therefore to an action of $\Sym\cT_C$ on~$\cE$.  In other words, a Higgs bundle of rank~$N$ is equivalent to a coherent sheaf~$\tilde\cE$ on~$T^*C$ whose pushforward to~$C$ is a rank~$N$ vector bundle.  Now define a divisor $\tilde C \ctd T^*C$ as the ``support with multiplicities'' of~$\tilde\cE$ (\ie each irreducible component of $\supp\tilde\cE$ is taken with multiplicity equal to the length of the stalk of~$\tilde\cE$ at the generic point of that component).

It is clear that the divisor~$\tilde C$ is finite of degree~$N$ over~$C$.  We claim that such divisors are naturally parametrized by~$\rB$. Indeed, let $\pi\colon T^*C\to C$ be the projection, and let $s$ be the canonical section of~$\pi^*\omega_C$.  Then any $S$-point $b$ of~$\rB$ given by $(\tau_1,\dots,\tau_N)$ where $\tau_i\in\Gamma(C\x S,\omega_C^{\tsr i} \bx \O_S)$ defines a section~$t_b$ of $\pi^*\omega_C^{\tsr N} \bx\O_S$ by the formula
\[
    t_b = s^{\tsr N}\bx 1 + \sum_{i=1}^N (-1)^i (\pi\x\id_S)^*\tau_i\tsr (s^{\tsr N-i} \bx 1).
\]
The divisor~$\tilde C_b$ of zeroes of~$t_b$ is finite of degree~$N$ over $C\x S$, and it is easy to see that $b\mapsto \tilde C_b$ defines a one-to-one correspondence between maps $S\to\rB$ and divisors in $C\x S$ finite of degree~$N$ over~$S$.  Moreover, in the situation of the previous paragraph, the point $H(y)$ corresponds to the divisor~$\tilde C$: $\tilde C=\tilde C_{H(y)}$.  (This is essentially because the support divisor of~$\tilde\cE$ can be computed using the characteristic polynomial of~$a$.)  Also, we will need the universal spectral curve $\tilde\rC\ctd T^*C\x\rB \to C\x\rB$ (so that, in the above notation, $\tilde C_b = \tilde\rC\xx_\rB \{b\}$).

The Hitchin map is equivariant with respect to the natural actions of~$\Gm$: namely, the action on $\Higgs$ is defined by $\la\cdot(\cE,a)=(\cE,\la a)$ and the action on $\rB$ is given by
\begin{equation} \label{eq:Gm acts on B}
    \la\cdot(\tau_i)_{i=1}^N = (\la^i\tau_i)_{i=1}^N
\end{equation}
In terms of spectral curve~$\tilde C$, the latter action corresponds to dilation of~$\tilde C$ along the fibers of the map $T^*C \to C$.

We will be interested in the open subset of~$\rB$ parametrizing smooth spectral curves, that is, the maximal open subset $\bo\ctd\rB$ for which the map $\tilde\rC^\circ:={\tilde\rC\xx_\rB\bo}\to\bo$ is smooth.  One can show that $\bo$ is non-empty, and that fibers of $\tilde\rC^\circ\to\bo$ are irreducible (and smooth).  Denote also by $\Higgs^\circ$ the preimage of~$\bo$ under the Hitchin map~$H$:
\[
    \Higgs^\circ := {\Higgs} \xx_\rB \bo\xra{H^\circ}\bo    .
\]
We conclude this subsection by stating the following \begin{prop}[{cf.~\cite[Corollary~4.5]{BB}}]
There is a natural identification $\Higgs^\circ \isom \Pic(\tilde\rC^\circ/\bo)$.
\end{prop}
 \begin{proof}[Proof sketch]We explain the construction of the map $\hig\to\Pic(\tco/\bo)$: for an $S$-point~$y$ of $\Higgs^\circ$, one can define a line bundle on $\tilde C_{H(y)} := \tilde\rC\xx_{\rB,H\circ y}S$ as follows.  Let $(\cE,a)$ be the $S$-family of Higgs bundles corresponding to~$y$.  As discussed above, this is the same as a coherent sheaf~$\tilde\cE$ on $T^*C\x S$, and the support of this sheaf is~$\tilde C_{H(y)}$.  Moreover, since $H(y)\in\bo(S)$, the spectral curve $\tilde C_{H(y)}$ is smooth over~$S$, and $\tilde\cE$ must be the pushforward of a line bundle on~$\tilde C_{H(y)}$. This is the desired line bundle, which gives an $S$-point of $\Pic(\tilde\rC^\circ/\bo)$.
\end{proof}

\subsection{The $p$-Hitchin fibration}\label{p-hitchin}
In this subsection, we will present a description of the stack $\Loc=\Loc_N$ of de~Rham local systems of rank~$N$ on~$C$, analogous to the one given above for $\Higgs$.  Recall that by ``de~Rham local system'' we just mean a vector bundle with a flat connection (since $C$ is one-dimensional, all connections on it are automatically flat), so for a given test scheme~$S$, the groupoid $\Loc_N(S)$ is defined as that of rank~$N$ vector bundles on $C\x S$ equipped with a connection in the $C$-direction.

The construction of the $p$-Hitchin map is similar to that of the ordinary Hitchin map, but uses the notion of $p$-support, and so exists only in positive characteristic.  It is a map
$$\chi\colon \Loc\to\rB\fr$$
 defined as follows.  Suppose we are given an $S$-point of~$\Loc$ defined by an $S$-family of local systems, \ie a vector bundle~$\cE$ on $C\x S$ of rank~$N$ with a connection~$\na$ relative to~$S$.  We can think of~$(\cE,\na)$ as an $S$-family of $\D$-modules on~$C$; in particular, similarly to the Higgs field case, we can define its $p$-support with multiplicities---this is a divisor in $T^*C\fr\x S$ finite of degree~$N$ over $C\fr\x S$.  The corresponding $S$-point of~$\rB\fr$ is by definition the value of~$\chi$ on~$(\cE,\na)$.  Again, another way to define it is to apply the invariant polynomials to the $p$-curvature map $\curv_p(\na)\colon \cE\to\cE \tsr (\Fr_C^*\Om^1_{C\fr}\bx\O_S)$.

One can show that, \'etale locally over~$\rB\fr$, the $p$-Hitchin fibration $\chi\colon \Loc\to\rB\fr$ looks like the (Frobenius twisted) usual Hitchin fibration $H\fr\colon \Higgs\fr\to\rB\fr$ (see~\cite{Gr,CZ-NA Hodge}).  The identification of formal neighborhoods of fibers over a given point of~$\rB\fr$ can be constructed using a splitting of the Azumaya algebra~$\tilde\D_C$ on the formal neighborhood of the corresponding spectral curve.  Similarly, an \'etale local identification near a given point $b\in\rB\fr$ can be obtained from a splitting of the pullback of~$\tilde\D_C$ to $\tilde\rC\fr\xx_{\rB\fr}U$ where $U\to\rB\fr$ is an \'etale neighborhood of~$b$ in~$\rB\fr$.  (See \emph{ibid.}\ for the proof of the existence of such a splitting.)  The identification is canonical up to the action of a section of the group stack $\Pic(\tilde\rC\fr/\rB\fr)$ on $\Higgs\fr$.  

We will be mostly concerned with the part of~$\Loc$ lying over $(\bo)\fr$ which we will denote by
\[
    \Loc^\circ := \Loc \xx_{\rB\fr} (\bo)\fr \xra{\chi^\circ}(\bo)\fr.
\]
As explained above, we have an identification $\Higgs^\circ \isom \Pic(\tilde\rC^\circ/\bo)$, and hence $(\Higgs^\circ)\fr \isom \Pic((\tilde\rC^\circ)\fr/(\bo)\fr)$.  Moreover, these identifications are compatible with the action of the corresponding Picard stacks.  Therefore, from the results discussed in the previous paragraph, we see that the stack $\Loc^\circ$ carries a natural structure of $\Pic((\tilde\rC^\circ)\fr/(\bo)\fr)$-torsor.  This torsor can be described as that of fiberwise (along fibers of $(\tilde\rC^\circ)\fr \to (\bo)\fr$) splittings of the Azumaya algebra $(\pr^{\tilde\rC^\circ}_{T^*C})\fr^*\tilde\D_C$ where $\pr^{\tilde\rC^\circ}_{T^*C}$ is the natural projection $\tilde\rC^\circ \to T^*C$ (obtained by restriction from the projection $\tilde\rC \to T^*C$).

So we get
\begin{prop}\label{prop:loc=splittings}
  The stack $\loc$ is identified with the $\Pic((\tco)\fr/\bof)$-torsor of fiberwise splittings of $(\pr^{\tco}_{T^*C})\fr\dd_C$.
\end{prop}
\subsection{$\D$-modules on \texorpdfstring{$\Bun$}{Bun}, the Abel--Jacobi map, and Hecke functors}\label{subs:Dmod(Bun) AJ Hecke}
Now we apply the above results to the study of $\D$-modules on~$\Bun$.  According to Theorem~\ref{Dmod-stk equiv gerbe} (with $\L=\O$), $\D$-modules on~$\Bun$ are classified by a certain gerbe $\dd_{\Bun}$ on $\Higgs\fr = T^*\Bun\fr$.  The class of this gerbe on the smooth part $(\Higgs\sm)\fr$ of $\Higgs\fr$ (in particular, on~$(\Higgs^\circ)\fr$) corresponds to the canonical $1$-form $\th_{\Higgs}\fr$ on $(\Higgs\sm)\fr$ as on (the smooth part of) a cotangent bundle.

In~\cite{BB}, it is shown that the gerbe $\dd_{\Bun}|_{(\Higgs^\circ)\fr}$ has a canonical structure of a multiplicative gerbe with respect to the group structure on $(\Higgs^\circ)\fr \to (\bo)\fr$.  Moreover, it is also proved that the pullback of this gerbe by the Abel--Jacobi map is identified with the pullback of the gerbe $\dd_C$ by the map $(\tilde\rC^\circ)\fr \to T^*C\fr$.  One way to see both statements is on the level of the corresponding $1$-forms.  Namely, denote by $\th_{T^*C}$ the canonical $1$-form on~$T^*C$.  Then our statements about $\dd_{\Bun}$ follow from the fact that
\[
    \pr_1^*(\pr^{\tilde\rC^\circ}_{T^*C})^*\th_{T^*C} + \pr_2^*\th_{\Higgs} = m_{\AJ}^*\th_{\Higgs}
\]
where $\pr_1,\pr_2\colon \tilde\rC^\circ\xx_{\bo}\Higgs^\circ \to \Higgs^\circ$ are the projection maps, and $m_{\AJ}\colon \tilde\rC^\circ\xx_{\bo}\Higgs^\circ \to \Higgs^\circ$ is the composition of $\AJ_{\tilde\rC^\circ} \xx_{\bo} \id_{\Higgs^\circ}$ and  the group structure on $\Higgs^\circ \to \bo$.  Now, since multiplicative splittings of multiplicative gerbes on Picard stack of a curve correspond to (all) splittings of its pullback by Abel--Jacobi map (see~\ref{subs:Pic(C) case}), we get that the torsor of multiplicative splittings of $\dd_{\Bun}|_{(\Higgs^\circ)\fr}$ is identified with $\Loc^\circ$, which gives rise to the geometric Langlands equivalence of~\cite{BB}.

The above identity on $1$-forms follows, in turn, from an identification of the graph of~$m_{\AJ}$ with (the intersection of $\Higgs^\circ$ with) the conormal bundle to a certain substack $\rH_1$ of (or rather a stack mapping to) $C\x\Bun\x\Bun$.  The stack~$\rH_1$ classifies inclusions $\cE_1\into\cE_2$ of rank~$N$~bundles with cokernel of length~$1$.  When we pass from $1$-forms to gerbes, the identity gives a splitting of $\dd_{C\x\Bun\x\Bun}$ on the conormal bundle to~$\rH_1$, and the resulting functor $\D\hmod(\Bun)\to\D\hmod(C\x\Bun)$ is identified with the pull-push via the correspondence~$\rH_1$ (again, localized to $\Higgs^\circ$).  This pull-push functor is known as the \emph{Hecke functor} of geometric Langlands.

Using Proposition~\ref{prop:mult grb on Pic} and Proposition~\ref{prop:loc=splittings} we now get
the following statement, which is the main theorem of~\cite{BB}:
\begin{prop}[{\cite[Theorem~4.10]{BB}}]\label{prop: loc dual dbun}
  We have:
   \begin{enumerate}
     \item  The gerbe $\dd_{\Bun}^\circ:=\dd_{\Bun}|_{(\hig)\fr}$ has a structure of multiplicative gerbe on the group stack $(\hig)\fr\to\bof$.
     \item The dual torsor to $\ddbuo$ in the sense of Proposition~\ref{prop:dual gerbe on torsor} is identified with $\loc\to\bof$.
   \end{enumerate}
\end{prop}

%

\section{Proof of Theorem~\ref{main_thm}}\label{sec:proof}

\subsection{Outline of  the argument}
\label{subs:outline}

After introducing all the necessary tools in the previous sections, we are ready to give a proof of Theorem~\ref{main_thm}, which is what this section is devoted to.  So fix $c\in\k\setm\Fp$ and a smooth irreducible projective curve $C$ of genus $g(C)>1$.  Recall that in the statement of Theorem~\ref{main_thm} we are interested in the twisted $\D$-modules on $\Bun$ with the  twisting given by $\Ldet^c$ and $\Ldet^{-1/c}$.  Here $\Bun$ is the stack of rank~$N$ vector bundles on~$C$ and $\Ldet$ is the line bundle given by taking the determinant of derived global sections of a vector bundle.

By Theorem ~\ref{Dmod-stk equiv gerbe}, the category $\D\hmod_{\Ldet^c}$ is described by a $\Gm$-gerbe $\dd_{\Bun,\Ldet^c}$    on $(\tilde T^*_{\Ldet}\Bun)\fr$, which by Theorem~\ref{appx|main thm} is identified with $\Lochf\fr$ where $\Lochf$ is the moduli space of rank~$N$ bundles with $\ohf_C$-twisted connection.  We will make use of the identification
$$\Lochf\isoto\Loc$$
 given by twisting vector bundles by $\omega_C^{\tsr(p-1)/2}$.

 We let $\chi\fr$ be the Frobenius twist of the $p$-Hitchin map defined in~\ref{p-hitchin}
 and  we restrict $\dd_{\Bun,\Ldet^c}$ to the preimage $\loc\isoto\Lochf^\circ$ of
  $$B:=(\bo)\frr$$ under the map $\chi\fr\colon \Loc \fr \to \rB\frr$.  We will denote the resulting  gerbe by
$$\dd_c=\dd_{\Ldet^c}|_{(\Loc^\circ)\fr}\longto(\Loc^\circ)\fr.$$
Replacing $c$ by $1/c$, we define another gerbe $\dd_{1/c}$ on $(\Loc^\circ)\fr$.

Theorem~\ref{main_thm} now says   that there is an equivalence $$\dd_c\hmod \isoto\dd_{1/c}^{-1}\hmod$$
which is given by a splitting of the gerbe
$$(\dd_c\bx\dd_{1/c})\xx_{B\x B}\Delta_{c^p}(B)$$ (here $\bx$ is the Baer sum of pullbacks of $\Gm$-gerbes along two projections $(\loc)\fr\x(\loc)\fr\to(\loc)\fr$ and $\Delta_{c^p}\in B\x B$ is the graph of the action  map
$$[c^p]\colon B\to B$$
of $c^p\in\Gm(\k)$ on $B$, see~\eqref{eq:Gm acts on B}).

 \smallskip

The strategy of proof is as follows:   we apply Proposition~\ref{prop:trsr-grb duality data} to show that the categories on both sides are categories of coherent sheaves for dual gerbes on torsors, and then the equivalence follows from Proposition~\ref{prop:dual gerbe on torsor}. In more detail, recall from \ref{p-hitchin} that the stack $\Loc^\circ$ has a structure of torsor for the relative group stack $(\Higgs^\circ)\fr= \Pic(\tilde\rC^\circ/\bo)\fr$ and thus by Frobenius twist, $(\Loc^\circ)\fr$ is a torsor for $(\Higgs^\circ)\frr$.

To match the notation of section~\ref{sec:FM}, we take
\begin{align*}
 S= B &:= (\bo)\frr,\\  \rX=\rY&:=  \Pic(\tilde\rC^\circ/\bo)\frr =(\Higgs^\circ)\frr\xra{(H^\circ)\frr}B,\\  \tx&:=(\Loc^\circ)\fr\xra{(\chi^\circ)\fr} B,\\  \ty&:=(\Loc^\circ)\fr \xra{[c^p]\circ (\chi^\circ) \fr} B,\\
  \gtx&=\dd_c=\dd_{\Bun,\Ldet^c}|_{(\Loc^\circ)\fr}\longto(\Loc^\circ)\fr=\tx,\\  \gty&=\dd_{1/c}=\dd_{\Bun,\Ldet^{1/c}}|_{(\Loc^\circ)\fr}\longto (\Loc^\circ)\fr=\ty
 .
\end{align*}

We will also need ``Frobenius untwists'' (over~$\k$) of $\tx,\ty$, so for a scalar $c\in\k^\x$ we denote by
 \[\rT_c \to (\bo)\fr
 \]the torsor over the group scheme $(\Higgs^\circ)\fr\to(\bo)\fr$ which is obtained by pullback from the standard torsor $\Loc^\circ \to (\bo)\fr$ under the action of $c\in\k^\x$ on $(\bo)\fr$.  Then we can identify
 \[ \tx = \rT_1\fr,\quad    \ty = \rT_c\fr.\]
(Note that $[c^p]$ becomes $[c]$ after untwisting.)

Now, to apply Proposition~\ref{prop:trsr-grb duality data}, we need to have multiplicative gerbes $\gx$~and~$\gy$ on $\rX$~and~$\rY$ and their action on $\gtx$ and $\gty$, respectively.  For $\gy$ we take the restriction of the gerbe $\dd_{\Bun\fr}$ to
\[\rY = (\Higgs^\circ)\frr \ctd \Higgs\frr = T^*\Bun\frr,
\] and similarly we take $\gx$ to be the restriction of $[c^p]^*\dd_{\Bun\fr}$ to the same subset (but now identified with~$\rX$):
\[
    \gy=\dd_{\Bun\fr}|_\rY,\quad\gx=[c^p]^*\gy  .
\]where $[c^p]$ is now
the action on $(\hig)\fr$.

The duality between $\gy$ and $\tx$ (as required in the data~(\ref{idfn trsrs w duals to grbs}) from Proposition~\ref{prop:trsr-grb duality data}) is essentially the result of~ \cite{BB} and it  follows from Proposition~\ref{prop: loc dual dbun}.  We see also that $\gx$ is dual to $\ty$ since these are obtained from $\gy$ and $\tx$ by pullback under $[c^p]\colon B\to B$.  We begin our proof by first constructing an action of $\gx$ on $\gtx$ and of $\gy$ on  $\gty$ which is done in Sect.~\ref{sec:qhecke}. This reduces to an equality (Proposition~\ref{prop A}) on the generalized one-form $$\tilde\th=\a_{\Ldet}=\tcurv\Lna_{\det}$$ where $\Lna_{\det}:=(\Ldet',\na_{\det})$ is the pullback of  $\Ldet$ to its twisted cotangent, identified with $\Lochf\isoto\Loc$ as above.

 By Remark~\ref{rem: data for dty up to gerb on base}, this gives the desired equivalence up to twisting by $\Gm$-gerbe on the base.  In other words, the splitting $\cQ$  of  $\gtx\bx_B\gty$ from Proposition~\ref{prop:trsr-grb duality data} is defined locally  on~$B$ up to a line bundle pulled back from ~$B$.  It also gives rise  to a quantum version of Hecke functors.

In order to split this ``difference'' gerbe on $B$, we give an explicit formula for $\cQ$ in \ref{subs:poincare}.  Since by Corollary~\ref{cor DLc eqv GcaL}  the gerbes $\gtx,\gty$ are of the form $\dd_\a$ for a generalized $1$-form~$\a$, the splittings correspond to line bundles with connection of a certain extended curvature.

This line bundle with connection is constructed in a certain way from $\Lna_{\det}$ (formula~\eqref{sfb}), and we have to check that the connection has the right extended curvature.  We show that if it does, then we can also construct the other data in Proposition~\ref{prop:trsr-grb duality data}.

Unlike the equality~\eqref{tth id} of Proposition~\ref{prop A}, the necessary equation~\eqref{eq:hard tth id} for the construction of~$\cQ$ involves a linear combination of pullbacks of~$\tilde\th$ with non-integer coefficients, so it cannot be expressed as an equation on the extended curvature of a connection.  This equation involves certain compatibility of $\tilde\th$ with the addition map
\begin{equation}\label{eq:a}
 a\colon \rT_1 \xx_{(\bo)\fr} \rT_{c} \ \longto \ \rT_{1+c}
\end{equation}
and is proved in~\ref{subs:alt constr} and \ref{subs:proof Jer} by constructing a representative one-form $\th_0\in\delta^{-1}(\tilde\th)$ on $\Loc^\circ$.

The form $\th_0$ is constructed from its symplectic dual vector field $\xi_0$. This vector field is constructed by lifting the  action of the Lie algebra of infinitesimal dilations from $T^*C\fr$ to the gerbe $\dd_C$.   In \ref{subs:alt constr} we only prove the required equality $\tth=\delta(\th_0)$  up to a summand of the form $(\chi^\circ)^*\sP(\beta_0)$ for $\b_0\in\eF(\bo)$ (we actually show it extends to all of $\rB$), as well as an analog of \eqref{eq:hard tth id}
for $\th_0$.  The proof of the equality $\b_0=0$ requires an additional degree estimate on $\th_0$ along the fibers of $\Loc\to\Bun$ done in the final subsection~\ref{subs:proof Jer}.



\subsection{Quantum Hecke functors}\label{sec:qhecke}

In this subsection we will prove the following
\begin{prop}
Denote $(\ty',\gy',\gty')$ be the dual data to $(\tx,\gx,\gtx)$ as described in Proposition~\ref{prop:dual gerbe on torsor}.
  There is  an action of $\gx$ on $\gtx$ and of $\gy$ on $\gty$.
  Together with the isomorphisms $\ty\isoto\ty'$ and $\gy\isoto\gy'$, this implies by Remark~\ref{rem: data for dty up to gerb on base} that there is a $\Gm$-gerbe
  $\g_B$ on $B$ such that we have
  $$\gty\isoto\gty'\cdot(\chi^\circ)\fr^*\g_B.$$
\end{prop}
Below we construct an action of $\gx$ on $\gtx$.
The action of $\gy$ on $\gty$ is constructed similarly.

According to Proposition~\ref{prop:Morita eqv stk} and Corollary~\ref{cor DLc eqv GcaL}, we have
\begin{equation}\label{gtx}\gtx\sim\dd_{\Loc^\circ,\Ldet'^c,\na}
\sim\dd_{\loc,c\tth}
 \end{equation}
 where $\Ldet'$ is the pullback of the determinant line bundle under $\Loc^\circ\to\Bun$, and $\na$ is the canonical (``universal'') connection on this pullback. One can also see from Corollary~\ref{cor untw} that
\begin{equation}\label{gx}\gx \sim \dd_{(\Higgs^\circ)\fr,c^p\th\frr}
\sim\dd_{(\hig)\fr,\delta(c\th\fr)}
 \end{equation}
 where we denote by $\th=\th_{\Higgs}|_{\hig}$ the restriction of the  canonical $1$-form on the smooth part of~$\Higgs$ coming from the identification $\Higgs\isoto T^*\Bun$.

Denote by~$\Gamma$ the graph of action of $(\Higgs^\circ)\fr$ on~$\Loc^\circ$. It comes equipped with three projections: $$\prg1\colon \Gamma\to(\Higgs^\circ)\fr,\quad  \prg2,\prg3\colon \Gamma\to\Loc^\circ.$$  (Note that the graph of the action of $\rX$ on~$\tx$ is identified with $\Gamma\fr$.)  We are going to use the following proposition:

\begin{prop} \label{prop A}
The generalized $1$-form $\tilde\th$ satisfies the following identity:
\begin{equation}\label{tth id}          \ncmd{\emspace}{\hspace{1em}}
  \prg2^*\tilde\th - \prg3^*\tilde\th =     \delta(\prg1^*\th\fr)
\end{equation}
\end{prop}

Now it is easy to see that Proposition~\ref{prop A} allows to construct an action of~$\gx$ on~$\gtx$.
Indeed, multiplying both sides of~\eqref{tth id} by $c$ and applying the construction $\b\mapsto\dd_\b$
of Sect.~\ref{extcrv} for $\b\in\eF_\Gamma$,
we get an equivalence of gerbes
\[\prg1\fr^*\gx\cdot\prg2\fr^*\gtx \sim \prg3\fr^*\gtx\]
where we used the expressions \eqref{gx}~and~\eqref{gtx}
for the gerbes $\gx,\gtx$.
This gives the desired action of $\gx$ on $\gtx$  (up to  compatibility isomorphisms, which follow from the corresponding equalities of generalized $1$-forms on  multiple products,
or rather from the fact that $\a\mapsto\dd_\a$ is map of sheaves of symmetric monoidal 2-groupoids from $\eF_\Gamma$ (considered as a \emph{discrete} 2-groupoid) to the sheaf of 2-groupoids of $\Gm$-gerbes on $\Gamma\fr$).
\begin{rem}\label{rem:q-Hecke}
  The existence of natural actions of~$\gx$ on~$\gtx$ and of~$\gy$ on~$\gty$ has two consequences.  First, if we restrict (say) the first action to the image of the Abel--Jacobi map  $\AJ\frr\colon (\tco)\frr\to \Pic((\tco)\frr/B)\isoto \rX$, the resulting splitting of $\pr_1^*\AJ\frr^*\gx^{-1}\tsr\pr_2^*\gtx^{-1}\tsr\pr_3^*\gtx$  gives an $\O_{C\fr}\bx\Ldet^c\bx\Ldet^{-c}$-twisted $\D$-module on $C\fr\x\Bun\x\Bun$.  There is a similar construction for non-twisted $\D$-modules giving a $\D$-module on $C\x\Bun\x\Bun$, and this $\D$-module coincide with (the localization to $(\Higgs^\circ)\fr$ of) the one defining the Hecke functors (see~\ref{subs:Dmod(Bun) AJ Hecke}).  Therefore it is natural to use the twisted $\D$-module on $C\fr\x\Bun\x\Bun$ just described, to define a functor $\D\hmod_{\O_{C\fr}\bx\Ldet^c} (C\fr\x\Bun) \to \D\hmod_{\Ldet^c}(\Bun)$ which could be called a \emph{quantum ($p$\nbhp-)Hecke functor}.  In fact, just like the usual Hecke functors, this functor can be constructed from a certain $G(\k\dbkts t)$-equivariant twisted $\D$-module on $\Gr_G$ (the affine Grassmannian for~$G$).  We note that in characteristic~$0$ the category of such twisted $\D$-modules is trivial for irrational~$c$, but not necessarily for $c\in\QQ$, in which case it was described by M.~Finkelberg and S.~Lysenko in~\cite{FL}.

  Second, from the Remark~\ref{rem: data for dty up to gerb on base} we see that  the existence of these two actions already allows to define the desired equivalence $\gtx\hmod \isoto \gty\hmod$ locally over~$B$ up to twisting by a line bundle pulled back from~$B$.  Under this equivalence, the $\D$-modules corresponding to ``skyscraper sheaves'' on the corresponding gerbes\footnote{Since $(\Loc^\circ)\fr$ is a stack rather than a scheme, one should be careful about what is meant by ``skyscraper'' here (basically, these are pushforwards of splittings of the gerbe on points), but we won't go into details.} go to eigen-$\D$-modules with respect to the quantum $p$-Hecke functors described above.
\end{rem}

\bigskip

Below we will need an explicit description of the action of $\gx$ on $\gtx$ by a line bundle with connection on $\rT_1\xx_{\bof}\rT_c$.

From \eqref{gx} and \eqref{gtx} we see that the gerbe
$$\prg2\fr^*\gtx\tsr\prg3\fr^*\gtx^{-1}\tsr\prg1\fr^*\gx^{-1}$$ on~$\Gamma$ that we need to split is equivalent to $$\dd_{c\,\prg2^*\tilde\th-c\,\prg3^*\tilde\th-\prg1^*\sP(c^p\th_{\Higgs}\frr)}.$$

Now, Proposition~\ref{prop A} implies that
$$c\,\prg2^*\tilde\th - c\,\prg3^*\tilde\th - \prg1^*\sP(c^p\th_{\Higgs}\frr) = c\delta(\prg1^*\th_{\Higgs}\fr) - c^p\sP(\prg1^*\th_{\Higgs}\frr)= -\kappa(c\,\prg1^*\th_{\Higgs}\fr).$$
Therefore $(\O_\Gamma,d-c\,\prg1^*\th_{\Higgs}\fr)$ gives the desired splitting.  The compatibility isomorphism on $\Loc^\circ\xx_{\rB\fr}\loc\xx_{\rB\fr}(\Higgs^\circ)\fr$ is given by the identity morphism $\id_\O$ on the corresponding line bundles with connections.  Compatibility of this isomorphism with connections also follows from~\eqref{tth id} and the cocycle condition is evident.

\bigskip Now we prove Proposition~\ref{prop A}.  It is easy to see that  it is equivalent to the following statement:
\begin{prop}\label{prop A'}
The line bundle with connection
\[
    (\cS,\na_\cS):= \prg1^*(\O,d+\th_{\Higgs}\fr)\tsr \prg2^*(\Ldet',\na)\tsr \prg3^*(\Ldet',\na)^{\tsr-1}
\]
on~$\Gamma$ is flat and has $p$-curvature $\prg3^*\th_{\Higgs}\frr$.
\end{prop}

For further reference, also define $\na_\cS'=\na_\cS-\prg1^*\th_{\Higgs}\fr$ so that
\[
   (\cS,\na_\cS'):= \prg2^*(\Ldet',\na)\tsr \prg3^*(\Ldet',\na)^{\tsr-1}.
\]

In terms of $\na_\cS'$, Proposition~\ref{prop A'} amounts to the following identity:
\begin{equation}\label{tth id2}          \ncmd{\emspace}{\hspace{1em}}
  \tcurv(\na_\cS')=
   \sP(\prg1^*\th_{\Higgs}\frr) - \kappa(\prg1^*\th_{\Higgs}\fr) = \delta(\prg1^*\th_{\Higgs}\fr)
\end{equation}
where $\sP,\kappa,\delta$ are defined in~\ref{extcrv}.

\ncmd{\GAJ}{\Gamma\!_{\AJ}}

\begin{proof}[Proof of Proposition~\ref{prop A'}]
  By an argument similar to \cite[Section~4.14]{BB}, it is enough to prove that the restriction of the two sides of~\eqref{tth id} to $$\GAJ:=(\tilde\rC^\circ)\fr\xx_{\rB\fr}\Loc^\circ \xra{\AJ\fr\x\id} (\Higgs^\circ)\fr\xx_{\rB\fr}\Loc^\circ \isoto\Gamma$$ are equal.  (Here $\AJ$ is the Abel--Jacobi map $\tilde\rC^\circ \to \Higgs^\circ = \Pic(\tilde\rC^\circ/\bo)$.)  In other words, we need to prove that the restriction of the line bundle with connection $(\cS,\na_\cS)$ from Proposition~\ref{prop A'} to $\GAJ\to\Gamma$ is flat and has $p$-curvature
  \[
    \pr_1\frr^*\AJ_{\tilde\rC^\circ/\bo}\frr^* \th_{\Higgs}\frr = \pr_1\frr^*(\pr^{\tilde\rC^\circ}_{T^*C})\frr^* \th_{T^*C}\frr,
  \]
  where $\th_{T^*C}$ is the canonical form on $T^*C$ (see~\ref{subs:Dmod(Bun) AJ Hecke} and \cite[Sect.~4.16]{BB} for the proof of the equality $\AJ_{\tilde\rC^\circ/\bo}^* \th_{\Higgs} = (\pr^{\tilde\rC^\circ}_{T^*C})^* \th_{T^*C}$).
  This will follow from an alternative description of $(\cS,\na_\cS)$. Namely, we have an equivalence $\D_C\hmod \isoto \D_{T^*C,\th_{T^*C}}\hmod$. Moreover, the ``in-families'' version of this is true. So if $\rZ$ is any stack then the category of ``$\rZ\fr$-families of $\D_C$-modules'' (\ie quasi-coherent sheaves on $C\x\rZ\fr$ equipped with a connection along $C$) is equivalent to the category of $\D_{T^*C\x\rZ,\pr_1^*\th_{T^*C}}$-modules. If we replace $C$ by $C\fr$ and $\rZ$ by $\Loc^\circ$ then we have the universal bundle with connection (along $C\fr$) on $C\fr\x(\Loc^\circ)\fr$, so applying the equivalence gives a $\D$-module on $T^*C\fr\x \Loc^\circ$ with $p$-curvature $\pr_1^*\th_{T^*C}\fr$. It is supported on $(\tilde\rC^\circ)\fr\xx_{(\bo)\fr} \Loc^\circ\isom\GAJ$ and therefore corresponds to a $\D_{\GAJ,\pr_1^*\th_{T^*C}\frr}$-module which we denote by $\Lna_{\on{univ},\GAJ}$.

  \begin{lem} \label{chdet}
  The restriction to $\GAJ$ of the line bundle with connection $(\cS,\na_\cS)$ from Proposition~\ref{prop A'} is isomorphic to $\Lna_{\on{univ},\GAJ}$ where we identify $\Loc$ with $\Lochf$ (and thus $\loc$ with $\Lochf^\circ$) via twisting by $\omega_C^{(p-1)/2}$.
  \end{lem}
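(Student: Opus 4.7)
The strategy is to recognize both $\cS|_{\GAJ}$ and $\Lna_{\on{univ}}$ as incarnations of the ``universal line bundle with connection on the spectral curve,'' the difference being that $\Lna_{\on{univ}}$ is obtained via the Azumaya-algebra equivalence $\D_{C\fr}\hmod\sim\D_{T^*C\fr,\th\fr}\hmod$, which introduces a $\th\fr$-shift precisely compensated by the third tensor factor in the definition of $\cS$.  The key computational tool is Proposition~\ref{conn det}, applied to the short exact sequence produced by twisting a local system by a point of its spectral curve.

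First I would unwind the geometry of $\GAJ$.  A point of $\GAJ$ is a triple $(\cF,\cF',\tilde p)$ where $\cF\in\Lochf^0$ is a local system, $\tilde p$ is a point of its $p$-spectral curve $\tilde\rC^0_\cF\subset T^*C\fr$, and $\cF'\in\Lochf^0$ is obtained from $\cF$ by the torsor action of the Abel--Jacobi image of $\tilde p$.  Under the spectral correspondence, $\cF$ and $\cF'$ correspond to line bundles $\tilde\cL$ and $\tilde\cL(\tilde p)$ on the spectral curve.  On $C$ itself, after the $\omega_C^{(p-1)/2}$-identification between $\Loc$ and $\Lochf^0$, we get the short exact sequence of sheaves with $\omega^{1/2}$-relative connection
\[
    0\longto\cF\longto\cF'\longto\tilde\cL(\tilde p)\big|_{\tilde p}\longto 0,
\]
whose quotient is a skyscraper supported at $\pi(\tilde p)$ with one-dimensional fiber.

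Next, Proposition~\ref{conn det}(2) applied to this sequence yields a canonical isomorphism
\[
    \det R\pi_*\cF'\isom \det R\pi_*\cF\tsr \tilde\cL(\tilde p)\big|_{\tilde p}
\]
compatible with the canonical connections produced by Proposition~\ref{conn det}(1).  This identifies $\pr_1^*(\Ldet,\na)\tsr \pr_2^*(\Ldet,\na)^{\tsr-1}\big|_{\GAJ}$ with the fiber of $\tilde\cL$ at $\tilde p$, equipped with the connection that records variation in both $\cF$ and $\tilde p$.  Under the family version of the spectral correspondence, this fiber-with-connection is precisely the restriction of the universal bundle-with-connection on $C\fr\x(\Loc^0)\fr$ to the graph of the spectral point.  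The remaining tensor factor $\pr_3^*(\O,d+\th\fr)$ then supplies exactly the $\th\fr$-shift of the connection that distinguishes this universal $\D_{C\fr}$-module (as a sheaf on $C\fr\x(\Loc^0)\fr$) from its image under the equivalence $\D_{C\fr}\hmod\sim\D_{T^*C\fr,\th\fr}\hmod$, i.e.\ from the $\D_{T^*C\fr\x\Loc^0,\pr_1^*\th\fr}$-module defining $\Lna_{\on{univ}}$.  Combining these identifications yields the claimed isomorphism.

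The main obstacle will be a careful bookkeeping of Frobenius and $\omega$-twists: in particular, verifying that the $\omega_C^{(p-1)/2}$ identification $\Loc\isom\Lochf^0$ interacts correctly with the $\omega^{1/2}$-twist in Proposition~\ref{conn det}, and that the connection on $\tilde\cL(\tilde p)|_{\tilde p}$ produced by Proposition~\ref{conn det}(2), together with the $\th\fr$-shift from $\pr_3^*(\O,d+\th\fr)$, matches on the nose the connection on $\Lna_{\on{univ}}$ coming from the universal $C\fr$-connection and the Azumaya splitting.  The auxiliary Lemma~\ref{AJ} is not used here but will be invoked afterwards to propagate the matching from~$\GAJ$ to all of~$\Gamma$.
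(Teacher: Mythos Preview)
Your overall strategy matches the paper's: use the short exact sequence of $\D_{C,\omega^{1/2}}$-modules coming from the Abel--Jacobi twist, apply multiplicativity of $\det R\pi_*$ (Proposition~\ref{conn det}), and then match with $\cL_{\on{univ}}$. However, there is a genuine error in your computation of the quotient.

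The spectral curve $\tilde\rC^0_\cF$ lives in $T^*C\fr$, so $\tilde p\in T^*C\fr$ and its image lies in $C\fr$, not in $C$. The cokernel of $\cF\hookrightarrow\cF'$ on $C$ is the irreducible $\D_{C,\omega^{1/2}}$-module $\delta_{(x,\xi)}$ supported on the Frobenius fiber over that point; as an $\O_C$-module it has length~$p$, not~$1$. Consequently your formula $\det R\pi_*\cF'\isom \det R\pi_*\cF\tsr \tilde\cL(\tilde p)|_{\tilde p}$ is off by a $p$-th power and an $\omega$-twist: the paper computes, via the filtration with $\gr\cong\cF_{\bar x}\tsr\bigoplus_{i=0}^{p-1}\omega_{C,\bar x}^{\tsr i}$, that $\det\RGam(\text{quotient})=\cF_{\bar x}^{\tsr p}\tsr\omega_{\bar x}^{\tsr p(p-1)/2}$. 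This $p$-th power is exactly what is needed, since $\cL_{\on{univ}}$ is built from the \emph{Frobenius twist} of the universal object and hence its fiber is $(\cL_1)_{(\bar x,\bar\xi)}^{\tsr p}$; and the $\omega^{p(p-1)/2}$ factor is what the $\omega_C^{(p-1)/2}$-identification absorbs. Your one-dimensional claim would give the wrong line bundle altogether.

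There is a second, smaller gap. Even after fixing the above, the pointwise computation only identifies $\cS$ with $\cL_{\on{univ}}$ after pullback to $\GAJ\times_{C\fr}C$ (since you are working at a lift $(\bar x,\bar\xi)$ of $(x,\xi)$). To descend to $\GAJ$ itself the paper invokes a separate statement (Claim~\ref{detFr}) computing $\det R\phi_*\cL$ for the relative Frobenius~$\phi$, which yields the required $\cL\fr[S]\tsr(\Omega^1)^{\tsr(p-1)/2}$. Finally, the compatibility with connections is not a formal consequence of Proposition~\ref{conn det}(2) alone: the paper uses a further claim (the precursor to Proposition~\ref{appx|prop:det on I}) giving the precise relation $(\cL',\na_{\det})\isom(\cL'',\na_w-\tilde s^*\th)$, which is where the $\th\fr$-shift you mention actually enters.
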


  \begin{proof}
    We will show that the lemma follows directly from Proposition~\ref{appx|prop:det on I}.  Indeed, first note that $\Lna_{\on{univ},\GAJ}$ is isomorphic to the pullback of the bundle $\Lna_{\on{univ}}$ defined in~\ref{appx|subs:charp} under certain map $e\colon \GAJ\to \rI$.  (See ibid.\ for the definition of~$\rI$.)  Namely, from the definition of $\GAJ$ it is easy to see that a point $\gamma\in\GAJ$ corresponds to a pair of rank~$N$ bundles with $\ohf$-connections $(\cE_1,\na_1)$, $(\cE_2,\na_2)$ that fit into a short exact sequence of $\D_{C,\ohf}$-modules
    \begin{equation}\label{EEF}
       0\to \cE_2\to \cE_1\to \cF\to 0
    \end{equation}
    where $\cF\isom\delta_{x,\xi}$ is an irreducible $\D_{C,\ohf}$-module with $p$-support at a point $(x,\xi)\in T^*C\fr$.  Let $b_\cF$ be the point of the stack~$\rI$ corresponding to~$\cF$.  Then $e$ is defined by setting $e(\gamma) = b_\cF$.  The isomorphism
    \[
        \Lna_{\on{univ},\GAJ} \isom e^*\Lna_{\on{univ}}
    \]
    is easy from comparison of constructions of the two line bundles with connection.  On the other hand, the projections $\prg i|_{\GAJ}\colon \GAJ\toto\Lochf \ctd \Connhf$ ($i=2,3$) correspond to the bundles $\cE_1,\cE_2$.  So we can apply Lemma~\ref{appx|lem:Ldet ses-compat} to the triple $(\prg2|_{\GAJ},\prg3|_{\GAJ},e)$, we find that
    \[
        \begin{split}
          e^*\Lna_{\det} &\isom (\prg2|_{\GAJ})^*\Lna_{\det} \tsr (\prg3|_{\GAJ})^*\Lna_{\det}^{-1} \\
                         &= (\cS,\na_\cS)|_{\GAJ} \tsr (\O,d-\prg1^*(\pr^{\tilde\rC^\circ}_{T^*C})\fr^* \th_{T^*C}\fr).
        \end{split}
    \]
    Combining this with \eqref{appx|eq:det on I} finishes the proof.
  \end{proof}

  As explained above, to prove Proposition~\ref{prop A} it is enough to prove~\eqref{tth id} on~$\GAJ$, which follows from Lemma~\ref{chdet} by computing extended curvature.
\end{proof}

\subsection{Construction of the Poincar\'e bundle}\label{subs:poincare}

Now, to finish the proof of Theorem~\ref{main_thm}, we need to provide a splitting $\cQ$ of the gerbe $\pr_1^*\gtx\cdot\pr_2^*\gty$ on $\tx\x_B\ty$ and a pair of isomorphisms as indicated in points (\ref{splitting Q})--(\ref{isoms aX aY}) of Proposition~\ref{prop:trsr-grb duality data}.  Corollary~\ref{cor DLc eqv GcaL} shows that $\gtx \sim \dd_{\rT_1, c\tilde\th}$ and $\gty \sim \dd_{\rT_c, c^{-1}\tilde\th}$, so we get
\[
    \pr_1^*\gtx\cdot\pr_2^*\gty \sim \dd_{\rT_1\xx_{(\bo)\fr}\rT_c, c\,\pr_1^*\tilde\th + c^{-1}\pr_2^*\tilde\th},
\]
hence our $\cQ$ must correspond to a line bundle $\Lna_{\ker}$ with connection on $\rT_1 \xx_{(\bo)\fr} \rT_{c}$ satisfying
\begin{equation}
    \label{need to check}
\tcurv(\Lna_{\ker}) 
                    = c\,\pr_1^*\tilde\th + c^{-1}\pr_2^*\tilde\th.
\end{equation}

The sought-for bundle with connection will be given by
\begin{equation} \label{sfb}
    \Lna_{\on{ker}} := a^*(\Ldet',\na_{\det})\tsr \pr_1^*(\Ldet'^{\tsr-1},\na_{\det}^*)\tsr \pr_2^*(\Ldet'^{\tsr-1},\na_{\det}^*)
\end{equation}
where $(\Ldet',\na_{\det})$ is the universal line bundle with connection on $\Loc$, and $a$ is the ``addition'' map $$a\colon \rT_1 \xx_{(\bo)\fr} \rT_{c} \ \longto \ \rT_{1+c}$$ (one can check that the torsor $\rT_{1+c}$ is the sum of the torsors $\rT_{1}$ and $\rT_{c}$). 
Substituting \eqref{sfb} in~\eqref{need to check} yields
\begin{equation}
\begin{split}
    (\ref{need to check}.\rm{LHS})-(\ref{need to check}.\rm{RHS}) &= \tcurv(\Lna_{\ker}) - c\cdot \pr_1^*\tilde\th - c^{-1}\cdot \pr_2^*\tilde\th \\
    &= a^*\tilde\th - \pr_1^*\tilde\th - \pr_2^*\tilde\th  - c\cdot \pr_1^*\tilde\th - c^{-1}\cdot \pr_2^*\tilde\th \\
    &= a^*\tilde\th - (1+c)\cdot \pr_1^*\tilde\th - (1+c^{-1})\cdot \pr_2^*\tilde\th \,.
\end{split}
\end{equation}
So we need to show that
\begin{equation}\label{eq:hard tth id}
    (1+c)^{-1}a^*\tilde\th = \pr_1^*\tilde\th + c^{-1}\pr_2^*\tilde\th \,.
\end{equation}

To prove formula~\eqref{eq:hard tth id}, we proceed as follows. Let $\tilde\a = (1+c)^{-1}a^*\tilde\th - \pr_1^*\tilde\th - c^{-1}\pr_2^*\tilde\th$; we want to prove $\tilde\a=0$. Consider the two projections
\[
    \pr_{1,3},\pr_{2,3}\colon \rT_1\xx_{(\bo)\fr} \rT_1\xx_{(\bo)\fr} \rT_c\rightrightarrows \rT_1\xx_{(\bo)\fr} \rT_c.
\]
As a first step, we prove that the difference between two pullbacks $\pr_{1,3}^*\tilde\a - \pr_{2,3}^*\tilde\a = 0$. Let $\pr_i'\ (i=1,2,3)$ be the projection from $\rT_1\xx_{(\bo)\fr} \rT_1\xx_{(\bo)\fr} \rT_c$ to the $i$'th factor, and $a_{i,3}=a\circ\pr_{i,3}\ (i=1,2)$. We also have a ``difference'' map $s\colon \rT_1\xx_{(\bo)\fr}\rT_1 \to(\Higgs^\circ)\fr$; denote $s_{1,2}=s\circ\pr_{1,2}$. Now we calculate
\begin{equation}\label{pra-pra}
    \begin{split}
     \pr_{1,3}^*\tilde\a - \pr_{2,3}^*\tilde\a &= (1+c)^{-1}a_{1,3}^*\tilde\th - \pr_1'^*\tilde\th - c^{-1}\pr_3'^*\tilde\th \\
                                            &\quad-[(1+c)^{-1}a_{2,3}^*\tilde\th - \pr_2'^*\tilde\th - c^{-1}\pr_3'^*\tilde\th]\\
                                               &= (1+c)^{-1}(a_{1,3}^*\tilde\th - a_{2,3}^*\tilde\th) - (\pr_1'^*\tilde\th - \pr_2'^*\tilde\th) \\
                                               &= (1+c)^{-1}\delta\bigl(s_{1,2}^*((1+c)\th\fr)\bigr)  - \delta(s_{1,2}^*\th\fr) = 0 \,,
     \end{split}
\end{equation}
where in the last line we used formula~\eqref{tth id}.

The formula~\eqref{pra-pra} implies that $\tilde\a=\pr_2^*\phtr\tilde\a'$ for some $\phtr\tilde\a'\in\Gamma(\rT_c,\eF_{\rT_c})$. Similarly, we can show that $\tilde\a=\pr_1^*\phtr\tilde\a''$ for $\phtr\tilde\a''\in\Gamma(\rT_1,\eF_{\rT_1})$. It follows that $\tilde\a=(\chi^\circ \xx_{\bo} \chi^\circ)\fr^*\tilde\b$ for some $\tilde\b\in\Gamma(\bof,\eF_{\bof})$. So we need to show that $\tilde\b=0$, which we prove in \ref{subs:alt constr}.

\subsection{Construction of isomorphisms $\a_\rX,\a_\rY$}
We will now sketch the construction of the isomorphisms $\a_\rX,\a_\rY$ required by Proposition~\ref{prop:trsr-grb duality data} (modulo \eqref{eq:hard tth id}).
In other words, we need to check that $\cQ$ looks like a Poincar\'e line bundle.  The construction is based on the formula ~\eqref{sfb} which itself looks like the formula for the Poincar\'e bundle on $\Pic(\tilde\rC^\circ/\rB^\circ)$.  The argument in this subsection is a rather technical calculation and the reader is advised to skip it at the first reading.

Let us construct $\a_\rX$.  According to the formulation of Proposition~\ref{prop:trsr-grb duality data}, $\a_\rX$ should be an isomorphism of splittings of certain $\Gm$-gerbe on $\rX\x_B\tx\x_B\ty$.  The gerbe in
question is $$\gx\bx_B\gtx\bx_B\gty$$
 which corresponds to the generalized 1-form $$\pr_1^*\sP(\th\fr_{\Higgs})+\pr_2^*\tilde\th + c\,\pr_3^*\tilde\th,$$ and the splittings are given by line bundles with connection $$\pr_{1,3}^*\Lna_{\text{BB}} \tsr \pr_{2,3}^*\Lna_{\on{ker}},\quad \text{and}\quad (\O,d+c\,\pr_1^*\th_{\Higgs}\fr)\tsr(a_{\rT_1}\xx_{(\bo)\fr}\id_{\rT_c})^*\Lna_{\on{ker}}.$$
 Here $\Lna_{\on{BB}}$ is the line bundle with flat connection on $(\Higgs^\circ)\fr\xx_{(\bo)\fr}\Loc^\circ$ with $p$-curvature $\pr_1^*\th_{\Higgs}\frr$ which gives the equivalence $\dd_{(\Higgs^\circ)\fr,\th_{\Higgs}\frr}\hmod  \isoto  \on{QCoh}((\Loc^\circ)\fr)$ which is essentially the equivalence of~\cite{BB} for $C\fr$.

Let $\tilde\cK$ be the ``difference'' between our two splittings: it's a line bundle on $\rX\x_B\tx\x_B\ty$, and the data of $\a_\rX$ amounts a trivialization of~$\tilde\cK$.  The ``difference'' (or ``ratio'') of the corresponding line bundles with connection is then $(\cK,\na_\cK)$  on  $(\Higgs^\circ)\fr\xx_{(\bo)\fr}\rT_1\xx_{(\bo)\fr}\rT_c$ with $\cK=\Fr^*\tilde\cK$ and $\na_\cK$ being the canonical connection on Frobenius pullback.  Substituting formula~\eqref{sfb} for $\Lna_{\ker}$, we get the following formula for $(\cK,\na_\cK)$:
\begin{equation}\label{eq:l.bundle K}
  \begin{split}
    (\cK,\na_\cK) &= (a_{\rT_1}\xx_{(\bo)\fr}\id_{\rT_c})^* \Lna_{\on{ker}} \tsr \pr_{2,3}^*\Lna_{\on{ker}}^{\tsr-1} \tsr \pr_{1,3}^*\Lna_{\on{BB}} \tsr (\cO,d-\pr_1^*\th\fr)  \\
              &= (\id_{(\Higgs^\circ)\fr}\xx_{(\bo)\fr}a)^* (\cS,\na_\cS')_{1+c} \tsr \pr_{1,3}^*(\cS,\na_\cS')_c \tsr \pr_{1,3}^*\Lna_{\on{BB}} \tsr (\cO,d-\pr_1^*\th\fr)  \\
              &= (\id_{(\Higgs^\circ)\fr}\xx_{(\bo)\fr}a)^* (\cS,\na_\cS)_{1+c} \tsr \pr_{1,3}^*(\cS,\na_\cS)_c \tsr \pr_{1,3}^*\Lna_{\on{BB}} .
  \end{split}
\end{equation}
(Out of six $\Lna_{\det}$ factors that we get from two copies of $\Lna_{\ker}$, two cancel out and the other four group into two pullbacks of $(\cS,\na_\cS')$; the $d-\pr_1^*\th\fr$ comes from the $\mu^*$ in Proposition~\ref{prop:trsr-grb duality data}.)  Here $(\cS,\na_\cS)_{c'}$ is the pullback of $(\cS,\na_\cS)$ under the identification $(\Higgs^\circ)\fr \xx_{(\bo)\fr} \rT_{c'} \isoto (\Higgs^\circ)\fr \xx_{(\bo)\fr} \rT_1 \isoto (\Higgs^\circ)\fr \xx_{(\bo)\fr} \Loc^\circ \isoto \Gamma$ (the first of these identifications ``dilates'' the Hitchin base by~$c'$), and similarly for $\na_\cS'$.

First we are going to construct an isomorphism $\a_{\rX,\AJ}$ between pullbacks of these splittings (which is the same thing as trivialize the pullback of~$\tilde\cK$) under the Abel--Jacobi map $(\tilde\rC^\circ)\frr\x_B\tx\x_B\ty   \xra{\AJ\x\id\x\id}  \Pic((\tilde\rC^\circ)\frr/B) \x_B\tx\x_B\ty  = \rX\x_B\tx\x_B\ty$.  The ingredients of the construction of this trivialization are the isomorphism in Lemma~\ref{chdet} and formula~\eqref{tth id}.  To begin with, notice that $\Lna_{\on{univ},\GAJ}$ gives rise to the universal splitting of the gerbe $((\pr^{\tilde\rC^\circ}_{T^*C})^* \dd_C) \xx_B (\Loc^\circ)\fr$ on $(\tilde\rC^\circ)\frr \xx_B (\Loc^\circ)\fr$ which is the pullback of $\cP_{\ty}$ under $\AJ_{(\tilde\rC^\circ)\frr/B} \xx_B \id_{\ty}$.  This gives an isomorphism $(\AJ\x_{(\bo)\fr}\id)^*\Lna_{\on{BB}} \isoto \Lna_{\on{univ},\GAJ}$.  On the other hand, Lemma~\ref{chdet} allows to also rewrite in terms of $\Lna_{\on{univ},\GAJ}$ the pullback by $\AJ\x\id\x\id$ of the other two factors in the last line in~\eqref{eq:l.bundle K}.  Using these identifications, we get the following isomorphism:
\begin{multline*}
    (\AJ \xx_{(\bo)\fr} \id_{\rT_1} \xx_{(\bo)\fr} \id_{\rT_c})^*(\cK,\na_\cK)  \isoto\\
      \isoto (\id_{(\tilde\rC^\circ)\fr} \xx_{(\bo)\fr} a)^* \Lna_{\on{univ},\GAJ}  \tsr  (\pr_{1,3}^*\Lna_{\on{univ},\GAJ} \tsr \pr_{2,3}^*\Lna_{\on{univ},\GAJ})^{\tsr-1}.
\end{multline*}
But the construction of~$a\colon \rT_1\xx_{(\bo)\fr}\rT_c \to \rT_{1+c}$ (as tensoring over the spectral curve) and of $\Lna_{\on{univ},\GAJ}$ give a tautological trivialization of the triple product in the right-hand side.

Thus we have constructed the isomorphism $\a_{\rX,\AJ}$.  According to Corollary~\ref{cor:trsr-grb duality data on Im AJ} and Remark~\ref{rem:S2-symmetry automatic}, we get a canonical extension~$\a_\rX$ of~$\a_{\rX,\AJ}$ to $\rX\x_B\tx\x_B\ty$.  According to Remark~\ref{rem:isom aX enough}, this is enough to conclude that the functor constructed from~$\cQ$ is an equivalence.  Alternatively, one can construct the isomorphism~$\a_\rY$ in the same way, check the compatibility of $\a_{\rX,\AJ}$ and $\a_{\rY,\AJ}$ on $(\tilde\rC^\circ)\frr \x_B \tx \x_B (\tilde\rC^\circ)\frr \x_B \ty$, and apply Proposition~\ref{prop:trsr-grb duality data}.


\subsection{Alternative construction of $\tilde\th$}\label{subs:alt constr}
The goal of this subsection and the next  one is to prove formula~\eqref{eq:hard tth id}.  This will be done by lifting~$\tilde\th$ to a one-form, \ie constructing $\th_0\in\Om^1(\Loc^\circ)$ such that
\begin{equation}\label{eq:tth=delta(th0)}
    \tilde\th = \delta(\th_0)
\end{equation}
(where $\delta$ is defined in~\ref{extcrv}), and prove a formula similar to~\eqref{eq:hard tth id} for~$\th_0$.  In this subsection we will prove that the images of both sides of~\eqref{eq:tth=delta(th0)} in~$\Om^2(\Loc^\circ)$ under~$\sQ$ coincide.  The full identity~\eqref{eq:tth=delta(th0)} will be proved in the next subsection.

Let $\Oloc$ be the canonical $2$-form on the smooth part~$\Loc\sm$ of~$\Loc$, so that $\Oloc=F_{\na_{\det}} = \sQ(\tilde\th)$.  Since $\Loc\sm$ is a $\Gm$-gerbe over the corresponding coarse moduli space $\ul{\Loc\sm}$, the differential forms on $\Loc\sm$ are the same as differential forms on $\ul{\Loc\sm}$.  We want to construct a 1-form $\th_0$ on $\Loc\sm$ such that $\Oloc=d\th_0$. Since $\ul{\Loc\sm}$ is symplectic, this is equivalent to constructing a vector field $\xi_0$ on $\ul{\Loc\sm}$  which is Liouville, \ie $L_{\xi_0}\Oloc = \Oloc$ where $L$ denotes the Lie derivative.  We will actually construct $\xi_0$ as a vector field on $\Loc$: the one on $\ul{\Loc\sm}$ is then obtained by restricting to $\Loc\sm$ and then descending to $\ul{\Loc\sm}$ (again using the fact that differential forms descend uniquely from gerbes).

Here is the general plan of the  construction.     First of all, we know that $\Loc$ is  the moduli space of  certain coherent sheaves on the gerbe $\dd_C$ on $T^*C\fr$.  On $\Tcf$ there is an action of $\Gm$ by dilations and hence an action of  its Lie algebra.  We lift this action to the gerbe $\dd_C\sim\dd_{T^*C,\thcf}$ (where $\thcf$ is the canonical 1-form on $\Tcf$).  In fact, we construct an action of this Lie algebra on the gerbe $\dd_{c\thcf}$ for all $c\in\k$, compatibly with identifications $$\dd_{c\thcf}\cdot\dd_{c'\thcf}\isoto\dd_{(c+c')\thcf}.$$ Now the torsor $\rT_c$ is obtained from $\dd_{c\thcf}$  by taking $(\pr^{\tco}_{T^*C})\fr^*$ and taking torsors of splittings,  so we get  a vector field $\xi_{0,c}$ on $\rT_c$, and compatibility with the above identification (with $c'=1$) implies compatibility of $\xi_0:=\xi_{0,1}$ with the map~$a$ in~\eqref{eq:a}
(note though that by the identification $\rT_c\isoto\loc$ for $c\neq0$ we have $\xi_{0,c}=\xi_0$).

A description of $\Oloc$ in terms of Serre duality on $\dd_C$ implies that $\xi_0$ is a Liouville vector field for $\Oloc$, which allows to convert the above properties of $\xi_0$ to properties of the symplectic dual $\th_0$  to $\xi_0$  and in particular prove an analog of \eqref{eq:hard tth id} for $\th_0$
(see equation \eqref{a th0}).

Taking $c=0$  in \eqref{a th0}, we get an analog of \eqref{tth id} for $\th_{0,c}$, from which we conclude that $\tth-\delta(\th_0)$ descends to $\bof$.
We actually show that $\tth-\delta(\th_0)=\chi^{\circ*}\sP(\b_0)$
where $\b_0$ extends to all of $\rB\frr$ which allows to prove in \ref{subs:proof Jer} that $\b_0=0$ after a certain degree estimate.

\bigskip
First, we need another interpretation of the form $\Oloc$.  Note that for a $\Gm$-gerbe~$\g$ on a smooth variety~$X$, and two $\g$-modules $\cM$ and $\cN$ with proper support, we have the following version of Serre duality: $\RHom(\cM,\cN)^*\isom \RHom(\cN,\cM\tsr\omega_X)[\dim X]$ where $\omega_X$ is the sheaf of top degree differential forms on~$X$. In particular, if $X$ is a symplectic surface and $\cM=\cN$, we have a nondegenerate (in fact, antisymmetric) bilinear form on the space $\Ext^1(\cM,\cM)$ which is identified with the tangent space at~$\cM$ to the moduli space $\mxg$ of $\g$-modules on~$X$ with proper support, so we get a non-degenerate 2-form
 on this moduli space. One can use properties of the Serre duality to show that this form is closed, so it gives a symplectic structure on $\mxg$.

\begin{rem}
  Here by a \emph{symplectic structure}  on an (Artin) stack $\rY$ (say, over~$\k$) we mean a closed 2-form (in the sense of K\"ahler differentials) on $\rY$  which induces a symplectic structure on the tangent space (\ie $H^0$ of the tangent complex) to every $\k$-point of $\rY$.  (Note that this allows $\rY$ to be singular.) In fact, it is more conceptually correct to consider symplectic \emph{derived} stacks, so that we have a nondegenerate pairing on the  whole tangent complex (which would pair $H^{>0}(\cT_{\rY,y})$, which measures singularity/``derivedness'' of $\rY$, with $H^{<0}(\cT_{\rY,y})$, measuring ``stackiness'' of~$\rY$). With such a notion, the non-derived part of a derived symplectic stack would be a symplectic (usual) stack in the above sense.  Note also that a basic example of a \emph{smooth} symplectic stack  is provided by a $G$-gerbe over a symplectic variety $Y$ for a smooth group scheme $G$ over~$Y$, and this is essentially the most general example of a smooth symplectic stack (one should get the general case by taking $X$ to be a Deligne--Mumford stack).  The stacks $\mxg^\circ$ that we are interested in are of this form.
\end{rem}

We will also need the following statement, which is a  relative version of the above construction, and which is proved in a similar way:
\begin{prop}\label{prop:mxg symp}
   Let $S$ be a $\k$- (or $\Fp$\nbhp-) scheme, $\cL$ a line bundle on it, $\pi\colon X\to S$ a smooth morphism of relative dimension~$2$ with an $\cL$-valued symplectic form $\om\colon \Lambda^2\cT_{X/S}\isoto\pi^*\cL$ and suppose given a $\Gm$-gerbe $\g$ on~$X$.  Let $\mxg$ be the stack over~$S$ associating to a scheme $S'\to S$ the groupoid of $S'$-flat coherent sheaves $\cF\in\g_{S'}\hmod$ with $\supp\cF$ proper over $S'$ where $\g_{S'}=\g\x_SS'$.
   Then $\mxg$ has a canonical $2$-form
   $\iOm_{\mxg}\in\Om^2_{\mxg/S}
   \tsr\pi_{\mxg}^*\cL$ where  $\pi_{\mxg}\colon \mxg\to S$ is the structure map and $\Om^2_{\mxg/S}$ is the second exterior power of the sheaf of relative K\"ahler differentials. The form is compatible with pullbacks along $S$, and for $S=\Spec\k$, $\L=\O_S$ it coincides with the one coming from Serre duality, as described above.
\end{prop}
\begin{proof}
  The proof is  similar to the absolute case discussed above.  Namely, suppose given a map $S'\to S$ and an $S'$-point of $\mxg$ corresponding to $\cF\in\g_{S'}\hmod$.  Replacing $S,X,\g$ by $S',\ X_{S'}=X\x_SS',\ \g_{S'}=\g\x_SS'$, we can assume that $S'=S$. Now, given two families of tangent vectors $\xi,\xi'$ to $\mxg$ at $\cF$ corresponding to self-extensions $\wbar\cF,\wbar\cF'$ of~$\cF$, let $c,c'$ be their classes in $\Ext^1(\cM,\cM)$.  Then, by relative Serre duality, we have a perfect pairing $R\pi_* \RcHom(\cF,\cF) \tsr   R\pi_* \RcHom(\cF,\cF\tsr\om_{X/S}[2])\to\O_S$, which can be rewritten as $(R\pi_*\RcHom(\cF,\cF))^{\tsr2}\to\cL[-2]$. It induces a map $B_\cF\colon \Ext^1(\cF,\cF)^{\tsr2}\to H^0(S,\cL)$. We define the value of $\om_{\mxg}$ on $\xi,\xi'$ to be $B_\cF(c\tsr c')$.
  The symmetry property of the Serre duality implies that this form is anti-symmetric, so it defines a $2$-form on $\mxg$ which satisfies the conditions of the proposition.
\end{proof}
\begin{rem}
  It is actually true that the non-degenerate $2$-form $\iOm_{\mxg}$ is closed, so that $\mxg\to S$ is a relative symplectic stack, but we will not need this fact (more precisely, in all cases of interest for us, it is going to hold by other considerations), so we will not give a proof.
\end{rem}

\begin{lem}
The form $\Oloc$ coincides with $\iOm_{\mxg}$  from Proposition~\ref{prop:mxg symp} for $S=\Spec\k$, $X=T^*C\fr$, and $\g=\dd_C$ is the gerbe corresponding to the Azumaya algebra $\tilde\D_C$.
\end{lem}

\begin{proof}[Proof {\todo[sketch]}]
It is known that (in any characteristic) the category of $\O$-coherent $\D$-modules on a smooth variety $Z$ admits a Serre functor $\cS_Z$ which, moreover, is canonically isomorphic to the shift by $2\dim Z$. The same is true for $\D'$-modules where $\D'$ is any twisted differential operator algebra. We will need the case $\D'=\D_{C,\ohf}$. The lemma will follow from the following two statements:
\begin{itemize}
  \item The curvature of $\Lna_{\det}$ coincides with the 2-form constructed from this isomorphism $\cS_C\isom[2]$.
  \item The composite equivalence $$\D_{C,\ohf}\hmod \stackrel{\tsr\omega^{\tsr(1-p)/2}}\isoto\D_C\hmod \isoto \dd_C\hmod$$
       is compatible with the trivializations of Serre functors.
\end{itemize}
The first statement makes sense in any characteristic and is proved in Appendix~\ref{appx}, Proposition~\ref{appx|prop:curv Ldet}.  As for the second statement, the difference between two trivializations is an invertible function\footnote{The difference is an automorphism of the identity functor.  To see that it is given by multiplication by a function, it is better to consider families of $\O$-coherent $\D$-modules, \ie for a scheme~$S$ we consider the category of $\D_C\bx\O_S$-modules which are coherent as $\O_{C\x S}$-modules.} on $T^*C\fr$, therefore a constant.  

 To show that this constant is equal to~$1$, it  suffices to compare the two symplectic forms at points of $\rM_{\Tcf,\dd_C}$ corresponding to $\dd_C$-modules of length~$1$ supported at points of $\Tcf$. Note that such modules are parametrized by an open and closed substack in $\rM_{\Tcf,\dd_C}$ isomorphic to the total space of the gerbe~$\dd_C$ and which corresponds to the stack denoted $\rI$ in \ref{appx|subs:charp} via the identification $\rM_{\Tcf,\dd_C}\isoto \Connhf$.  Now the restriction to~$\rI$ of the symplectic form constructed from the Serre functor on $\dd_C\hmod$ is clearly just the pullback to $\rI\isom\dd_C$ of the canonical symplectic form on $\Tcf$.  The fact that the Serre functor on $\D_{C,\ohf}$ gives the same form follows (for example) from the computation of the curvature of $\Lna_{\det}$ by using Propositions \ref{appx|prop:curv Ldet}~and~ \ref{appx|prop:det on I} (this has essentially been used in the proof of Proposition~\ref{prop A}).
 \todo[Maybe say this better]
\end{proof}

In the situation above, consider the open substack $\mxg^\circ \ctd \mxg$ consisting of modules which (locally after splitting $\g$) look like (pushforward of) a line bundle on a smooth curve in~$X$. Denote by $\bo_X$ the moduli space of proper smooth curves in~$X$, and let $\rC^\circ_X\ctd \bo_X\x X \to\bo_X$ be the universal family of curves. Then we have a map $\chi_{X,\g}\colon \mxg^\circ \to \bo_X$ given by taking supports, which presents $\mxg^\circ$ as a torsor for the relative Picard stack $\Pic(\rC^\circ_X/\bo_X)$.
\begin{lem}
The map $\chi_{X,\g}$ defines an integrable system, \ie the fibers are Lagrangian.
 \end{lem}

 \begin{proof}
   Indeed, the tangent space to $\mxg^\circ$ at a point corresponding to $\cF\in\g\hmod$ is given by $H^1(\RGam(X,\RcEnd\cF))$, and the Serre duality pairing on this vector space comes from a (quasi\nbhp-)iso\-mor\-phism  of the complex $(\RcEnd\cF)[1]$ with its own Serre dual.  Moreover, the relative tangent space for $\mxg^\circ\to\bo_X$ is the middle term of the 2-step filtration coming from the canonical filtration on the complex of sheaves $\RcEnd\cF$.
   Now, since $\cF\in \mxg^\circ$, it is easy to see that the cohomology sheaves of the complex $(\RcEnd\cF)[1]$ are line bundles on $\supp\cF$ (which is a smooth projective curve in~$X$); in particular, they are Cohen--Macaulay of dimension~$1$.  Therefore the 2-step canonical filtration on $(\RcEnd\cF)[1]$ is preserved (up to a shift) by the Grothendieck duality functor, hence we get the desired Lagrangian property of the fibration $\mxg^\circ\to\bo_X$.
 \end{proof}

\ncmd{\Du}{\rD}
Denote 
$$\Du=\Spec(\k[\eps]/\eps^2).$$
A vector field on $\Loc\sm$ is the same as an automorphism of $\Loc\sm\x\Du$ over~$\Du$ which is identity on $\Loc\sm \ctd \Loc\sm\x\Du$. Let $h$ be the automorphism of $T^*C\fr\x\Du$ corresponding to the Euler vector field on $T^*C\fr$.  Since $H^2(T^*C\fr,\O)=0$, there exists a (non-unique) equivalence
\[
   \vphi\colon \pr_1^*\dd_C \isoto h^*\pr_1^*\dd_C,
\]
where $\pr_1$ is the projection $\Tcf\x\Du\to\Tcf$,
which is identity on $T^*C\fr\x\pt \subset T^*C\fr\x\Du$. Now, if we have a $\D$-module $\cM$ on $C$, let $\cM'$ be the corresponding $\dd_C$-module, and let $\overline{\cM'} =\vphi^{-1}h^*(\cM'\bx\O_\Du)$~-- this is a $\pr_1^*\dd_C$-module on $T^*C\fr\x\Du$, and denote by $\overline \cM$ the corresponding $\D_C\bx\O_\Du$-module. By construction, $\overline \cM/\eps\overline \cM\isom \cM$, so $\overline \cM$ defines a tangent vector to $\Loc\sm$ at~$\cM$. This way we get a vector field on $\Loc\sm$ which is the desired field $\xi_0$. Denote $\th_0=\iota_{\xi_0}\Oloc$.

\begin{prop}\label{prop:Liouville}
The vector field $\xi_0$ is Liouville. Equivalently, $d\th_0=\Oloc$.
\end{prop}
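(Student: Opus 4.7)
The plan is to compute $L_{\xi_0}\OLoc$ directly, using the Serre-duality description of $\OLoc$ from the preceding lemma together with the fact that the Euler vector field $\Eu$ is itself Liouville for the canonical symplectic form on $T^*C\fr$. Since $\dim T^*C\fr=2$, the canonical sheaf $\omega_{T^*C\fr}=\Omega^2_{T^*C\fr}$ is generated by the symplectic form, so $\Eu$ acts on $\omega_{T^*C\fr}$ with weight one; this is the only geometric input I would need.

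Let $g$ be the automorphism of $\Loc\sm\times\Dual$ over $\Dual$ corresponding to $\xi_0$, so that $g^*\OLoc=\OLoc+\eps\,L_{\xi_0}\OLoc$. Its value at a point $\cM\in\Loc\sm$ equals the relative Serre-duality pairing on $\Ext^1_\Dual(\overline{\cM'},\overline{\cM'})$, where $\overline{\cM'}=\Phi^{-1}h^*(\cM'\bx\O_\Dual)$ and $\cM'$ is the $\cG_\D$-module attached to $\cM$. Because $\Phi$ is an equivalence of gerbes over $T^*C\fr\times\Dual$, it intertwines the relative Serre functors, so this pairing is canonically identified with the relative Serre pairing on $h^*(\cM'\bx\O_\Dual)$ viewed as an $h^*\pr_1^*\cG_\D$-module. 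Naturality of Serre duality under the automorphism $h$ of $T^*C\fr\times\Dual$ over $\Dual$, combined with the weight-one identity $h^*(\omega_{T^*C\fr}\bx\O_\Dual)=(1+\eps)(\omega_{T^*C\fr}\bx\O_\Dual)$, identifies the latter with $(1+\eps)$ times the relative Serre pairing on the trivial family $\cM'\bx\O_\Dual$, which is $\OLoc|_\cM\tsr_\k\O_\Dual$. Hence $g^*\OLoc=(1+\eps)\OLoc$, i.e.\ $L_{\xi_0}\OLoc=\OLoc$; since $\OLoc$ is closed, Cartan's formula yields $d\theta_0=\OLoc$.

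The argument is insensitive to the choice of $\Phi$: any two choices differ by an infinitesimal gerbe automorphism trivial on $T^*C\fr\times\pt$, i.e.\ by an element of $H^1(T^*C\fr,\O)$, and such a change shifts $\xi_0$ by a Hamiltonian vector field on $\Loc\sm$, which preserves $\OLoc$ and hence the Liouville property. The main obstacle is making the naturality/base-change step above precise: one has to verify that $\Phi$ intertwines the relative Serre functors to first order in $\eps$ (the scalar ambiguity is $\equiv 1\bmod\eps$ and so is trivial on the first-order piece), and to check that the $d\eps$-component in $h^*(\omega_{T^*C\fr}\bx\O_\Dual)$ does not contribute to the relative pairing over $\Dual$.
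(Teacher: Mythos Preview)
Your argument is correct and is essentially the same as the paper's: both deduce $L_{\xi_0}\OLoc=\OLoc$ from the functoriality of Serre duality on the moduli of $\cG$-modules under the $\Dual$-family of automorphisms $(h,\Phi)$ of $(T^*C\fr,\cG_\D)$, using that $h$ scales the symplectic form (equivalently, the canonical sheaf) on $T^*C\fr$ by $1+\eps$. Your extra remark on independence of $\Phi$ is also correct and matches the content of the proposition immediately following in the paper.
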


\begin{proof}
The proposition follows from the canonicity statement in Proposition~\ref{prop:mxg symp}. Namely, we take the trivial family of symplectic surfaces $X=\Tcf\x\Du\to S=\Du$ with gerbe $\g=\dd_C\x\Du$ over it and $\L$ being the trivial line bundle $\L=\O_\Du$.  The resulting symplectic stack $\mxg$ over~$\Du$ is isomorphic to $\Connhf\x\Du$.  Now we take the automorphism of the situation which acts on $\Tcf\x\Du$  by $h$, on $\dd_C\x\Du$ by $\vphi$, and on $\L$ by $1+\eps$ (note that the latter is forced by the compatibility with the $\L$-valued symplectic form on $X$, since  $h^*\iOm_{T^*C\fr} = (1+\eps)\iOm_{T^*C\fr}$).
 Denoting by $\tilde h$ the restriction to $\Loc\sm\x\Du$ of the induced automorphism  of $\mxg=\Connhf\x\Du$, we see that canonicity of $\iOm_{\mxg}$ implies that $\tilde h^*\iOm_{\Loc} = (1+\eps)\iOm_{\Loc}$ which means (since $\tilde h= 1+\eps\xi_0$ by definition) that $L_{\xi_0}\Oloc= \Oloc$.
\end{proof}

We will now describe a class of $\vphi$'s as above, for which the corresponding anti-derivative $\th_0$ of $\Oloc$ represents the extended curvature of~$\na_{\det}$.

Recall (
this is essentially equivalent to additivity property of $\dd_\a$ in~$\a$) that for any smooth variety~$X$, the gerbe $\dd_C$ on $\Tcf$ has a natural multiplicative structure with respect to the relative group scheme structure on $\Tcf\to C\fr$.  It is easy to deduce from this that
we have the following isomorphisms of gerbes on $T^*C\fr\x\Du$:
\[
   \pr_1^*\dd_C^{-1} \cdot h^*\pr_1^*\dd_C \sim 
   e^*\pr_1^*\dd_C
\]
where 
$e\colon T^*C\fr\x\Du \to T^*C\fr\x\Du$ is given by fiberwise multiplication by~$\eps$. Now, $e$ factors through the 1st infinitesimal neighborhood $Z_1$ of the zero section in $T^*C\fr$. Therefore the above equivalence~$\vphi$ can be constructed from any trivialization $\Psi$ of~$\dd_C$ on $Z_1$ which coincides with the canonical trivialization on the zero section. We assume from now on that $\vphi$ is obtained in this way.

Recall that we need to prove formula ~\eqref{eq:hard tth id} involving certain generalized $1$-form on ~$\loc$.  We will deduce it from the following two propositions:
\begin{prop}\label{prop:a th0}
If the equivalence $\vphi$ is constructed from a splitting $\Psi$ as described above then, in the notation of~\eqref{eq:hard tth id} (for any $c\in\k\setm\Fp$), we have
 \begin{equation}\label{a th0}
    (1+c)^{-1}a^*\th_0 = \pr_1^*\th_0 + c^{-1}\pr_2^*\th_0 \,.
 \end{equation}
\end{prop}
\begin{prop}\label{prop Jer}
The extended curvature $\tilde\th$ of the determinant line bundle with connection $\Lna_{\det}$ on $\Loc\sm$ is equal to $\delta(\th_0)$.
\end{prop}

Since $\delta$ is $\k$-linear and compatible with pullbacks, Propositions \ref{prop:a th0}~and~\ref{prop Jer} imply formula~\eqref{eq:hard tth id} and therefore Theorem~\ref{main_thm}.
The rest of the subsection is devoted to the proof of this proposition.
Proposition~\ref{prop Jer} will be proved in subsection~\ref{subs:proof Jer}.

We begin by observing that, given $\Psi$, one can construct a family of equivalences
\begin{equation}\label{Phi_c}
    \vphi_c\colon \pr_1^*\dd_{c\thcf} \isoto h^*\pr_1^*\dd_{c\thcf}
\end{equation}
parametrized by $c\in\k$ (we just have to replace $e$ by fiberwise dilation by $c\eps$). One can check that $\vphi_c$ is additive in $c$, \ie compatible with the equivalence $\dd_{(c+c')\thcf}\isoto \dd_{c\thcf}\cdot \dd_{c'\thcf}$ and, if $c\in\k^\x$, $\vphi_c$ is obtained from $\vphi$ by conjugation with the isomorphism between $\dd_C=\dd_{\thcf}$ and $\dd_{c\thcf}$ lifting the dilation by~$c$ on $T^*C\fr$.

Note that similarly to the way we defined $\xi_0$, we can define a vector field $\xi_{0,c}$ on $\rT_c$ using the identification $\rT_c\isoto\rM_{\Tcf,\dd_{c\thcf}}$ and the infinitesimal automorphism (\ie $\Du$-family of automorphisms) of the pair $(\Tcf,\dd_{c\thcf})$ given by $(h,\vphi_c)$, and the above compatibility implies the following:
\begin{lem}\label{lem:a xi0}
  The maps $a_{c,c'}\colon \rT_c\xx_{\bof}\rT_{c'}\to\rT_{c+c'}$ are compatible with the vector fields $\xi_{0,c},\xi_{0,c'},\xi_{0,c+c'}$ in the sense that the ``diagonal'' vector field on $\rT_c\x\rT_{c'}\x\rT_{c+c'}$ constructed from these three vector fields is tangent to the graph of $a_{c,c'}$.
\end{lem}

We will deduce Proposition ~\ref{prop:a th0}  from the following identity:
\begin{equation}\label{c OLoc}
(1+c)^{-1}a^*\Oloc = \pr_1^*\Oloc + c^{-1}\pr_2^*\Oloc\,,
\end{equation}
and Lemma~\ref{lem:a xi0}.
Formula~\eqref{c OLoc} will follow from a more general statement:

\begin{lem}\label{lem:lagr graph general}
  Let $\pi\colon A\to B$ be a family of abelian varieties  with a symplectic  form $\iOm_A$ on $A$, such that the fibers are Lagrangian subvarieties, \ie the morphism $\pi$ defines an integrable system.  Let $\Gamma_A\ctd A\x_BA\x_BA$ be the graph of the group structure morphism $m\colon A\x_BA\to A$ (where the target of $m$ is identified with the third factor of $A\x_BA\x_BA$ so that $\Gamma_A$ consists of points of the form $(x,y,m(x,y))$). Then $\Gamma_A$ is Lagrangian in $(A\x A\x A,\iOm_A\bx1\bx1+1\bx\iOm_A\bx1-1\bx1\bx\iOm_A)$ if and only if the zero section of $A$ is Lagrangian.
\end{lem}

\begin{proof}
  We are only going to prove the ``if'' part, since this is the harder part  and the only part we will use.
  Also note that since $\dim A=2\dim B$ and $\pi$ is smooth, it is an easy dimension calculation that both the zero section in~$A$ and $\Gamma_A$ in $A\x A\x A$ have half the dimension of the ambient scheme, so for them being Lagrangian is equivalent to being isotropic.
  
  Denote
  $$\iOm_{\Gamma_A} =
  (\iOm_A\bx1\bx1+1\bx\iOm_A\bx1-1\bx1\bx\iOm_A)
  |_{\Gamma_A}.$$ 
  Thus we need to prove that $\iOm_{\Gamma_A}=0$.  Let $s\colon B\to A$ be the zero section and consider the subschemes in $A\x_BA\isoto\Gamma_A$ which are images of the maps $i_1,i_2\colon A\to  A\x_BA$  given by $i_1=\id_A\x_Bs,\ i_2 =s\x_B\id_A$.  Using the fact that the zero section is Lagrangian, \ie $s^*\iOm_A=0$, it is easy to see that
  \begin{equation}\label{eq:i1*=i2*=0}
    i_1^*\iOga=i_2^*\iOga=0.
  \end{equation}

  Now consider the filtration $F^\blt\Oga^2$ on $\Om^2_{\Gamma_A}$  with successive quotients given by $\gr_F^0\Oga^2=\pi'^*\Om^2_B$, $\gr_F^1\Oga^2=\pi'^*\Om^1_B\tsr\Om^1_{\Gamma_A/B}$, $\gr_F^2\Oga^2=\Om^2_{\Gamma_A/B}$, where $\pi'$ is the projection $\Gamma_A\to B$, and the analogous filtration $F^\blt\Om^2_A$ on $\Om^2_A$.  Since the restriction of $\iOm_A$ to the fibers of $\pi$ is $0$, the form $\Oga$ must lie in $F^1\Oga^2$.  But since the relative cotangent bundle $\Om_{A/B}$ descends to $B$ it is easy to see by projection formula that the natural morphisms
  $$\pi'_*\gr_F^i\Oga^2\xra{(i_1^*,i_2^*)}\pi_*\gr_F^i\Om_A^2\oplus \pi_*\gr_F^i\Om_A^2$$
  given by pullbacks by $i_1,i_2$ (\ie coming from the adjunction morphisms $\id\to i_{1*}i_1^*$, $\id\to i_{2*}i_2^*$) are injective for $i=0,1$. From this we see that \eqref{eq:i1*=i2*=0} implies $\iOga=0$ as desired.
\end{proof}

\begin{lem}\label{Lagr graph}
Let $(X,\iOm)$ be a symplectic surface, $\g,\g',\g''$ three $\Gm$-gerbes on it, and suppose we are given an equivalence $\g \isoto \g'\cdot\g''$. Consider the corresponding $\Pic(\rC^\circ_X/\rB_X^\circ)$-torsors $\rT =\mxg^\circ$, $\rT' =\rM_{X,\g'}^\circ$, $\rT''=\rM_{X,\g''}^\circ$ endowed with symplectic structures 
$\iOm_\rT,\iOm_{\rT'},\iOm_{\rT''}$
as in Proposition ~\ref{prop:mxg symp}.
Clearly, $\rT$ is identified with the Baer sum of torsors $\rT'$ and $\rT''$, so we can define the graph of addition $\Gamma\ctd\rT \xx_{\rB_X^\circ} \rT' \xx_{\rB_X^\circ} \rT''$.  Assume also that the zero section of $\Pic(\rC^\circ/\bo)$ is Lagrangian with respect to the Serre duality symplectic form on $\Pic(\rC^\circ/\bo)$.  Then $\Gamma$ is 
isotropic in
$(\rT\x\rT'\x\rT'',\iOm_{\rT}\bx1\bx1 -1\bx\iOm_{\rT'}\bx1 - 1\bx1\bx\iOm_{\rT''})$.
\end{lem}

\begin{proof}
Suppose we want to prove that $\Gamma$ is 
isotropic at some point $\gamma\in\Gamma$, and let $\tilde C\ctd X$ be the spectral curve corresponding to the image of $\gamma$ under the projection $\Gamma\to\bo_X$. Clearly one can replace $X$ by the formal neighborhood ${\tilde C}^\wedge$ of~$\tilde C$ in~$X$. The point $\gamma$ gives splittings of $\g$~and~$\g'$ on $\tilde C$. Moreover, we can extend these splittings to ${\tilde C}^\wedge$ since the obstruction to extending a splitting from $k$'th to $(k+1)$st infinitesimal neighborhood  lies in $H^2(\tilde C,(\cN_{X/\tilde C}^*)^{\tsr k+1})=0$ (here $\cN_{X/\tilde C}^*$ is the conormal bundle to $\tilde C$ inside $X$).

Thus we can assume that $\g'$ and $\g''$ are trivial gerbes.  Then $\Gamma$ is identified with the graph of addition on $\Pic(\rC^\circ_X/\rB_X^\circ)$. Since $\gamma$ corresponds to a point in the zero section in ~$\Gamma$, it is sufficient to prove that the graph of addition is isotropic for $\Pic^0(\rC^\circ_X/\bo_X)$, which in turn reduces to the family of Jacobians $\ul\Pic^0(\rC^\circ_X/\bo_X)$ (over which $\Pic^0(\rC^\circ_X/\bo_X)$ is a $\Gm$-gerbe). Since $\ul\Pic^0(\rC^\circ_X/\bo_X)$ is an abelian scheme over $\bo_X$, the fact that this graph is isotropic (actually, Lagrangian) follows from the condition on the zero section in $\Pic(\rC^\circ_X/\bo_X)$ by Lemma ~\ref{lem:lagr graph general}.
\end{proof}

\begin{cor}\label{cor:a th c c'}
For $c\in\k$, let $\rT_c\to\bof$ be the torsor of fiberwise splittings of the gerbe $(\pr^{\tco}_{\Tcf})\fr^*\dd_{\Tcf,c\thcf}$ along the fibers of the projection $(\tco)\fr\to\bof$. (For  $c\neq0$ this torsor is identified with the one defined in~\S\ref{subs:outline}.)  We have an addition map
$$a_{c,c'}\colon \rT_c\x_{\bof}\rT_{c'}\to\rT_{c+c'}$$
for any $c,c'\in\k$ and consider the projections $\pr_1\colon \rT_c\x_{\bof}\rT_{c'}\to\rT_{c}$ and $\pr_2\colon \rT_c\x_{\bof}\rT_{c'}\to\rT_{c'}$.  Also recall the symplectic form $\iOm_{\rT_c}$ on $\rT_c$, the vector field $\xi_{0,c}$ on $\rT_c$ constructed from a splitting $\Psi$ as explained before, and the dual $1$-form $\th_{0,c}=\iota_{\xi_{0,c}}\iOm_{\rT_c}$.  Then we have
\begin{equation}\label{eq:a th c c'}
   a_{c,c'}^*\th_{0,c+c'} = \pr_1^*\th_{0,c} + \pr_2^*\th_{0,c'}.
\end{equation}
\end{cor}

\begin{proof}
We first apply Lemma~\ref{Lagr graph} to $X=\Tcf,\ \g'=\dd_{c\thcf},\ \g''=\dd_{c'\thcf}$. In order to do it, we first have to check the Lagrangian property of the zero section in $\rM_{\Tcf,B\Gm\x\Tcf}
$ where $B\Gm\x\Tcf$ is considered as the trivial $\Gm$-gerbe on $\Tcf$.  Clearly we can remove the Frobenius twists and reduce to proving that the zero section in $\rM_{T^*C,B\Gm\x T^*C}$ is Lagrangian.  Now, under the identification $\rM_{T^*C,B\Gm\x T^*C}=\Pic(\rC^\circ/\bo) \isoto \Higgs^\circ$ the Serre duality symplectic form corresponds to the canonical symplectic form on the cotangent bundle (one can prove this by a simpler version of the argument in the proof of Proposition~\ref{appx|prop:curv Ldet}) and the zero section in $\Pic(\rC^\circ/\bo)$ corresponds to (an open subset of) the fiber of $\Higgs\to\Bun$ over the point associated with the bundle $\dsum_{i=0}^{N-1}\cT_C^{\tsr i}$, and hence is Lagrangian.

Recall that by definition we have $\rt_c=\rmo_{\Tcf,c\thcf}$ and the addition map referred to in Lemma~\ref{Lagr graph} in our case is exactly $a_{c,c'}$.
The lemma thus shows that
the graph $\Gamma_{c,c'}$ of the map $a_{c,c'}$ is Lagrangian inside the symplectic stack $(\rt_c\x\rt_{c'}\x\rt_{c+c'}, \ \iOm_{c,c'})$ where $$\iOm_{c,c'}=\iOm_{\rt_c}\bx1\bx1 + 1\bx\iOm_{\rt_c}\bx1 - 1\bx1\bx\iOm_{\rt_{c+c'}}.$$
On the other hand, we know by Lemma~\ref{lem:a xi0} that the vector field
$$\eta_{c,c'}:= \xi_{0,c}\bx1\bx1 +1\bx\xi_{0,c'}\bx1+ 1\bx1\bx\xi_{0,c+c'}$$
is tangent to $\Gamma_{c,c'}\ctd  \rt_c\x\rt_{c'}\x\rt_{c+c'}$.  These two statements together imply that the $1$-form
$$\iota_{\eta_{c,c'}}\iOm_{c,c'}= \th_{0,c}\bx1\bx1 + 1\bx\th_{0,c'}\bx1 - 1\bx1\bx\th_{0,c+c'}$$
 vanishes on $\Gamma_{c,c'}$, which is equivalent to formula~\eqref{eq:a th c c'}.
%
%
\end{proof}

Now it is easy to see that Corollary~\ref{cor:a th c c'} implies  Proposition ~\ref{prop:a th0}.  Indeed, by definition of~$\rt_c$, for $c\in\k^\x$ there is an isomorphism $\mu_c\colon \rt_c\isoto\rt_1=\loc$ lifting the map $[c]\colon \bof\to\bof$ coming from the $\Gm$-action on $\bof$ and we have $\mu_c^*\Oloc=c\iOm_{\rt_c}$.
(To see this, one should use the definition of $\rt_c$ as $\rmo_{\Tcf,\dd_{c\thcf}}$ and apply Proposition ~\ref{prop:mxg symp} to the isomorphism $(\Tcf,\dd_{c\thcf}) \isoto(\Tcf,\dd_\thcf)$, similarly to the proof of Proposition ~\ref{prop:Liouville}.)  We also have $\mu_c^*\xi_0=\xi_{0,c}$ (here the pullback of a vector field is well-defined since $\mu_c$ is an isomorphism), and hence $\mu_c^*\th_0=\th_{0,c}$. From this we see that substituting $(1,c)$ for $(c,c')$ in \eqref{eq:a th c c'} gives formula~\eqref{a th0}.

\subsection{Proof of Proposition~\ref{prop Jer}}\label{subs:proof Jer}
We want to prove the equality of two sections of $\eF_{\loc}$: $\tth=\delta(\th_0)$.  We begin by showing that the difference
$$\tilde\a_0=\tilde\th-\delta(\th_0)$$
descends to $\bof$ and, moreover, extends to $\rB\fr$:

\begin{lem}\label{1-form on B}
We have $\tilde\th-\delta(\th_0)= \sP(\chi\pfr^*\b_0')$ where $\chi'$ is the map $\Loc\sm\to\rB\fr$, $\delta$ and $\sP$ are defined in~\ref{extcrv}, and $\b_0'$ is some form on $\rB\frr$.
\end{lem}

\begin{proof}
We already know that
$\sQ(\delta(\th_0))=d\th_0=\Oloc=\sQ(\tilde\th)$. So $\sQ(\tilde\a_0)=0$, which means that $\tilde\a_0=\sP(\a_0)$ for some $\a_0\in\Gamma((\Loc\sm)\fr,\Om^1_{(\Loc\sm)\fr})$. We want to prove that $\a_0=\chi\pfr^*\b_0'$ for some $\b_0'$. We'll show that $\a_0|_{\Loc^\circ}$ is a pullback of some $1$-form $\b_0$ on $\bof$. Using the properties of $\chi'$, we can then deduce that $\b_0$ extends to the whole $\rB\fr$ since $\chi\pfr^*\b_0$ extends to $\Loc\sm$.

Now let $\Gamma$ be the graph of addition in $(\Higgs^\circ)\fr \xx_{\rB\fr}\Loc^\circ \xx_{\rB\fr}\Loc^\circ$. The argument of the proof of Proposition~\ref{a th0} applied to $c=0$ shows that on $\Gamma$ we have $\pr_1^*\th\fr+\pr_2^*\th_0=\pr_3^*\th_0$ where $\th$ is the canonical $1$-form on $\Higgs=T^*\Bun$. (We use that the vector field $\xi_{0,0}$ on $\Higgs$ coincides with the Euler vector field, and therefore $\th_{0,0}=\th$.) Applying $\delta$ yields $\delta(\pr_1^*\th\fr)+\delta(\pr_2^*\th_0)=\delta(\pr_3^*\th_0)$. On the other hand, we know from~\eqref{tth id} that $\delta(\pr_1^*\th\fr) + \pr_2^*\tilde\th = \pr_3^*\tilde\th$, so subtracting the previous equation, we get $\pr_2\fr^*\tilde\a_0=\pr_3\fr^*\tilde\a_0$, so $\tilde\a_0|_{\Loc^\circ}$ is a pullback of some $\tilde\b_0'\in\Gamma(\bof,\eF_{\bof})$. Since $\tilde\a_0\in\Im(\sP)$, we must have $\tilde\b_0'\in\Im(\sP)$, so $\tilde\b_0'=\sP(\b_0)$ for some $\b_0$.

We conclude the proof by noticing that  the restriction of the map $\chi$ to the maximal open substack $\Locchism$ such that $\chi|_{\Locchism}$ is smooth is surjective, which means that since $(\chi^\circ)^*\b_0$ extends from $\loc$ to $\Loc\sm$, the form $\b_0$ must extend from $\bof$ to $\rB\fr$.
\end{proof}

Now we will investigate the behavior of $\tth$~and~$\th_0$ with respect to the projection $\Loc\to\Bun$.

 Denote by $\Bun^{[d]}$ the connected component of~$\Bun$ consisting of vector bundles of degree~$d$ and $\Loc^{[d]}$ its preimage in~$\Loc$.  Note that $\Loc^{[d]}$ is nonempty only for $d$~divisible by~$p$.

We will deduce Proposition~\ref{prop Jer} from Lemma ~\ref{1-form on B} and the following two statements:

\begin{lem}\label{lem transv}
  For sufficiently large~$p$ (relative to $N$ and the genus of~$C$) and generic curve~$C$ the fibers of the maps $q\colon \Loc \to \Bun$ and $\chi\colon \Loc \to \rB\fr$ are transversal generically on $\Loc^{[0]}$.
\end{lem}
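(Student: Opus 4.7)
My approach is to reduce the statement to an infinitesimal check at a single conveniently chosen point via semicontinuity. Since $\dim \Loc^{[0]} = 2\dim\Bun = \dim\Bun + \dim\rB\fr$, generic transversality of the two fibrations is equivalent to generic \'etaleness of the combined map $(q,\chi)\colon \Loc^{[0]} \to \Bun\x\rB\fr$, which is a Zariski-open condition. So it suffices to exhibit one smooth point $x_0=(E_0,\na_0)\in \Loc^{[0]}$ at which $d(q,\chi)_{x_0}$ is an isomorphism, or equivalently at which the restriction of $d\chi$ to the tangent to the $q$-fiber, namely the map $H^0(C,\omega_C\tsr\End E_0) \to \bigoplus_{i=1}^N H^0(C\fr,\omega_{C\fr}^{\tsr i})$, is an isomorphism. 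By matching of dimensions, injectivity alone suffices.

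The witness $x_0$ should come from the spectral correspondence applied in a generic direction. For $C$ generic (hence in particular ordinary), I would pick a point $b\in(\rB^0)\fr$ whose spectral cover $\pi\colon \tilde C \to C\fr$ is smooth and simply ramified over $C\fr$, together with a line bundle $\cL$ on $\tilde C$ whose associated rank-$N$ $\D_C$-module (obtained by pushing $\cL$ to $C\fr$ and Frobenius-pulling back to $C$) lies in $\Bun^{[0]}$. Such points are produced in the ordinary case by deforming a canonical Frobenius-pullback connection $\na_{\on{can}}$ on $\Fr_C^*F$ for a stable $F$ on $C\fr$ of degree~$0$: the $p$-curvature of $\na_{\on{can}}$ vanishes, but one may perturb $\na_{\on{can}}$ to obtain a connection with generic smooth spectral curve while keeping the underlying bundle essentially fixed, provided the linearized perturbation map is non-degenerate---a condition ultimately controlled by the Hasse--Witt/Cartier data on~$C$.

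With $x_0$ fixed, the differential of $\chi$ along the $q$-fiber can be computed via Jacobson's formula: for $A\in H^0(C,\omega_C\tsr\End E_0)$,
\[
   \delta\psi_{\na_0}(\xi) = \sum_{i+j=p-1} \na_{0,\xi}^{\,i}\, A(\xi)\, \na_{0,\xi}^{\,j},
\]
followed by the linearization of the characteristic-polynomial map. Using the eigenvalue decomposition of $\pi^*\End E_0$ on $\tilde C$ (diagonal with respect to $\psi_{\na_0}$ away from ramification) together with the identification of the coefficients of the characteristic polynomial with symmetric functions of the eigenvalue $1$-forms on $\tilde C$, this differential factors through a BNR-style pushforward $\pi_*\colon H^0(\tilde C,\omega_{\tilde C}) \to \bigoplus_{i=1}^N H^0(C\fr,\omega_{C\fr}^{\tsr i})$, which is an isomorphism of vector spaces of equal dimension for a generic smooth spectral cover.

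The main obstacle is to make this identification rigorous in positive characteristic: one must verify that Jacobson's formula, composed with the linearization of the characteristic polynomial, reduces at a Frobenius-destabilized base point (or its generic neighbor) to the classical spectral-pushforward isomorphism. This relies on the \'etaleness of $\tilde C \to C\fr$ over the generic locus of $(\rB^0)\fr$, and on the ordinary-curve hypothesis, which underwrites both the existence of a degree-$0$ local system with generic spectral curve on the chosen component and the non-degeneracy of the Cartier-type pairings appearing when transporting the computation from $\tilde C$ back to $C\fr$.
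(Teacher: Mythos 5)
The paper itself never proves this lemma---it is one of the explicitly omitted arguments (the introduction only records that the authors ``know how to prove this for one of the components of~$\Loc$ assuming $C$ is ordinary'')---so your proposal has to stand on its own, and its central step is not established. Your reduction is sound: transversality at a point is equivalent to injectivity of $d\chi$ on the tangent space $H^0(C,\omega_C\tsr\End E_0)$ to the $q$-fiber, this is an open condition, and a single witness handles the irreducible component containing it (note this only matches the paper's hedge about one component; the lemma as stated needs either irreducibility of $\Loc^{[0]}$ or a witness on every component). The genuine gap is the assertion that this differential ``factors through a BNR-style pushforward $\pi_*\colon H^0(\tilde C,\omega_{\tilde C})\to\bigoplus_i H^0(C\fr,\omega_{C\fr}^{\tsr i})$, which is an isomorphism.'' That identification is the one valid for the Higgs/characteristic-zero Hitchin map. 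For the $p$-Hitchin map, the Jacobson-formula differential $A\mapsto\bigl(\xi\mapsto(\on{ad}\,\na_\xi)^{p-1}(A(\xi))-A(\xi^{[p]})\bigr)$ is a Cartier-type operator, not the tautological spectral pushforward. Already for $N=1$ the $q$-fiber is the affine space $H^0(C,\omega_C)$, the map $a\mapsto\curv_p(d+a)$ is additive with linear part (up to sign and twist) the Cartier operator, so $d\chi$ restricted to the $q$-fiber is an isomorphism precisely when $C$ is ordinary. This shows the nondegeneracy you need is a Hasse--Witt-type statement---it is exactly the content of the lemma---and not a formal consequence of smoothness of the spectral curve plus matching dimensions.

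For $N>1$, at a point with smooth $p$-spectral curve the same computation yields, via the spectral decomposition you invoke, a Cartier operator on $\tilde C$ twisted by the splitting line bundle, composed with $\pi_*$; its injectivity does not follow from ordinariness of $C$ alone (ordinariness of $C$ implies neither ordinariness of $\tilde C$ nor nondegeneracy of a twisted Cartier operator), and you explicitly defer exactly this point (``the main obstacle''), so the key nondegeneracy is asserted rather than proved. The witness construction is also close to circular: you propose to perturb $\na_{\mathrm{can}}$ on $\Fr_C^*F$ ``provided the linearized perturbation map is non-degenerate,'' which presupposes essentially the statement to be proved; moreover $\curv_p(\na_{\mathrm{can}})=0$, so at that point the spectral curve is the maximally degenerate thickened zero section, and no first-order perturbation reaches the locus of smooth spectral curves---to place a witness in $\Loc^{[0]}$ on the desired component you need a global existence argument (e.g.\ the BNR-type construction of local systems with prescribed smooth $p$-spectral curve and prescribed degree, as in~\cite{BB}). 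In outline the plan is reasonable, but the actual difficulty of the lemma---generic invertibility, for generic (ordinary) $C$, of the twisted Cartier operator controlling $d\chi$ along the fibers of~$q$---remains open in your write-up.
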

\begin{lem}\label{lem:vanishing on fibers}
  The 1-form $\b_0 = \chi\fr^*\b_0'$ (where $\b_0'$ is defined in Lemma~\ref{1-form on B}) vanishes on $q^{-1}(b)$ if $b\in\Bun$ is such that $q$ is smooth over~$b$.
\end{lem}
Clearly, together with Lemma~ \ref{1-form on B}, these two statements imply the desired equality $\b_0'=0$.

\begin{proof}[Proof of Lemma~\ref{lem transv}] 
First let us note that the map $(q,\chi)\colon \Loc\to \Bun\x\rB\fr$ can be degenerated into the map $(V\circ\pi\fr,h\fr)\colon \Higgs\fr\to \Bun\xx\rB\fr$ where $h$ is the standard Hitchin map, $\pi\colon \Higgs\to\Bun$ is the natural projection, and $V\colon \Bun\fr\to\Bun$ is the map given by pullback of bundles by $\Fr_C$.  The degeneration is realized by the family $\{\Loc'_\la\}$ where $\Loc'_\la$ is the stack parametrizing triples $(\cE,\na,K)$ where $(\cE,\na)\in\Loc$ and $K\colon \cE\to\cE\tsr\Om_C\fr[C]$ is a map satisfying $\la K= \curv_p(\na)$.  (This is equivalent to considering $\D_{T^*C,\la\thcf}$-modules of the appropriate kind.)  For $\la\ne0$ it identifies with $\Loc$, and for $\la=0$ we have $\Loc'_\la\isom\Higgs\fr$.

Now it is enough to prove that $V\circ\pi\fr$ and $h\fr$ are generically submersions (\ie smooth morphisms) and their fibers are transversal at a generic point. This would follow from two statements: namely, (a) $\pi\fr$ and $h\fr$ (or equivalently $\pi$ and $h$) are generically submersions and have generically transversal fibers, and (b) the differential of~$V$ is generically an isomorphism (\ie $V$ is generically \'etale).  The first one is true in characteristic~$0$ (because otherwise these fibers generically would have to have a curve in their intersection, which would contradict the fact that one of them is affine, and the other is proper).  Therefore it must be true for sufficiently large~$p$ and generic~$C$.

For the second one, let us notice that the differential of~$V$ at a point of~$\Bun$ corresponding to a vector bundle~$\cE'$ on~$C\fr$ is the map $\REnd(\cE') [1] \to \REnd(\Fr_C^*\cE') [1]$ given by $\Fr_C^*$.  Since the condition that this map be an isomorphism is open (in $\cE'$ and~$C$), and $\Bun^{[0]}$ is connected, it is enough to check it for one point of $\Bun^{[0]}$.  Take the point corresponding to the trivial vector bundle $\cE'= \O_{C\fr}^N$: then the differential of~$V$ at that point is given by the direct sum of $N$ copies of the map $\Fr_C^*\colon \RGam(\O_{C\fr}) [1] \to \RGam(\O_C) [1]$, which is an isomorphism iff  $C$ is ordinary (not supersingular).
%
%
%
\end{proof}

Now we turn to the proof of Lemma~\ref{lem:vanishing on fibers}.
Consider the stack $\wtld\Loc$ over $\AA^1$ whose fiber over $\la \in \AA^1(\k) = \k$ is the stack $\Loc_\la$ of rank~$N$ bundles on~$C$ with $\la$-connection; in particular
  $$\tLoc\xx_{\AA^1}\{1\}\isom\Loc, \quad
  \tLoc\xx_{\AA^1}\{0\}\isom\Higgs.$$
The stack $\wtld\Loc$ also has a canonical $\Gm$-action lifting the dilation action on~$\AA^1$.  Consequently, we have an isomorphism
$$\wtld\Loc \xx_{\AA^1} \Gm  \isom  \Loc \x \Gm.$$
Let $t$ be the coordinate function on~$\AA^1$.  Denote by $\Loc\sm$ the smooth part of~$\Loc$ and by $\wtld\Loc\sm$ the maximal open substack in~$\wtld\Loc$ smooth over~$\AA^1$.  Note that the fiber of $\wtld\Loc\sm$ is \emph{not} equal to the maximal open substack $\Higgs\sm$ of the central fiber but is rather the intersection of $\Higgs\sm$ with the union $\Higgs^{[p\ZZ]}$ of connected components parametrizing Higgs bundles of degree divisible by~$p$.

The stack $\wtld\Loc$ defines a filtration on functions, differential forms, etc.~on $\Loc$ (and on open substacks thereof): a form~$\eta$ on $\Loc$ belongs to the $k$'th filtered piece iff the pullback of~$\eta$ to $\Loc \x \Gm \into \wtld\Loc$ has pole of order not greater than~$k$ along $\Higgs = \wtld\Loc \xx_{\AA^1} \{0\}$.  This filtration is compatible with the de~Rham differential.  Similarly, we get a filtration on $\Gamma(\eF_{\Loc\sm}) = \Gamma(\Om^1_{\Loc\sm} / d\O_{\Loc\sm})$.  All these filtrations will be denoted by~$F^\blt$.

For example, there is a relative symplectic form on $\wtld\Loc\sm / \AA^1$ of weight~1 with respect to the $\Gm$-action.  Its restriction to the fibers over $0,1 \in \AA^1$ are the standard symplectic forms on $\Higgs\sm$ and $\Loc\sm$.  This means that $\Oloc \in F^1\Om^2(\Loc\sm)$.  It is also straightforward to check that
\begin{equation}\label{eq:degree of tth}
    \tilde\th \in F^p\eF(\Loc\sm) .
\end{equation}


\begin{lem}\label{lem:degree leq p}
  We have $\th_0 \in F^{p+1} \Om^1(\Loc\sm)$.  Equivalently, $\xi_0 \in F^p \cT(\Loc\sm)$.
\end{lem}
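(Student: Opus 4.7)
The plan is to reduce $\th_0 \in F^{p+1}$ to the equivalent statement $\xi_0 \in F^p$, then prove the latter using the family of equivalences $\Phi_c$ from~\eqref{Phi_c}. For the reduction, since $\OLoc$ is symplectic, contraction with~$\OLoc$ gives a filtered isomorphism $\cT(\Loc\sm) \isoto \Omega^1(\Loc\sm)$ of $\O_{\Loc\sm}$-modules; filtered degrees add under contraction, and since $\OLoc \in F^1$ (noted just before the lemma), this increase equals~$1$, so $\xi_0 \in F^p\cT(\Loc\sm)$ is equivalent to $\th_0 = \iota_{\xi_0}\OLoc \in F^{p+1}\Omega^1(\Loc\sm)$.

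For the bound, I extend $\xi_0$ to a one-parameter family. Replacing~$\Phi$ by~$\Phi_c$ in the construction of~$\xi_0$ yields a vector field $\xi_{0,c}$ on~$\rT_c$ for every $c\in\AA^1$, with $\xi_{0,1}=\xi_0$ and $\xi_{0,0}$ equal to the Euler vector field on $\Higgs\fr = \rT_0$ (since $\cG_{c\th}$ is trivial at $c=0$). Additivity of~$\Phi_c$ in~$c$ makes the family regular at~$c=0$. The $c$-family $\{\rT_c\}$ is then identified with $\wtld\Loc \to \AA^1$ via $\rT_{\la^p} \isom \wtld\Loc_\la$, because the $p$-curvature of a $\la$-connection $\la\na$ equals $\la^p$ times that of the renormalized usual connection~$\na$. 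Transferring $\{\xi_{0,c}\}$ through this identification gives a vertical vector field~$\tilde\xi_0$ on~$\wtld\Loc\sm$ that restricts to~$\xi_0$ at $\la=1$, to the Euler vector field at $\la=0$, and depends regularly on~$\la^p$. A comparison with the canonical $\Gm$-equivariant extension of~$\xi_0$ to~$\wtld\Loc\sm$ then translates this regularity in~$\la^p$ into a pole of order~$\le p$ at~$\la=0$, giving $\xi_0 \in F^p$.

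The main obstacle is the weight bookkeeping in the last comparison: $c=\la^p$ is the Frobenius map on~$\AA^1$, with zero differential in characteristic~$p$, so fibers match but tangent directions are lost in the identification. The pole of order exactly~$p$ arises because the $\Gm$-action on~$\wtld\Loc$ (scaling connections and~$\la$) pulls back to $c\mapsto\mu^p c$ on the $c$-family. Making this sharp requires the additivity of~$\Phi_c$ in~$c$ together with the conjugation relation expressing~$\Phi_c$ via dilation by~$c$ on~$T^*C\fr$ (for $c\in\k^{\times}$), and checking that these combine to produce weight exactly~$-p$ for~$\tilde\xi_0$ under the $\Gm$-action on~$\wtld\Loc\sm$.
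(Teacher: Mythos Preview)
Your reduction of $\th_0 \in F^{p+1}$ to $\xi_0 \in F^p$ via contraction with the symplectic form is correct. The gap is in the main step.

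The claimed identification $\rT_{\la^p} \cong \wtld\Loc_\la$ does not extend over $\la = 0$: the fiber of the $c$-family at $c=0$ is $\rT_0 = (\Higgs^0)\fr$ (the moduli of modules over the trivial gerbe $\cG_0$ on $T^*C\fr$), whereas $\wtld\Loc_0 = \Higgs$. These are not the same variety. For $\la \ne 0$ both fibers are canonically $\Loc^0$, and under that common identification your $\xi_{0,c}$ (whether constructed via $\Phi_c$ or pulled back from $\xi_0$ --- the paper notes these agree) is simply $\xi_0$ on $\Loc^0$; hence the field you transfer to $\wtld\Loc|_{\Gm}$ \emph{is} the $\Gm$-equivariant extension of $\xi_0$. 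The ``comparison'' in your last paragraph therefore compares a vector field with itself and yields no pole estimate. To extract the bound $\le p$ you would need an actual degeneration relating the $\{\rT_c\}$-family to $\wtld\Loc$ across $0$, and none is supplied (nor even a total space for the $c$-family). You flag this as ``the main obstacle'' but do not resolve it; the appeals to additivity and the conjugation relation for $\Phi_c$ only concern the $c$-family internally and say nothing about the passage to $\wtld\Loc$.

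The paper's argument is different and direct: it writes the tangent vector $\xi_0(\cM) \in \Ext^1_\D(\cM,\cM)$ explicitly as $\curv_p(\cM) \cdot (\id_\cM \tsr v)$, where $v \in \Ext^1_\D(\O_C, \cT_C\fr[C])$ is the fixed extension class encoding the splitting of $\cG_\D$ on the first infinitesimal neighborhood of the zero section, and $\curv_p(\cM) \colon \cM \tsr \cT_C\fr[C] \to \cM$ is the $p$-curvature. Since $p$-curvature is defined for $\la$-connections (\S\ref{lconn}) and scales as $\la^p$, the same formula with the $\la$-connection $p$-curvature produces a regular vector field on $\wtld\Loc\sm$ that restricts to $t^p\xi_0$ over $\Gm$. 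The exponent $p$ thus comes from the intrinsic scaling of $p$-curvature, not from a reparametrization of the base.
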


\begin{proof}
  We need to prove that $t^p \xi_0$ extends to $\wtld\Loc\sm$.  Recall that the value of~$\xi_0$ at a point corresponding to a bundle with connection (or $\O$-coherent $\D$-module) $\cM = (\cE,\na)$ is given by the infinitesimal deformation $\overline \cM = (\overline\cE,\overline\na)$  of~$\cM$ constructed in~\ref{subs:alt constr}.

  We will think of~$\overline \cM$ as an extension of~$\cM$ by itself.  Recall that the construction of~$\overline \cM$ in~\ref{subs:alt constr} uses the splitting of the gerbe~$\g$ on the 1st infinitesimal neighborhood of zero section in $T^*C\fr$. This splitting can be thought of as an extension of $\D$-modules $0\to \cT_C\fr[C] = \cT_C^{\tsr p} \to \cM_0 \to \O_C \to0$. Denote by $v \in \Ext^1_\D (\O_C, \cT_C\fr[C])$ the class of this extension.  We will also need the $p$-curvature of~$\cM$ thought of as a map of $\D$-modules $\curv_p(\cM) \colon \cM \tsr \cT_C\fr[C] \to \cM$. By unwinding the definition of~$\overline \cM$, it is not hard to check the following:

  \begin{clm}
    The class of~$\overline \cM$ in $\Ext^1_\D (\cM,\cM)$ is given by
    \begin{equation}\label{eq:class of M-bar}
        \on{class} (\overline \cM) = \curv_p(\cM) \cdot (\id_\cM \tsr v)
    \end{equation}
    where $\cdot$ denotes the composition $\Hom_\D (\cM \tsr \cT_C\fr[C], \cM) \tsr \Ext^1_\D (\cM, \cM \tsr \cT_C\fr[C]) \to \Ext^1_\D (\cM,\cM)$.  More precisely, the exact sequence $0\to \cM \to \overline\cM \to \cM \to0$ is canonically isomorphic to the pullback of $\cM \tsr (0\to \cT_C\fr[C] \to \cM_0 \to \O_C \to0)$ by $\curv_p(\cM)$.
  \end{clm}

  In order to construct the vector field~$\tilde\xi_0$ on $\wtld\Loc\sm$ extending $t^p \xi_0$, recall the notion of $p$-curvature of a $\la$-connection from~\ref{lconn}.  It allows to extend the above construction of~$\overline \cM$ to $\la$-connections to get the desired vector field~$\tilde\xi_0$.  (We just need to multiply the connection on~$\cM_0$ by~$\la$ and use tensor product of $\la$-connections.)
\end{proof}

\begin{proof}[Proof of Lemma~\ref{lem:vanishing on fibers}]
  From Lemma~\ref{lem:degree leq p} and formula~\eqref{eq:degree of tth} we see that $\sP(\b_0) = \tilde\th - \delta(\th_0) \in F^{p+1}\eF(\Loc\sm)$.  So we must have
  \begin{equation}\label{eq:b0 in F1}
    \b_0 \in F^1\Om^1((\Loc\sm)\fr)
  \end{equation}
  (otherwise $\sP(\b_0) \bx 1_{\O(\Gm)}$ extended to $\wtld\Loc\sm$ would have a pole of order $\ge 2p > p+1$ along $\Higgs$).  Let $\rY$ be the open part of~$\Higgs$ given by $\rY := \Higgs\sm^{[p\ZZ]} = \wtld\Loc\sm \xx_{\AA^1} \{0\}$.  Then we get a 1-form $\b_1$ on~$\rY\fr$ obtained by extending the (relative) 1-form $t\b_0$ from $(\Loc\sm)\fr \x \Gm$ to $(\wtld\Loc\sm)\fr$ and then restricting to~$\rY\fr$.  The form $\b_1$ has weight~$1$ with respect to the $\Gm$-action on $\rY\fr \ctd \Higgs\fr$.  For $b \in \Bun$ as in the statement of Lemma~\ref{lem:vanishing on fibers}, \eqref{eq:b0 in F1} implies that $\b_0|_{\Loc_b}$ is a translation-invariant form on $\Loc_b$, and the restriction of~$\b_1$ to the fiber $\Higgs_b$ of $\Higgs$ at~$b$ is the corresponding translation-invariant form on~$\Higgs_b$ (recall that $\Loc_b$ is an affine space over the vector space $\Higgs_b$).

  Denoting by $\Eu$ the differential of the $\Gm$-action on~$\rY$, we get a function $F = \iota_{\Eu} \b_1$ on~$\rY\fr$ of $\Gm$-weight $1$.  The restriction of~$F$ to $\Higgs_b$ is a linear function whose differential is $\b_1|_{\Higgs_b}$.  Since the projection $\rY \to \rB$ is proper over~$\bo$, the restriction of~$F$ to each component of~$\Higgs$ must be a pullback of a function~$F'$ on~$\rB$. This function must also have degree~$1$ with respect to the standard $\Gm$-action on~$\rB$. We want to show that $F' = 0$ and hence $F = 0$.  This will imply that $\b_1|_{\Higgs_b} = 0$ and therefore $\b_0|_{\Loc_b} = 0$.

  Since we know that $\b_0$ descends to~$\rB$, the function~$F'$ does not depend on the choice of connected component of~$\Higgs$. Now consider the Serre duality involution~$\sigma$ on~$\Lochf$.  Via the identification $\Lochf \isoto \Loc$ given by $\cM \mapsto \cM \tsr \omega^{\tsr(p-1)/2}$ it corresponds to an involution on~$\Loc$ given by $\cM \mapsto \cM\chk \tsr \omega^{\tsr p}$ which we will also denote by~$\sigma$.  It is easy to see that the determinant line bundle with connection $\Lna_{\det}$ is invariant under~$\sigma$, hence so is its curvature $\Oloc$ and extended curvature~$\tilde\th$. The vector field~$\xi_0$ can also be shown to be invariant under~$\sigma$.  So the $1$-form $\th_0$ is $\sigma$-invariant as well.  Recalling that $\sP(\b_0) = \tilde\th - \delta(\th_0)$, we see that $\b_0$ must also be $\sigma$-invariant.  Thus for the function~$F$ we get that it is invariant under an analogous involution on~$\Higgs$.  But then for~$F'$ it means that it should be invariant under the action of $-1 \in \Gm$ on~$\rB$, whereas in the preceding paragraph we saw it is \emph{anti-}invariant under the same element.  The desired equality $F'=0$ follows.
\end{proof}

\begin{rem}
  There is another proof of Proposition~\ref{prop Jer} which works for any $p>2$.  The idea is to reduce it to the corresponding local statement about the connection on the pullback oh determinant line bundle on the affine Grassmannian $\Gr_G$ to the twisted cotangent bundle associated with this determinant bundle (cf.\ Remark~\ref{appx|rem:tw cot Gr}).  To construct an analog of the vector field $\xi_0$, one should choose a splitting of the gerbe $\dd_D$ (classifying $\D$-modules on $D := \Spec\k\dbkts t$) on the 1st infinitesimal neighborhood of the zero section in $T^*D\fr$, and also choose a trivialization of the extension $\cM_0$ defined before formula~\eqref{eq:class of M-bar}, as an $\O$-module.  Every $G$-bundle on~$C$ together with a trivialization on the formal neighborhood of a point $x\in C$ gives a map $\Gr_G\to\Bun$ and a correspondence between the twisted cotangent bundles, which allows to deduce the global statement from the local one.  To prove the local statement, one can first show that the resulting vector field on $\tilde T^*_{\Ldet^D}\Gr_G$ is Liouville, and prove an analog of the identity~\eqref{tth id}.  The rest of the argument is similar to the global one presented here but the generic transversality lemma (the analog of Lemma~\ref{lem transv}) is much easier for the local $p$-Hitchin map and can be proved in any characteristic.

  Since Proposition~\ref{prop Jer} is the only place in the proof of the main theorem (Theorem~\ref{main_thm}), the stronger version of Proposition~\ref{prop Jer} would allow to relax the conditions on $p$ in the theorem to just $p>2$.
\end{rem}

\appendix
\addtocontents{toc}{\SkipTocEntry}
\section
    {Twisted cotangent bundle to the moduli stack of coherent sheaves}
\addtocontents{toc}{\protect\contentsline{section}%
  {\protect\tocsection{Appendix}{\thesection}%
    {Twisted cotangent bundle to $\Coh$}}%
  {\thepage}{appendix.\thesection}}
\label{appx}

The main goal of this appendix is to show that the twisted cotangent bundle to the stack of coherent sheaves on a smooth projective curve is identified with the stack of half-form twisted coherent $\D$-modules on that curve  and establish some properties of the corresponding tautological connection on a line bundle this cotangent bundle.  This is a characteristic-independent statement, except that we have to assume that the characteristic is not~2.  In fact, we prove it for arbitrary base scheme~$S$ defined over~$\ZZ[1/2]$.
In characteristic~$0$ these statements are classical (see, e.g., \cite[Chapter~IV]{Faltings}; in particular, Theorem~\ref{appx|main thm} corresponds to Lemma~IV.4 in \emph{loc.~cit.}).

We will understand all derived categories in the higher-categorical (\ie $(\infty,1)$ or DG) sense, so that it makes sense to talk about homotopies between (1\nbhp-)morphisms. (In fact, the $(2,1)$-categorical level would suffice: all our morphism spaces will be 1-groupoids.)

\subsection{Twisted cotangent bundles to stacks}\label{appx|subs:general}
Let $S$ be a scheme and  $\rX \xra\pi S$ a smooth Artin stack over $S$. For an $R$-point $x$ of $\rX$ where $R$ is a commutative ring, consider the groupoid $\gT_{\rX/S,x}$ of all dotted arrows in the diagram
\begin{equation}\label{appx|dd}
    \cxym{\Spec R \ar[r]^x \ar[d]_\iota & \rX \ar[d]^\pi \\
        \Du_R \ar@{.>}[ur]|-{\tilde x} \ar[r]_{\pi \circ x \circ p} & S}
\end{equation}
where $\Du_R = \Spec (R[\eps]/\eps^2)$ and $\iota\colon \Spec R \to \Du_{R}$, $p\colon \Du_R \to \Spec R$ are the natural morphisms. This is an $R$-linear Picard groupoid, so it corresponds to a 2-step complex of $R$-modules $\cT_{\rX/S,x}^\blt$ living in degrees $0$ and $-1$. This complex is perfect, compatible with derived base change and satisfies descent, and therefore defines a perfect object $\Tblt_{\rX/S} \in D^b(\rX)$. It is called the {\em tangent complex} of $\rX$ over $S$. The {\em cotangent stack} is then defined as
\[
    T^*(\rX/S) := \Spec_\rX (\Sym_{\O_\rX} \cH^0(\Tblt_{\rX/S}))
\]
where $\cH^0$ is taken with respect to the natural t-structure on $D^b(\rX)$.

\ncmd{\TXL}{\tTblt_{\rX/S,\L}}
Now let $\L$ be a line bundle on $\rX$ and $\rP \to \rX$ the corresponding principle $\Gm$-bundle. Then $\Tblt_{\rP/S}$ is a $\Gm$-equivariant complex and therefore descends to a complex $\TXL$ on~$\rX$ which fits into an exact triangle
\begin{equation}\label{appx|tT ex.tr}
\O_\rX \xra{i} \TXL \to \Tblt_{\rX/S} \xra\delta \O_\rX[1] \,
\end{equation}
We will sometimes refer to~$\TXL$ as the {\em extended tangent complex}. Now define the {\em twisted cotangent stack} $\tilde T^*_\L(\rX/S)$ as
\begin{equation}\label{appx|def tT*L}
   \tilde T^*_\L(\rX/S) := \Spec_\rX (\Sym_{\O_\rX} \cH^0(\TXL)) \xx_{\AA^1_S} \{1\}_S
\end{equation}
where the morphism from the first factor to $\AA^1_S$ is induced by $i$.

\begin{rem}\label{appx|rem:tT* interp}
  There are several alternative interpretations of the stack $\tilde T^*_\L(\rX/S)$:
  \begin{enumerate}
    \item It is the spectrum of the quotient of $\Sym \cH^0(\Tblt_{\rX/S})$ by the ideal generated by $1 - i(1_{\O_\rX})$.
    \item Its $R$-points lying over $x\colon \Spec R \to \rX$ are given by ``splittings'' of the pullback under~$x$ of triangle~\eqref{appx|tT ex.tr}.
    \item\label{appx|null-htpy} These splittings are the same as null-homotopies of~$\delta_x$.
  \end{enumerate}
\end{rem}

There is a closed substack $\rZ \ctd \rX$ consisting of points $x \in \rX$ where $i$ acts non-trivially on local cohomology. Then the map $\tilde T^*_\L(\rX/S) \to \rX$ factors through~$\rZ$ and over $\rZ$ it looks like a torsor for $T^*(\rX/S)$.

Another way to define the complex $\TXL$ is as follows. For an $R$-point~$x$ of~$\rX$ let $B\GG_a(R)$ denote the classifying groupoid for the group $\GG_a(R) = (R,+)$. We will define a functor $\delta'\colon \gT_{\rX/S,x} \to B\GG_a(R)$. Namely, for any $\tilde x\colon \Du_{R} \to \rX$ as in~\eqref{appx|dd}, we set $\delta'_x(\tilde x)$ to be the torsor of isomorphisms $\tilde x^*\L \isoto p^*x^*\L$ whose restriction to $\Spec R \ctd \Du_{R}$ is $\id_{x^*\L}$. The action of $\GG_a(R)$ on $\delta'_x(\tilde x)$ is given by the composition $\GG_a(R) \isoto 1+\eps R \ctd (R[\eps]/\eps^2)^\x = \O(\Du_R)^\x \isoto \Aut(\tilde x^*\L)$. The map $\delta'_x$ corresponds to a map $\delta_x\colon \Tblt_{\rX/S,x} \to R[1]$. The maps $\delta_x$ for all $R$ and $x$ glue to a map $\delta\colon \Tblt_{\rX/S} \to \O_\rX[1]$. Then $\TXL$ can be reconstructed as ``the'' cone of $\delta$.%
    \footnote{In order to reconstruct $\TXL$ canonically, one needs to understand the derived categories in the $(\infty,1)$-categorical (or DG) sense.}
Below we write $\delta^\L$ instead of $\delta$ to show explicitly the dependence of~$\delta$ on~$\L$.

We will need the following lemma whose straightforward proof is omitted.

\begin{lem}\label{appx|lem:delta base chge}
  Let $f\colon \rX \to \rY$ be a morphism of smooth Artin stacks and $\L$ a line bundle on~$\rY$. Then we have a commutative diagram
  \[
      \cxym{\Tblt_{\rX/S} \ar[d]_{df} \ar[r]^{\delta^{f^*\L}} & \O_\rX [1] \ar@{=}[d] \\
            f^*\Tblt_{\rY/S}          \ar[r]^{f^*\delta^\L}   & f^*\O_\rY [1]  }
  \]
\end{lem}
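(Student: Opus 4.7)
The plan is to verify the commutativity pointwise using the explicit description of $\delta$ via the functor $\delta'$ from the construction preceding the lemma, and then descend.

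Fix a commutative ring~$R$ and an $R$-point $x\colon \Spec R \to \rX$, and set $y := f\circ x$, an $R$-point of~$\rY$. Post-composition with~$f$ defines a functor of Picard groupoids $F_x\colon \gT_{\rX/S,x} \to \gT_{\rY/S,y}$ sending a lift $\tilde x\colon \eD_R \to \rX$ to $\tilde y := f\circ\tilde x$. Under the equivalence between $R$-linear Picard groupoids and 2-term complexes in degrees $[-1,0]$, the functor $F_x$ corresponds exactly to the pullback at~$x$ of the differential $df\colon \Tblt_{\rX/S} \to f^*\Tblt_{\rY/S}$; this is essentially the construction of~$df$.

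Next, I would compare the two functors $\gT_{\rX/S,x} \to B\GG_a(R)$ given respectively by $\delta'^{f^*\cL}_x$ and by $\delta'^\cL_y \circ F_x$. The first sends $\tilde x$ to the $\GG_a(R)$-torsor of isomorphisms $\tilde x^*(f^*\cL) \isoto p^*x^*(f^*\cL)$ restricting to $\id$ on $\Spec R$; the second sends $\tilde x$ to the torsor of isomorphisms $\tilde y^*\cL \isoto p^*y^*\cL$ with the same restriction condition. The tautological identifications $\tilde y^*\cL = (f\circ\tilde x)^*\cL \isom \tilde x^*(f^*\cL)$ and $y^*\cL \isom x^*(f^*\cL)$ identify these two torsors canonically, and the $\GG_a(R)$-actions match, since in both cases the action is induced by the same embedding $\GG_a(R) \isom 1+\eps R \into \O(\eD_R)^\times$ acting on $\End(\tilde x^*(f^*\cL))$. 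Translating this natural isomorphism of functors back to 2-term complexes yields a canonical homotopy between $\delta^{f^*\cL}_x$ and $(f^*\delta^\cL)_x \circ df_x$ as maps $\Tblt_{\rX/S,x} \to R[1]$.

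All of the above is plainly functorial in the pair $(R,x)$, and it satisfies smooth descent; consequently the pointwise homotopies assemble into the required commutative square in $D^b(\rX)$. I expect the only real obstacle — and the reason the proof is called \emph{straightforward} rather than tautological — to be the management of $(\infty,1)$-categorical coherence: one must check that the canonical identifications $\tilde y^*\cL \isom \tilde x^*(f^*\cL)$ are natural enough to define a genuine natural isomorphism of the two functors into $B\GG_a(R)$, rather than merely an unrelated family of isomorphisms of torsors. But since every identification in play comes from the universal pullback-composition isomorphism for pullbacks of line bundles along composable morphisms of stacks, the coherence is forced and requires no additional choices.
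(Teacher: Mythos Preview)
Your argument is correct and is precisely the ``straightforward'' verification the paper has in mind; indeed, the paper explicitly omits the proof, so there is nothing to compare against beyond the fact that your unwinding of $\delta'$ pointwise and the subsequent descent is the canonical route dictated by the definitions in~\ref{appx|subs:general}. Your closing remark about coherence is apt but not a genuine obstacle: the identifications $\tilde x^*(f^*\cL)\isom(f\circ\tilde x)^*\cL$ are the components of the pseudo-functoriality constraint for pullback of quasi-coherent sheaves, hence automatically natural in~$\tilde x$.
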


\subsection{The stack \texorpdfstring{$\Coh(C)$}{Coh(C)} and its twisted cotangent bundle}
Now let $C\to S$ be a smooth proper family of algebraic curves. We work with schemes over $S$ throughout, so we will often drop ``$S$'' from the notation writing $\cT_\rX$, $T^*\rX$, $\x$, etc.\ instead of $\cT_{\rX/S}$, $T^*(\rX/S)$, $\xx_S$,~etc.  If $R$  is a ring, and $s \colon \Spec R \to S$ is an $R$-point of~$S$, we will denote by $C_s$ or $C_R$ the base change of~$C$ by~$s$, that is, $C_s = C_R := C \xx_{S,s} \Spec R$.

We consider the stack $\Coh(C)$ of coherent sheaves on fibers of $C$.  Its groupoid of $R$-points is given by
\[
   \Coh(C)(R) = \left\{(s,\cF) \biggm|
   \begin{matrix} s\in S(R) ; \\
   \text{$\cF$ is a $(\Spec R)$-flat coherent sheaf on $C_s$}
   \end{matrix}\right\} .
\]
Below we abbreviate $\Coh = \Coh(C)$. Consider the {\em determinant bundle} $\Ldet$ on $\Coh$. Its fiber at an $R$-point $x$ of~$\Coh$ corresponding to a coherent sheaf~$\cF_x$ on $C_R$ is given by
\[
    (\Ldet)_x = \det_R \RGam(C_R,\cF_x) .
\]

From now on we assume that
\begin{equation}\label{appx|2inv}
    \text{2 is invertible on $S$,\quad i.e.,\quad} 2\cdot 1_{\O(S)} \in \O(S)^\x. \tag{$*$}
\end{equation}

We will be interested in the twisted cotangent stack corresponding to $\Ldet$. Namely we will prove the following.

\begin{thm}\label{appx|main thm}
  There is a canonical isomorphism of stacks over $\Coh$:
  \begin{equation}\label{appx|main eq}
    \tilde T^*_{\Ldet} \!\Coh  \isom  \Connhf
  \end{equation}
  where $\Connhf$ is the moduli stack of $\O_C$-coherent modules over the algebra $\D_{C,\ohf}$ of differential operators in $\omega_C^{\tsr1/2}$.
\end{thm}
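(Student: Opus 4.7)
The plan is to construct a canonical morphism $\Phi\colon \Connhf \to \tilde T^*_\Ldet \Coh$ over $\Coh$ and then show it is an isomorphism by comparing fiberwise torsor structures. For an $R$-point $(\cF, s)$ of $\Coh$, the tangent complex is $\Tblt_{\Coh, (\cF, s)} \isom \RHom_{C_R}(\cF, \cF)[1]$, so by Remark~\ref{appx|rem:tT* interp}(\ref{appx|null-htpy}), an $R$-point of $\tilde T^*_\Ldet \Coh$ over $(\cF, s)$ is a null-homotopy of the map $\delta^\Ldet_{(\cF,s)}\colon \RHom_{C_R}(\cF, \cF) \to R$. Using relative Serre duality $\RHom_{C_R}(\cF, \cF)^\vee \isom \RHom_{C_R}(\cF, \cF\tsr\omega_{C_R/R})[1]$ on the smooth proper curve $\pi\colon C_R \to \Spec R$, the map $\delta^\Ldet_{(\cF,s)}$ corresponds to a class $\alpha^\Ldet_{(\cF,s)} \in \Ext^1_{C_R}(\cF, \cF\tsr\omega_{C_R/R})$, and null-homotopies of $\delta^\Ldet_{(\cF,s)}$ correspond bijectively to null-homotopies of $\alpha^\Ldet_{(\cF,s)}$ in the derived category of $C_R$.

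\textbf{Construction of $\Phi$.} The key technical input is an Illusie-type formula identifying the Atiyah class of the determinant bundle:
\[
  \alpha^\Ldet_{(\cF, s)} \;=\; \on{At}(\cF) - \tfrac{1}{2}\on{At}(\omega_{C_R/R}) \tsr \id_\cF
  \;\in\; \Ext^1_{C_R}(\cF, \cF\tsr\omega_{C_R/R}),
\]
where the coefficient $\tfrac{1}{2}$ makes sense thanks to~\eqref{appx|2inv}. By construction of the TDO $\D_{C, \omega^{1/2}}$, a $\D_{C_R, \omega^{1/2}}$-module structure on $\cF$ (that is, an $R$-point of $\Connhf$ over $(\cF, s)$) is exactly a null-homotopy of this ``half-twisted Atiyah class.''  Pulling back via the Serre duality isomorphism above produces the required null-homotopy of $\delta^\Ldet_{(\cF, s)}$; the assignment is functorial in $R$ (use Lemma~\ref{appx|lem:delta base chge} for the compatibility under base change), and thus defines~$\Phi$. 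This construction may be viewed as a universal version of Proposition~\ref{conn det}: the $\D_{C, \omega^{1/2}}$-structure trivializes the infinitesimal variation of $\Ldet$ along all deformations of $\cF$ in $\Coh$.

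\textbf{$\Phi$ is an isomorphism.} Over any $(\cF, s) \in \Coh(R)$ where the fibers are non-empty, both $\Connhf(\cF, s)$ and $\tilde T^*_\Ldet \Coh(\cF, s)$ are torsors over the same abelian group $A := \Hom_{\O_{C_R}}(\cF, \cF\tsr\omega_{C_R/R})$: for $\Connhf$ this is immediate from the TDO structure (two $\omega^{1/2}$-connections differ by an $\O$-linear endomorphism-valued 1-form), while for $\tilde T^*_\Ldet \Coh$ it follows from the identification of the cotangent space $T^*_{(\cF,s)}(\Coh/S) = \Hom_R(\Ext^1_{C_R}(\cF, \cF), R) \isom A$ via relative Serre duality. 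A direct unwinding of the definitions shows that $\Phi$ intertwines these two actions under this identification, so $\Phi$ is an isomorphism of torsors. Simultaneous non-emptiness of the two fibers is automatic from the fact that the obstruction to each is (the image of) the single class $[\alpha^\Ldet_{(\cF, s)}]$.

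\textbf{Main obstacle.} The principal technical work is establishing the Illusie-type formula above: one must compute the Atiyah class of $\det R\pi_*\cF$ universally over $\Coh$ (not only for trivial base families as in Proposition~\ref{conn det}) and pin down the coefficient $\tfrac{1}{2}$, which is the origin both of the $\omega^{1/2}$-twist and of assumption~\eqref{appx|2inv}. Once this formula is in hand, the torsor compatibility and simultaneous non-emptiness are a routine diagram chase through the definitions of Serre duality, the Atiyah class, and the TDO $\D_{C, \omega^{1/2}}$.
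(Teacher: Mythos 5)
You have set up the problem exactly as the paper does---identify $R$-points of $\tilde T^*_{\Ldet}\!\Coh$ over $x$ with null-homotopies of $\delta_x$, Serre-dualize to a class $\a_x\in\Ext^1(\cF_x,\cF_x\tsr\omega_{C_R/R})$, and identify splittings of the corresponding extension with $\D_{C,\omega^{1/2}}$-module structures---but the step that carries essentially all of the content, your ``Illusie-type formula,'' is asserted rather than proved. That formula is precisely Proposition~\ref{appx|prop:desc Phi}: the extension $\Phi_s(\cF)$ determined by $\a_x$ is the half-form--twisted jet extension $q_{R*}\bigl(p_R^*\cF\tsr(p^*\omega_C\tsr q^*\omega_C^{\tsr-1})^{\tsr1/2}\bigr)$ on $\Delta_C^{\angs2}$, and proving it is where the whole appendix goes: one must first extend $\cF\mapsto\Phi_s(\cF)$ to non-invertible morphisms (Lemma~\ref{appx|lem:Phi fnctr}, via additivity of $\Ldet$ under direct sums), then establish the formula for relative skyscrapers (Lemma~\ref{appx|lem:skyscraper}) and for line bundles (Lemma~\ref{appx|lem:line bundle}), and finally treat arbitrary $R$-flat coherent sheaves by resolutions and functoriality. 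Labeling this the ``main obstacle'' and the rest ``a routine diagram chase'' does not discharge it; as written, your argument establishes only the (correct) observation that the theorem is equivalent to such a formula, which is where the paper's proof begins, not where it ends.

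A second, related gap: matching classes in $\Ext^1(\cF,\cF\tsr\omega_{C_R/R})$ only produces a \emph{non-canonical} isomorphism of the two extensions, hence of the two fibers as $\Hom(\cF,\cF\tsr\omega_{C_R/R})$-torsors, while the theorem asserts a canonical isomorphism of stacks compatible with base change. The paper confronts exactly this inside Lemma~\ref{appx|lem:skyscraper}: after the $\Ext^1$ computation one is left with a $\GG_a$-torsor $\cA$ of identifications over $\Coh^{[1]}\isom C\x B\Gm$, and trivializing it canonically requires the Serre-duality involution $\sigma$ together with the assumption~\eqref{appx|2inv}---so $2$-invertibility enters twice, once for the square root on $\Delta_C^{\angs2}$ (your coefficient $\tfrac12$) and once to kill the torsor via $\tilde\cA\isom-\tilde\cA$. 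Your ``torsor-intertwining'' step silently assumes this canonicity, and pinning down the normalization (including the sign of the $\omega$-term, which your conventions leave floating) is exactly the non-routine part of the argument.
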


\ncmd{\tTCoh}{\tTblt\SmSub[,]{\Coh}}
Denote $\tilde \cT^\blt_{\Coh,\Ldet}$ by $\tTCoh$. Now it follows from the formula~\eqref{appx|def tT*L} that the datum of an $R$-point of $\tilde T^*_{\Ldet}\!\Coh$ lying over $x \in \Coh(R)$ is equivalent to the datum of a nilhomotopy of the map
\begin{equation*}
    \delta_x\colon T^\blt_{\Coh,x} \to R .
\end{equation*}
We can therefore reduce the study of~$\tilde T^*_{\Ldet} \!\Coh$ to the study of~$\delta_x$.

It is known\footnote{At least for the open part of~$\Coh$ parametrizing vector bundles; but the same proof works for all~of $\Coh$.} that the tangent complex $\Tblt_{\Coh,x}$ to $\Coh$ at a point $x \in \Coh(R)$ is canonically isomorphic to $(\REnd \cF_x)[1] \in D^b(R\hmod)$. So we have to study the map $\delta_x[-1] \colon \REnd \cF_x \to R$. Using the Serre duality, we get a map
\begin{equation*}
    \a_x\colon \cF_x \to \cF_x \tsr \omega_{C_R/R} [1] .
\end{equation*}
The map $\a_x$ corresponds to an extension
\begin{equation}\label{appx|Phi_x exseq}
    0 \to \cF_x \tsr \omega_{C_R/R} \to \Phi_s(\cF_x) \to \cF_x \to 0 .
\end{equation}
It is clear that, for any~$R$ and a map $s\colon \Spec R \to S$, the assignment $\cF_x \mapsto \Phi_s(\cF_x)$ defines a functor from the {\em groupoid} of $R$-flat coherent sheaves on $C_R$ to itself compatible with base-change of~$R$. In other words, we have a morphism of stacks $\Phi\colon \Coh \to \Coh$.

We will show that $\Phi_s$ extends to {\em non-invertible} morphisms of sheaves and, moreover, has the following description:

\begin{prop} \label{appx|prop:desc Phi}
  Let $\Delta_C^{\angs2}$ be the 2nd infinitesimal neighborhood of the diagonal $\Delta_C \ctd C \x C$ and $p,q \colon \Delta_C^{\angs2} \toto\nlbk C$ the restriction of the two projections $C\x C \toto C$. Denote also by $p_R$, $q_R$ their base-change by $s\colon \Spec R \to\nlbk S$. We have a canonical isomorphism
  \begin{equation}\label{appx|eq:desc Phi}
    \Phi_s(\cF) \isom q_{R*}
                \bigl( p_R^*\cF \tsr (s \x \id_{\Delta_C^{\angs2}})^* (p^*\omega_C \tsr q^*\omega_C^{\tsr{-1}})^{\tsr1/2} \bigr)
  \end{equation}
  where $^{\tsr1/2}$ denotes the canonical square root of the line bundle on $\Delta_C^{\angs2}$ which is trivial on~$\Delta_C$. (It exists and is essentially unique due to the assumption~\eqref{appx|2inv}.)
\end{prop}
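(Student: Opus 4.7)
The plan is to identify both $\Phi_s(\cF)$ and the proposed right-hand side
$\Phi'_s(\cF) := q_{R*}\bigl(p_R^*\cF \tsr L^{1/2}\bigr)$, where
$L = (p^*\omega_C \tsr q^*\omega_C^{\tsr{-1}})|_{\Delta_C^{\angs2}}$, as canonical
extensions of $\cF$ by $\cF \tsr \omega_{C/R}$; to show that their classes in
$\Ext^1_{C_R}(\cF, \cF \tsr \omega_{C/R})$ coincide; and then to promote this to a
natural isomorphism of functors on $\Coh$.

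First I would compute $\Phi_s(\cF)$ as an Atiyah-type class. Using the
deformation-theoretic identification $\Tblt_{\Coh,x} \isom (\REnd_{C_R}\cF_x)[1]$ and
the standard formula for the tangent map of $\det \RGam$, the map
$\delta_x^{\Ldet}[-1]\colon \REnd_{C_R}\cF_x \to R$ is the derived trace. Applying
relative Serre duality on $C_R \to \Spec R$,
$\RHom_R(\REnd_{C_R}\cF_x, R) \isom \RHom_{C_R}(\cF_x, \cF_x \tsr \omega_{C/R})[1]$,
carries the trace to a class
$\a_x \in \Ext^1_{C_R}(\cF_x, \cF_x \tsr \omega_{C/R})$ which represents the extension
\eqref{appx|Phi_x exseq} defining $\Phi_s(\cF_x)$. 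A Grothendieck--Riemann--Roch
computation for $c_1(\det R\pi_*\cE)$ on the curve family, translated into Atiyah
classes, then yields
$\a_x = \mathrm{at}(\cF_x) + \tfrac12\,\mathrm{at}(\omega_C)\cdot \id_{\cF_x}$
(modulo sign conventions).

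Next I would analyze $\Phi'_s$. Since $L|_{\Delta_C}$ is canonically trivial, so is
its canonical square root $L^{1/2}|_{\Delta_C}$; together with
$\cI/\cI^2 \isom \omega_C$ for the diagonal conormal, the ideal filtration on
$\O_{\Delta_C^{\angs2}}$ tensored with $p_R^*\cF \tsr L^{1/2}$ and pushed forward by
$q_{R*}$ (an isomorphism over $\Delta_C$) yields a canonical extension
$0 \to \cF \tsr \omega_{C/R} \to \Phi'_s(\cF) \to \cF \to 0$. A direct \v{C}ech
computation in local coordinates on $C$ identifies $[L] = \mathrm{at}(\omega_C)$ in
$H^1(C, \omega_C)$, so $[L^{1/2}] = \tfrac12\,\mathrm{at}(\omega_C)$, and the extension
class of $\Phi'_s(\cF)$ is $\mathrm{at}(\cF) + \tfrac12\,\mathrm{at}(\omega_C)\cdot\id_\cF$,
matching $\a_x$.

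Finally, the coincidence of extension classes at each $\cF$ must be upgraded to a
natural isomorphism $\Phi_s \isom \Phi'_s$ of endofunctors of $\Coh$. Functoriality
of $\Phi'_s$ in arbitrary morphisms of $\cF$ is manifest from its definition, and
the originally groupoidal $\Phi_s$ extends to all morphisms via the naturality
of~$\delta$ with respect to base change (Lemma~\ref{appx|lem:delta base chge})
applied to the universal coherent sheaf on $\Coh \x C$. The main obstacle is
careful bookkeeping: the GRR-derived formula for~$\a_x$ and the \v{C}ech formula
for~$[L^{1/2}]$ must be carried out with consistent sign conventions for the
identifications $\cI/\cI^2 \isom \omega_C$ and for the half-density twist;
verifying that the two formulas agree (and do not differ by
$\mathrm{at}(\omega_C)\cdot\id_\cF$) is the key delicate step, and this is exactly
where the hypothesis~\eqref{appx|2inv} that $2$ is invertible enters.
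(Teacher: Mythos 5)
Your plan stalls at the step you dismiss as bookkeeping. Matching the two classes in $\Ext^1_{C_R}(\cF,\cF\tsr\omega_{C/R})$ (via the trace/Serre-duality description of $\delta_x$ and a \v{C}ech computation for the half-form twist) only shows that $\Phi_s(\cF)$ and $q_{R*}\bigl(p_R^*\cF\tsr L^{1/2}\bigr)$ are \emph{abstractly} isomorphic as extensions of $\cF$ by $\cF\tsr\omega_{C/R}$: the set of isomorphisms of extensions is a torsor under $\Hom_{C_R}(\cF,\cF\tsr\omega_{C/R})$, which is large for a general $\cF$ on a curve of genus $>1$, so equality of classes is very far from producing the \emph{canonical} isomorphism the proposition asserts (and which the proof of Theorem~\ref{appx|main thm} actually uses, since splittings of~\eqref{appx|Phi_x exseq} must be matched with $\D_{C,\omega^{1/2}}$-module structures functorially and compatibly with base change). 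This is precisely where the paper spends its effort: it first proves the statement for relative skyscrapers, where the residual ambiguity is a $\GG_a$-torsor $\cA$ on $\Coh^{[1]}\isom C\x B\Gm$, trivialized canonically using the Serre-duality involution $\sigma$ together with $\sigma^*\cA\isom-\cA$ and the hypothesis~\eqref{appx|2inv}; it then treats line bundles not by comparing classes but by \emph{constructing} a map $g\colon\Phi_s(\cL)\to\tilde\Phi_s(\cL)$ via the diagonal family $S'=C_R$, the morphism $\cL_{S'}\to\Delta_*\cL$ and adjunction; and finally it reduces arbitrary $\cF$ to line bundles by local resolutions and functoriality. In your write-up the role of $2\in\O(S)^\x$ is located in sign conventions, whereas its essential use (beyond the existence of $L^{1/2}$) is exactly in killing this torsor of isomorphisms.

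A second, smaller gap: you assert that $\Phi_s$ extends to non-invertible morphisms of sheaves ``via the naturality of $\delta$ (Lemma~\ref{appx|lem:delta base chge}) applied to the universal coherent sheaf.'' That lemma concerns pullback of $\delta$ along morphisms of stacks and line bundles; a non-invertible map $\cF_x\to\cF_y$ is not a morphism of points of $\Coh$, so nothing about it follows from base-change naturality. The paper needs a separate argument (Lemma~\ref{appx|lem:Phi fnctr}): additivity of $\Phi_s$ under direct sums, deduced from $\pr_1^*\Ldet\tsr\pr_2^*\Ldet\isom\Sigma^*\Ldet$, and then the action of the unipotent automorphisms $a_f$ of $\cF_x\oplus\cF_y$ to define $\Phi_s(f)$. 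Without this, your final reduction (and indeed any resolution argument, which the paper also relies on) cannot get off the ground.
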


Denote by $\tilde\Phi_s(\cF)$ the right-hand side of~\eqref{appx|eq:desc Phi}.

\begin{proof}[Proposition~\ref{appx|prop:desc Phi} implies Theorem~\ref{appx|main thm}] \sloppy
  According to Remark~\ref{appx|rem:tT* interp}, point~\ref{appx|null-htpy}, for any $x\colon \Spec R \to \Coh$ the elements of $\Hom_{\Coh} (\Spec R, \tilde T^*_{\Ldet} \!\Coh)$ correspond bijectively to null-homotopies of the map~$\delta_x$. By the Serre duality isomorphism, these are the same as null-homotopies of~$\a_x$ and therefore the same as splittings of~\eqref{appx|Phi_x exseq}. Now, using~\eqref{appx|eq:desc Phi}, it is not hard to see that these splittings correspond to $(s \x \id_C)^* \D_{C,\ohf}$-module structures on~$\cF_x$.
\end{proof}

\begin{lem}\label{appx|lem:Phi fnctr}
  For any $s\colon \Spec R \to S$ the map of groupoids $\Phi_s\colon \Coh(R) \to \Coh(R)$ extends to a self-functor of the category of $R$-flat coherent sheaves on $C \xx_s \Spec R$. In other words, $\Phi_s$ extends to non-invertible morphisms.
\end{lem}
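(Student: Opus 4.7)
The plan is to deduce the lemma directly from Proposition~\ref{appx|prop:desc Phi} by reading the right-hand side of its formula~\eqref{appx|eq:desc Phi} as a functor on the full category of $R$-flat coherent sheaves on~$C_R$. Writing $L := p^*\omega_C \tsr q^*\omega_C^{\tsr{-1}}$ on~$\Delta_C^{\angs2}$, the formula
\[
    \tilde\Phi_s(\cF) = q_{R*}\bigl( p_R^*\cF \tsr (s \x \id_{\Delta_C^{\angs2}})^* L^{\tsr1/2}\bigr)
\]
is the composition of the flat pullback~$p_R^*$, tensoring with a fixed line bundle, and pushforward along the finite morphism~$q_R$---each a functor on all $R$-flat coherent sheaves, not merely on their groupoid. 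Thus $\tilde\Phi_s$ is automatically a self-functor on that category. Using the canonical isomorphism $\Phi_s(\cF) \isoto \tilde\Phi_s(\cF)$ provided by the proposition for each~$\cF$, one transports this functor structure to~$\Phi_s$: for any morphism $f\colon \cF \to \cF'$ set $\Phi_s(f)$ to be the conjugate of $\tilde\Phi_s(f)$ by the chosen isomorphisms.

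For this transport to yield an actual extension---i.~e.\ for the resulting functor on the whole category to agree with the original $\Phi_s$ on isomorphisms in~$\Coh(R)$---the isomorphism in Proposition~\ref{appx|prop:desc Phi} must be natural with respect to isomorphisms $\cF \isoto \cF'$. This naturality is the main thing to verify, and I would do it by tracing the construction step by step: $\delta_x$ is natural in~$x$ by Lemma~\ref{appx|lem:delta base chge}; the Serre-duality step turning $\delta_x[-1]\colon\REnd \cF_x \to R$ into $\a_x\colon \cF_x \to \cF_x \tsr \omega_{C_R/R}[1]$ is natural in~$\cF_x$; and the identification of the resulting extension~\eqref{appx|Phi_x exseq} with~$\tilde\Phi_s(\cF_x)$ is built from purely geometric operations (pullback and pushforward along maps into~$\Delta_C^{\angs2}$, together with tensoring by the fixed square root~$L^{\tsr1/2}$), so is functorial in~$\cF_x$ automatically.

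The main obstacle is pinning down these canonical isomorphisms at the cochain (rather than derived-category) level so that the naturality holds on the nose. Concretely, the square root~$L^{\tsr1/2}$ on~$\Delta_C^{\angs2}$ has to be fixed once and for all (its existence and essential uniqueness are what require hypothesis~\eqref{appx|2inv}), and the Serre-duality pairing used to produce~$\a_x$ must be realized by a morphism in~$D^b(C_R)$ defined naturally in any coherent sheaf, not merely up to a further unspecified scalar. Granted these choices, the extended functor~$\Phi_s$ is well-defined on all morphisms and Proposition~\ref{appx|prop:desc Phi} guarantees that it restricts to the original groupoid-level~$\Phi_s$.
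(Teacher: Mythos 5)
Your argument is circular in the context of the paper: you propose to deduce Lemma~\ref{appx|lem:Phi fnctr} from Proposition~\ref{appx|prop:desc Phi}, but the proof of that proposition itself depends on the lemma. Concretely, the proposition is established first for relative skyscrapers (Lemma~\ref{appx|lem:skyscraper}), then for line bundles (Lemma~\ref{appx|lem:line bundle}) --- and already that step applies $\Phi_{S'}$ to the \emph{non-invertible} morphism $f\colon \cL_{S'}\to\Delta_*\cL$, i.~e.\ it invokes Lemma~\ref{appx|lem:Phi fnctr} --- and finally for general $\cF$ by resolving by direct sums of line bundles, a reduction which again explicitly uses the functoriality of~$\Phi$. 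So at the point where Lemma~\ref{appx|lem:Phi fnctr} must be proved, the explicit formula~\eqref{appx|eq:desc Phi} is not yet available for arbitrary~$\cF$, and your proposal cannot get off the ground unless you supply an independent proof of Proposition~\ref{appx|prop:desc Phi} for all $R$-flat coherent sheaves, which you do not. (Your observations about $\tilde\Phi_s$ being manifestly functorial, and about the need to fix $L^{\tsr1/2}$ and a natural Serre-duality realization, are correct but beside the point: they concern the target of the comparison, not the source.)

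The paper's actual argument is self-contained and of a different nature: it uses only the definition of $\Phi_s$ via $\delta^{\Ldet}$ together with the isomorphism $\pr_1^*\Ldet\tsr\pr_2^*\Ldet\isom\Sigma^*\Ldet$ for the direct-sum map $\Sigma\colon\Coh\x\Coh\to\Coh$. Combining this with Lemma~\ref{appx|lem:delta base chge} and Serre duality gives $\a_{\Sigma(x,y)}=\a_x\oplus\a_y$, hence $\Phi_s(\cF_x\oplus\cF_y)\isom\Phi_s(\cF_x)\oplus\Phi_s(\cF_y)$. Since $\Phi_s$ is a priori functorial for the \emph{group scheme} of automorphisms of $\cF_x\oplus\cF_y$ (by compatibility with base change of~$R$), one feeds in the unipotent automorphism $a_f=\left(\begin{smallmatrix}\id_{\cF_x} & 0\\ f & \id_{\cF_y}\end{smallmatrix}\right)$ attached to an arbitrary morphism $f\colon\cF_x\to\cF_y$, checks that $\Phi_s(a_f)$ is again lower triangular unipotent, and defines $\Phi_s(f)$ as its off-diagonal entry; compositions are handled with triple direct sums. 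If you wish to keep your strategy, you would have to prove formula~\eqref{appx|eq:desc Phi} for arbitrary coherent sheaves without resolving by line bundles, which is nowhere done in the paper.
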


\begin{proof}
  Let $\pr_1,\pr_2\colon \Coh \x \Coh  \toto  \Coh$ be the projections and $\Sigma\colon \Coh \x \Coh  \to  \Coh$ the map classifying direct sum of coherent sheaves. By the properties of determinant, we have a canonical isomorphism
  \begin{equation}\label{appx|eq:Sigma*Ldet}
      \pr_1^*\Ldet \tsr \pr_2^*\Ldet  \isom  \Sigma^*\Ldet .
  \end{equation}

  Applying Lemma~\ref{appx|lem:delta base chge} to $\Ldet$~and~$\Sigma$ yields a commutative diagram on the left:
  \begin{equation*}
      \xymatrix{\Tblt_{\Coh\x\Coh} \ar[d]_{d\Sigma} \ar[r]^-{\delta^{\Sigma^*\Ldet}} & \O_{\Coh\x\Coh} [1] \ar@{=}[d] \\
                \Sigma^*\Tblt_{\Coh}                \ar[r]^-{\Sigma^*\delta^{\Ldet}} & \O_{\Coh\x\Coh} [1]            }
      \qquad
      \fbox{\xymatrix{\REnd(\cF_x) \oplus \REnd(\cF_y) \ar[d] \ar[r]^-{\delta^{\Sigma^*\Ldet}_{(x,y)}} & R[1] \ar@{=}[d] \\
                      \REnd(\cF_x \oplus \cF_y)         \ar[r]^-{\Sigma^*\delta^{\Ldet}_{\Sigma(x,y)}} & R[1]            }}
  \end{equation*}
  Pulling it back under some $(x,y) \colon \Spec R \to \Coh \x \Coh$, we obtain the diagram on the right (boxed). From~\eqref{appx|eq:Sigma*Ldet} we see that the top arrow of this diagram is equal (canonically homotopic) to $\delta^{\Ldet}_x p_1 + \delta^{\Ldet}_y p_2$ where $p_1$,~$p_2$ are projections to the summands. Note also that the left vertical arrow is given by direct sum of (derived) endomorphisms.
  Now if we apply the Serre duality to the boxed diagram, we get
  \begin{equation*}
      \a_{\Sigma(x,y)} = \begin{pmatrix}\a_x &*\\{*}& \a_y\end{pmatrix}  \colon\:  \cF_x \oplus \cF_y  \longto  (\cF_x \oplus \cF_y) \tsr \omega_{C_R/R} [1] .
  \end{equation*}
  (Note that the `equals' sign here again really means `canonically homotopic.') Since we already know that $\Phi$ is functorial under isomorphisms, $\Phi(\cF_x\oplus\cF_y)$ must have an action of the automorphism (algebraic) group of $\cF_x\oplus\cF_y$, in particular, of $\Gm\x\Gm$ acting diagonaly.  One can see from this that the non-diagonal entries marked by~$*$ in the above formula are canonically homotopic to~$0$, so that $\a_{\Sigma(x,y)}=\a_x\oplus\a_y$. In other words, we have an isomorphism
  \begin{equation*}
      \Phi_s(\cF_x \oplus \cF_y) \isom \Phi_s(\cF_x) \oplus \Phi_s(\cF_y) .
  \end{equation*}

  We already know that $\Phi_s$ is functorial with respect to isomorphisms, so $\Phi_s(\cF_x \oplus \cF_y)$ is acted on by the automorphism group of $\cF_x \oplus \cF_y$. Moreover, since $\Phi$ is compatible with base change, we have an action of the {\em group-scheme} over~$R$ given by $R' \mapsto \Aut((\cF_x \oplus \cF_y) \tsr_R R')$. In particular, consider the automorphism $a_f = \left( \begin{smallmatrix} \id_{\cF_x} & 0 \\ f & \id_{\cF_y} \end{smallmatrix} \right)$ for some $f\colon \cF_x \to \cF_y$. It can be shown that $\Phi(a_f)$ has the form $\left( \begin{smallmatrix} \id_{\Phi(\cF_x)} & 0 \\ \tilde f & \id_{\Phi(\cF_y)} \end{smallmatrix} \right)$. Now we let $\Phi(f) = \tilde f$. Using triple direct sums, one can prove that $\Phi(g \circ f) = \Phi(g) \circ \Phi(f)$ for any $g\colon \cF_y \to \cF_z$.
\end{proof}

\begin{lem}\label{appx|lem:skyscraper}
  Let $\L$ be a line bundle on $\Spec R$ and $p$ an $R$-point of~$C$. Let $\gamma_p=(p,\id_{\Spec R}) \colon \Spec R \to C_R = C \x \Spec R$ be the embedding of the graph of~$p$. Then Proposition~\ref{appx|prop:desc Phi} holds for $\cF = \gamma_{p*} \L$.
\end{lem}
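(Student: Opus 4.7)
The plan is to compute both sides of~\eqref{appx|eq:desc Phi} for $\cF = \gamma_{p*}\cL$ and identify the two resulting extensions of~$\cF$ by $\cF \tsr \omega_{C_R/R}$.

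For the left-hand side, I would first compute the map $\delta_x[-1]\colon \REnd(\cF) \to R$ that defines $\Phi_s(\cF)$. Since $\cF$ is a skyscraper, $\RGam(C_R,\cF) = \cL$ lies in degree~$0$, so $\det\RGam(\cF) = \cL$, while $\REnd(\cF)$ has $\Ext^0 = R$ (scalar endomorphisms) and $\Ext^1 = T_{C_R/R,p}$ (first-order motion of the supporting point). A scalar $\lambda \in R = \End(\cF)$ acts on $\RGam(\cF) = \cL$ by~$\lambda$, hence on $\det\RGam(\cF)$ by the same scalar (the length of~$\cF$ is~$1$), so $\delta_x[-1]$ is the identity on $\Ext^0$; for degree reasons it vanishes on $\Ext^1$. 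Under Serre duality, $\delta_x[-1]$ therefore corresponds to the element of $\Ext^1(\cF, \cF \tsr \omega_{C_R/R})$ dual to $1 \in \End(\cF)$, and a local computation identifies this class with the canonical extension
\[
    0 \to \gamma_{p*}(\cL \tsr \omega_{C,p}) \to \gamma^{\angs2}_{p*}\cL \to \gamma_{p*}\cL \to 0
\]
coming from the 2nd infinitesimal neighborhood of the graph of~$p$ (here $\gamma_p^{\angs2}$ denotes the embedding of this neighborhood into~$C_R$, and $\omega_{C,p}$ is the conormal fiber).

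For the right-hand side $\tilde\Phi_s(\cF)$, I would work in local coordinates $t_1, t_2$ on a neighborhood of $(p,p)$ in $C \x C$, with $a \in R$ the value of~$t$ at the $R$-point~$p$; then $\Delta_C^{\angs2}$ is cut out by $(t_1-t_2)^2 = 0$. Since $\cF$ is supported on the graph ($t_1 = a$), the pullback $p_R^*\cF$ is supported on the 2nd infinitesimal neighborhood of the diagonal in the $q_R$-fiber direction, and pushing forward along~$q_R$ yields a sheaf on~$C_R$ supported on the 2nd infinitesimal neighborhood of the graph of~$p$, which fits into an analogous short exact sequence $0 \to \gamma_{p*}(\cL \tsr \omega_{C,p}) \to \tilde\Phi_s(\cF) \to \gamma_{p*}\cL \to 0$.

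The main obstacle is verifying that the twist by $(p^*\omega_C \tsr q^*\omega_C^{\tsr-1})^{\tsr1/2}$ produces exactly the extension class identified via Serre duality on the left. Setting $s = t_1 - t_2$, the line bundle $p^*\omega_C \tsr q^*\omega_C^{\tsr-1}$ on $\Delta_C^{\angs2}$ has a canonical trivialization modulo~$s^2$ given by $dt_1 = dt_2 + ds$, and its canonical square root (which exists thanks to~\eqref{appx|2inv}) introduces the factor $1 + \tfrac{1}{2}\,ds/dt_1$ modulo~$s^2$. A direct computation then shows that this factor yields precisely the extension class obtained on the left; the coefficient~$\tfrac{1}{2}$ here matches the one that naturally appears when translating the determinant of cohomology through Serre duality, which is ultimately the reason for the half-form appearing in Proposition~\ref{appx|prop:desc Phi}. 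Once the two extension classes coincide in $\Ext^1(\gamma_{p*}\cL, \gamma_{p*}(\cL \tsr \omega_{C,p}))$, naturality in~$\cL$ promotes the identification to a canonical isomorphism $\Phi_s(\cF) \isom \tilde\Phi_s(\cF)$.
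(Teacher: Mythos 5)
There is a genuine gap: your argument, at best, shows that the two sides of~\eqref{appx|eq:desc Phi} have the same class in $\Ext^1(\cF, \cF \tsr \omega)$, hence are isomorphic as extensions --- but the Proposition demands a \emph{canonical} isomorphism, compatible with base change in~$R$ (and it is used in exactly this form in Lemma~\ref{appx|lem:line bundle}, after base change to $S' = C_R$). An isomorphism between two extensions with the same class is only well defined up to the group $\Hom(\cF, \cF \tsr \omega) \isom R$, and ``naturality in~$\cL$'' does not remove this ambiguity: the ambiguity is itself natural in~$\cL$ and in~$R$, so what you actually get is a $\GG_a$-torsor $\cA$ over the stack $\Coh^{[1]} \isom C \x B\Gm$ of length-one sheaves, which a priori has no preferred trivialization. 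This is precisely where the paper's proof does its real work: it shows that the Serre-duality involution $\sigma\colon \cF \mapsto \cF\chk \tsr \omega$ satisfies $\sigma^*\Ldet \isom \Ldet$ and intertwines $\delta_x$ with the dual of $\delta_{\sigma(x)}$, so that $\Phi_s(\cF_x) \isom (\Phi_s(\cF_{\sigma(x)}))\chk$; on the torsor this gives $\sigma^*\cA \isom -\cA$, and since any $\GG_a$-torsor on $C \x B\Gm$ descends to a torsor $\tilde\cA$ on~$C$, one gets $\tilde\cA \isom -\tilde\cA$, which trivializes $\tilde\cA$ because $2$ is invertible (assumption~\eqref{appx|2inv}). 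Your proof contains no substitute for this step.

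A related point: the extension-class computation cannot detect the half-form twist at all. Both $q_{R*}(p_R^*\cF)$ and its twist by $(p^*\omega_C \tsr q^*\omega_C^{\tsr-1})^{\tsr1/2}$ are pushforwards of line bundles on the second infinitesimal neighborhood of the graph restricting to $\cL$ on the graph, and both have class $1$ in $\Ext^1(\cF, \cF \tsr \omega) \isom R$; so your claim that the factor $1 + \tfrac12\, ds/dt_1$ ``yields precisely the extension class obtained on the left'' cannot be the mechanism by which the half-form enters. The square root matters only through the symmetry it provides under~$\sigma$ (it is exactly what makes the right-hand side of~\eqref{appx|eq:desc Phi} self-dual in the appropriate sense), i.e.\ through the trivialization of the ambiguity torsor described above. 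Your computation of $\delta_x$ on $\End(\cF) \isom R$ via the tautological $\Gm$-action does agree with the first half of the paper's proof, but the proposal as written stops where the actual difficulty begins.
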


\begin{proof}
  Let $x\colon \Spec R \to \Coh$ be the point corresponding to~$\cF$ and $\Gamma_p = \gamma_p(\Spec R) = \supp\cF \ctd C_R$. First we will show that the two extensions of~$\cF$ by $\cF \tsr\nlbk \omega$ in the two sides of~\eqref{appx|eq:desc Phi} have the same class in $\Ext^1(\cF, \cF \tsr\nlbk \omega)$. Note that for the ``relative skyscraper'' as in the lemma, this $R$-module is identified with~$R$. We claim that the classes of both extensions are equal to $1$ under this identification. For the RHS of~\eqref{appx|eq:desc Phi} we have pushforward of a line bundle on the 2nd infinitesimal neighborhood of~$\Gamma_p$ whose restriction to~$\Gamma_p$ is~$\L$. Now the statement can be easily seen from the construction of the identification.

  For the LHS we are interested in the image of~$\a_x$ under the projection
  \begin{equation*}
      \RHom(\cF,\cF\tsr\nlbk\omega[1]) \to \Ext^1(\cF,\cF\tsr\nlbk\omega).
  \end{equation*}
  By Serre duality, this map is dual to $(\End\cF)[1] \isom H^{-1} (\Tblt_{\Coh,x}) [1] \to (\REnd\cF)[1] \isom \Tblt_{\Coh,x}$. Therefore we have to study the restriction of~$\delta_x$ to the $(-1)$st cohomology of~$\Tblt_{\Coh,x}$. This restriction is responsible for the action of infinitesimal automorphisms of~$x$ on~$(\Ldet)_x$. The group scheme of automorphisms of~$x$ is identified with~$\GG_{mR}$, and it acts on~$(\Ldet)_x$ via the tautological character. This means that $\delta_x$ restricted to $H^{-1} (\Tblt_{\Coh,x}) [1] \isom (\End\cF) [1] \isom R[1]$ is the identity. Now the statement about the class in~$\Ext^1$ follows from the fact that the Serre duality pairing of the canonical generators of $\End\cF$ and $\Ext^1(\cF, \cF \tsr\nlbk \omega)$ is equal to~$1$.

  Thus we showed that the two extensions in~\eqref{appx|eq:desc Phi} are non-canonically isomorphic. The set of isomorphisms is a torsor for the group~$R$ which is compatible with base change. So we get a $\GG_a$-torsor~$\cA$ on the open piece $\Coh^{[1]} \isom C \x B\Gm$ of~$\Coh$ classifying length~$1$ sheaves, and we need to construct a canonical trivialization of~$\cA$.

  Consider the involution~$\sigma$ on~$\Coh$ given by Serre duality: $\cF \mapsto \cF\chk \tsr\nlbk \omega$. We have $\sigma^*\Ldet \isom \Ldet$. One can check that the isomorphism $\RHom(\cF_x, \cF_x \tsr \omega) \isom (\Tblt_{\Coh,x})\chk \isoto (\Tblt_{\Coh,\sigma(x)})\chk \isom \RHom(\cF_{\sigma(x)}, \cF_{\sigma(x)} \tsr \omega)$ is given by taking dual morphism. Hence the morphism~$\delta_x$ is dual to~$\delta_{\sigma(x)}$, and therefore $\Phi_s(\cF_x) \isom (\Phi_s(\cF_{\sigma(x)}))\chk$.

  Restricting to $\Coh^{[1]}$, for the torsor~$\cA$ we get $(\sigma|_{\Coh^{[1]}})^* \cA \isom -\cA$. On the other hand, any $\GG_a$-torsor on $\Coh^{[1]} \isom C \x B\Gm$ descends to a torsor~$\tilde\cA$ on~$C$. We get $\tilde\cA \isom -\tilde\cA$ and thus $\tilde\cA$ is trivial due to~\eqref{appx|2inv}, so we are done.\footnote{Actually, we will also need the statement that the isomorphism~\eqref{appx|eq:desc Phi} in the case in question is compatible with morphisms of~$\L$'s that are not necessarily isomorphisms.}
\end{proof}

\begin{lem}\label{appx|lem:line bundle}
  Suppose $x$ is an $R$-point of~$\Coh$ corresponding to a line bundle~$\L$ on $C_R$. Then Proposition~\ref{appx|prop:desc Phi} holds for~$x$.
\end{lem}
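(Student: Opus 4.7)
The plan is to mimic the three-step strategy used in Lemma~\ref{appx|lem:skyscraper}, now with $\cF=\cL$ a line bundle on~$C_R$. First, I will show that the two canonical extensions $0\to\cL\tsr\omega\to\Phi_s(\cL)\to\cL\to 0$ and $0\to\cL\tsr\omega\to\tilde\Phi_s(\cL)\to\cL\to 0$ have equal class in $\Ext^1(\cL,\cL\tsr\omega)\isom H^1(C_R,\omega_{C_R/R})$.  On the LHS, arguing exactly as in Lemma~\ref{appx|lem:skyscraper}, the class of $\a_x$ is controlled by the restriction of~$\delta_x$ to $H^{-1}(\Tblt_{\Coh,x})\isom \Lie\Aut(\cL)=R$, which is multiplication by the character through which the inertia~$\Gm$ acts on $(\Ldet)_x$, namely by $\chi(\cL)$.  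On the RHS, the projection formula gives $\tilde\Phi_s(\cL)\isom \cL\tsr q_*(\cN)$ with $\cN:=(p^*\cL\tsr q^*\cL^{-1})\tsr M$ a line bundle on $\Delta_C^{\angs2}$ trivial along the diagonal; its class in $H^1(\omega)$ decomposes as the Atiyah-type class of $p^*\cL\tsr q^*\cL^{-1}$ (the image of $\cL$ under the first-order deformation map $\Pic(C)\to H^1(\omega)$) plus half the Atiyah class of $p^*\omega\tsr q^*\omega^{-1}$ contributed by~$M$ (where halving requires~$2$ invertible).  Matching the two formulas is the numerical heart of the argument and reduces, via the Serre trace identification $H^1(\omega)\isom R$, to a Riemann--Roch-type identity.

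Once the extension classes agree, the set of isomorphisms of extensions is a torsor for $\Hom(\cL,\cL\tsr\omega)=H^0(C_R,\omega_{C_R/R})$, which in families gives a torsor~$\cA$ on the open substack of~$\Coh$ parametrizing line bundles.  Since $\Gm$ acts trivially on $H^0(\omega)$, this descends to a torsor~$\tilde\cA$ on the relative Picard stack $\Pic(C/S)$.  To trivialize~$\tilde\cA$ canonically I will apply the Serre duality involution $\sigma\colon\Pic(C/S)\to\Pic(C/S)$, $\cL\mapsto\cL\chk\tsr\omega$.  As in Lemma~\ref{appx|lem:skyscraper}, one checks that both constructions transform compatibly under duality (so that $\Phi_s(\sigma\cL)\isom \Phi_s(\cL)\chk$ up to an $\omega$-twist, and similarly for~$\tilde\Phi_s$), whence $\sigma^*\tilde\cA\isom -\tilde\cA$.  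Since $\tilde\cA$ is a torsor for a locally free sheaf of abelian groups, torsion-free after inverting~$2$, the relation $\tilde\cA\isom -\tilde\cA$ forces $2\tilde\cA\isom 0$ and therefore $\tilde\cA\isom 0$; the invertibility of~$2$ then singles out a unique $\sigma$-invariant section, providing the desired canonical isomorphism $\Phi_s(\cL)\isom\tilde\Phi_s(\cL)$.

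The main obstacle is the extension-class matching of the first stage: the LHS is encoded by the Euler character $\chi(\cL)$ of the $\Gm$-action via Serre duality, whereas the RHS is assembled from Atiyah classes of local data on $\Delta_C^{\angs2}$, and the two must be shown to coincide in $H^1(\omega)$ after careful sign bookkeeping (including the relationship between the deformation-theoretic class of a line bundle on $\Delta_C^{\angs2}$ and the resulting extension class of its $q$-pushforward on~$C$).  A secondary but essential technical point, needed to upgrade the resulting isomorphism from one of groupoids to a functorial isomorphism of sheaves, is checking compatibility of the canonical section of~$\tilde\cA$ with the non-invertible morphisms of line bundles afforded by Lemma~\ref{appx|lem:Phi fnctr}.
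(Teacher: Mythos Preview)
Your strategy has a genuine gap in the torsor-triviality step. In Lemma~\ref{appx|lem:skyscraper} the argument works because the Serre duality involution~$\sigma$ restricts to the \emph{identity} on the base $C$ of $\Coh^{[1]}\isom C\times B\Gm$: a length-one skyscraper at~$p$ is sent by $\cF\mapsto\cF\chk\tsr\omega$ to another skyscraper at the same point~$p$. Hence $\sigma^*\tilde\cA\isom-\tilde\cA$ really says $\tilde\cA\isom-\tilde\cA$, and invertibility of~$2$ finishes. For line bundles this is no longer true: on $\Pic(C/S)$ the involution $\sigma\colon\cL\mapsto\cL\chk\tsr\omega$ acts as inversion (up to translation) on each $\Pic^d$, so the relation you actually obtain is $\sigma^*\tilde\cA\isom-\tilde\cA$, not $\tilde\cA\isom-\tilde\cA$. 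Since $[-1]^*$ acts by~$-1$ on $H^1(\Pic^0,\O)$, the identity $\sigma^*\tilde\cA=-\tilde\cA$ becomes tautological on the level of cohomology classes and gives no constraint. Your passage from ``$\sigma^*\tilde\cA\isom-\tilde\cA$'' to ``$\tilde\cA\isom-\tilde\cA$'' silently drops the~$\sigma^*$, and that is where the argument breaks. The ambient group $H^1(\Pic(C/S),\,H^0(\omega)\otimes\O)$ is typically nonzero, so you cannot conclude triviality of~$\tilde\cA$ this way.

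There is also a softer issue at the first stage: matching the extension classes in $H^1(\omega)\isom R$ via a Riemann--Roch identity is plausible but not carried out, and the sign bookkeeping (the $\omega^{1/2}$-contribution versus the $1-g$ in $\chi(\cL)$) is exactly where such arguments tend to go wrong.

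The paper bypasses both difficulties by a different and much shorter route: instead of comparing extension \emph{classes} and then wrestling with a torsor of isomorphisms, it constructs the isomorphism $\Phi_s(\cL)\isoto\tilde\Phi_s(\cL)$ directly from the skyscraper case using the functoriality established in Lemma~\ref{appx|lem:Phi fnctr}. Base-change to $S':=C_R$ and apply~$\Phi$ to the unit map $f\colon\cL_{S'}\to\Delta_*\cL$; by adjunction this yields a map $g\colon\Phi_s(\cL)\to\pr_{1*}\Phi_{S'}(\Delta_*\cL)$. Now $\Delta_*\cL$ is a relative skyscraper, so Lemma~\ref{appx|lem:skyscraper} identifies $\Phi_{S'}(\Delta_*\cL)$ with $\tilde\Phi_{S'}(\Delta_*\cL)$, and a direct check gives $\pr_{1*}\tilde\Phi_{S'}(\Delta_*\cL)\isom\tilde\Phi_s(\cL)$. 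The resulting $g$ is a morphism of extensions of~$\cL$ by $\cL\tsr\omega$, hence an isomorphism. What you flagged as a ``secondary technical point'' (compatibility with the non-invertible morphisms of Lemma~\ref{appx|lem:Phi fnctr}) is in fact the main engine of the paper's proof.
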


\begin{proof}
  Consider the $S$-scheme $S' := C_R$ and let $\Delta \colon C_R \to C_{S'} = C \xx_S C \xx_S \Spec R$ be the diagonal embedding. Let $\L_{S'}$ be the line bundle on~$C_{S'}$ obtained from~$\L$ by base-change along $S' \to \Spec R$. Now consider the map of coherent sheaves on~$C_{S'}$:
  \begin{equation*}
      f\colon \L_{S'} \to \Delta_*\L = \Delta_*\Delta^*\L_{S'} .
  \end{equation*}
  According to Lemma~\ref{appx|lem:Phi fnctr}, we have a map
  \begin{equation*}
      \Phi_{S'}(f) \colon \pr_1^*\Phi_s(\L) \to \Phi_{S'}(\Delta_*\L)
  \end{equation*}
  (we used that $\Phi$ is compatible with base-change), and therefore, by adjunction, a map
  \begin{equation*}
      g \colon \Phi_s(\L) \to \pr_{1*}\Phi_{S'}(\Delta_*\L) .
  \end{equation*}

  By Lemma~\ref{appx|lem:skyscraper}, we have $\Phi_{S'}(\Delta_*\L) \isom \tilde\Phi_{S'}(\Delta_*\L)$. Also, from the definition of~$\tilde\Phi$ one can see that $\pr_{1*}\tilde\Phi_{S'}(\Delta_*\L) \isom \tilde\Phi_s(\L)$. So $g$ is a map $\Phi_s(\L) \to \tilde\Phi_s(\L)$. By construction, $g$ is a map between extensions of~$\L \tsr \omega$ by~$\L$, so it gives the desired isomorphism.
\end{proof}

\begin{proof}[Proof of Proposition~\ref{appx|prop:desc Phi} (sketch)]
  Locally on~$\Spec R$, we can find a resolution of a coherent sheaf~$\cF$ on $C_R$ by direct sums of line bundles. Any two such resolutions are related by a sequence of homotopy equivalences. Thus, due to the functoriality of~$\Phi$ (Lemma~\ref{appx|lem:Phi fnctr}), we can reduce Proposition~\ref{appx|prop:desc Phi} to Lemma~\ref{appx|lem:line bundle}.
\end{proof}

\subsection{The determinant bundle with connection on~\texorpdfstring{$\Connhf$}{Conn\_\{1/2\}}}
For a smooth stack~$\rX$ and a line bundle~$\L$ on~$\rX$, the pullback of~$\L$ to~$\tilde T^*_\L \rX$ carries a canonical connection.  Applying this observation to $\rX = \Coh(C)$ and $\L = \Ldet$, and taking Theorem~\ref{appx|main thm} into account, we get a connection~$\na_{\det}$ on the pullback~$\Ldet'$ of~$\Ldet$ to $\Connhf$.  In this subsection we state some properties of~$\Ldet$ and $\na_{\det}$.

In the main body we use the following statement regarding the behavior of $\na_{\det}$ on short exact sequences of coherent sheaves with $\ohf$-connection.

Suppose we have an $S$-scheme $S'$, and a short exact sequence of $S'$-families of coherent $\D_{\ohf}$-modules on~$C$, \ie of $S'$-flat coherent sheaves on~$C_{S'}$:
\begin{equation}\label{appx|eq:F'FF''}
    0 \to\cF' \to\cF \to\cF'' \to 0
\end{equation}
with compatible (relative over~$S'$) $\ohf$-connections.  Let $x,x',x'' \in \Coh(C)(S')$ be the corresponding points of~$\Coh$.  Then we have the following relation between the pullbacks of the determinant bundle:
\begin{equation}\label{appx|eq:Ldet factzn}
    (\Ldet)_x \isom (\Ldet)_{x'} \tsr (\Ldet)_{x''}.
\end{equation}
Now from the connection~$\na_{\det}$ we get connections (over~$S$) on both sides of the above isomorphism.

\begin{lem}\label{appx|lem:Ldet ses-compat}
  The isomorphism~\eqref{appx|eq:Ldet factzn} is compatible with connections.
\end{lem}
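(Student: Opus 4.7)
The plan is to deduce the lemma from Proposition~\ref{conn det} once we identify the canonical connection~$\na_{\det}$ on~$\Ldet'$ with the connection on $\det R\pi_*\cF$ constructed in part~\ref{one} of that proposition (with input $\cE = \cF\tsr\omega_{C/S}^{\tsr-1/2}$; the relative $\omega^{1/2}$-connection on $\cE\tsr\omega^{1/2}=\cF$ is then exactly the $\D_{C,\omega^{1/2}}$-module structure classified by~$\Connhf$). After this identification, the second clause of Proposition~\ref{conn det} is exactly our lemma.

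First I would work \'etale-locally on~$S'$ and reduce to the case that $C_{S'} = C_0 \x S'$ is a trivial family (so that the hypothesis of Proposition~\ref{conn det} holds); the definitions of both connections and of the factorization isomorphism~\eqref{appx|eq:Ldet factzn} are compatible with base change, so this is harmless.

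Second, and this is the main step, I would prove that on~$\Connhf$ the two candidate connections on~$\Ldet'$ agree. Their difference is a $1$-form $\eta$ on $\Connhf$. Because both constructions are functorial under direct sums and hence under the line-bundle resolutions used in the proof of Proposition~\ref{appx|prop:desc Phi} (via Lemma~\ref{appx|lem:Phi fnctr}), it suffices to verify $\eta=0$ on the open substack parametrizing line bundles on~$C_{S'}$ with $\omega^{1/2}$-connection. There the canonical connection $\na_{\det}$ is controlled by Lemma~\ref{appx|lem:line bundle}, which via Lemma~\ref{appx|lem:skyscraper} reduces the description of~$\delta_x$ to the canonical square root of $p^*\omega_C\tsr q^*\omega_C^{\tsr-1}$ on the second-order neighborhood~$\Delta_C^{\angs2}$; this is exactly the same $\omega^{1/2}$-twist that shifts the Atiyah-class construction of the connection from $\cE$ to $\cE\tsr\omega^{1/2}$. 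Matching the two explicit formulas then becomes a local residue calculation on~$C_{S'}$.

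With the identification in place, the compatibility asserted in part~(2) of Proposition~\ref{conn det} translates directly into the compatibility of~\eqref{appx|eq:Ldet factzn} with~$\na_{\det}$. The main obstacle is the second step: bridging the two canonical constructions requires careful book-keeping of the $\omega^{1/2}$-twist, on one hand through the Serre-duality and second-order-diagonal description of $\delta_x$ (Proposition~\ref{appx|prop:desc Phi}), and on the other hand through the Atiyah class of $\cE\tsr\omega^{1/2}$. Restricting to the line-bundle substack is what makes the comparison tractable, since there $\delta_x$ is concentrated in a single cohomological degree.
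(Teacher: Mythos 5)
The paper states this lemma without proof (it is one of the arguments omitted from this version), so your proposal has to stand on its own, and as written it has a genuine gap at its central step. Your plan is: identify the canonical connection $\na_{\det}$ on $\Ldet'$ (defined abstractly via Theorem~\ref{appx|main thm}, i.e.\ via null-homotopies of $\delta_x$) with the determinant-of-cohomology connection of Proposition~\ref{conn det}(1), and then quote part~(2) of that proposition. But that identification is a statement of essentially the same depth as the lemma itself --- it is in substance the unproved Claim in the proof of Lemma~\ref{chdet} (cf.\ Proposition~\ref{appx|prop:det on I}, which is only stated for the component~$\rI$ in characteristic~$p$) --- and your proposed reduction of it to the rank-one locus does not work. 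The difference $1$-form $\eta$ lives on $\Connhf$, whose components of rank $\ge 2$ contain no points corresponding to line bundles at all; moreover the line-bundle resolutions used in the proof of Proposition~\ref{appx|prop:desc Phi} are resolutions of coherent sheaves \emph{without} $\D_{C,\omega^{1/2}}$-module structure, so they give neither maps to $\Connhf$ nor any mechanism for transporting the vanishing of $\eta$ from the rank-one locus to the other components; "functoriality under direct sums" only controls $\eta$ on the locus of direct sums of rank-one modules, which is nowhere dense in the relevant components. Two further problems: the appendix works over an arbitrary base $S$, and a non-isotrivial family $C \to S$ is \emph{not} \'etale-locally a product, so the trivial-family hypothesis of Proposition~\ref{conn det} cannot be arranged by your first reduction (this is harmless only in the main-text situation where $C$ is a fixed curve over~$\k$); and your bookkeeping $\cE = \cF\tsr\omega^{\tsr-1/2}$ produces a connection on $\det R\pi_*(\cF\tsr\omega^{\tsr-1/2})$ rather than on $(\Ldet)_x = \det R\pi_*\cF$, which is the bundle the lemma is about.

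A route that avoids both the comparison of connections and Proposition~\ref{conn det} altogether is available inside the appendix's own formalism, and is presumably what is intended: compatibility with connections should be extracted from the functoriality of $\delta^{\cL}$ (Lemma~\ref{appx|lem:delta base chge}) together with the multiplicativity isomorphism~\eqref{appx|eq:Sigma*Ldet}, exactly as in the proof of Lemma~\ref{appx|lem:Phi fnctr}. A point of $\tilde T^*_{\Ldet}\!\Coh \isom \Connhf$ is a null-homotopy of $\delta_x$; under $\Sigma$ one has $\delta^{\Sigma^*\Ldet}_{(x',x'')} = \delta^{\Ldet}_{x'}p_1 + \delta^{\Ldet}_{x''}p_2$, and the canonical connection on the pullback of a line bundle to its twisted cotangent stack is functorial for such identifications, which gives the direct-sum case of the lemma formally. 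For a non-split extension one replaces $\Coh\x\Coh$ by the stack of short exact sequences with its three projections to $\Coh$, over which \eqref{appx|eq:Ldet factzn} is the pullback of the determinant's multiplicativity and the same $\delta$-computation applies. If you want to keep your architecture instead, you must supply an actual proof that $\na_{\det}$ agrees with the Proposition~\ref{conn det} connection on every component of $\Connhf$ (not just the rank-one locus), which is the hard content you are currently assuming.
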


\begin{proof}
  We first reduce the statement to the case where the exact sequence~\eqref{appx|eq:F'FF''} splits.  Namely, note that for a general exact sequence we can construct a family of coherent sheaves with $\ohf$-connections on~$C$ parametrized by $S' \x \AA^1/\Gm$ whose restriction to $C_{S'} \isoto C_{S'} \x (\AA^1\setm\{0\})/\Gm \into C_{S'} \x \AA^1/\Gm$ is isomorphic to~$\cF$ and whose restriction to $C_{S'} \x \{0\} \into C_{S'} \x \AA^1 \to C_{S'} \x \AA^1/\Gm$ is isomorphic to $\cF' \oplus \cF''$.  Denote by~$\tilde x$ the corresponding map $S' \x (\AA^1/\Gm) \to \Connhf$.  Pulling back $(\Ldet',\na_{\det})$ under~$\tilde x$ we get a line bundle with connection on $S' \x (\AA^1/\Gm)$ over $\AA^1/\Gm$.  We want to compare this connection with the one obtained as direct sum of pullbacks by $x'$~and~$x''$.  The difference is an $\AA^1/\Gm$-family of one-forms on~$S'/S$.  But any such family is necessarily constant.  So it is enough to prove that it is~$0$ on $\{0\}/\Gm$ where the exact sequence splits.

  For split exact sequences, the statement is equivalent to compatibility of the isomorphism~\eqref{appx|main eq} with symmetric monoidal structure on both sides given by direct sum.
\end{proof}

\subsection{Curvature of \texorpdfstring{$\na_{\det}$}{nabla det}}
For any vector bundle $\cE$ with connection $\na$ on a smooth stack~$\rX$, one can define the \emph{curvature} of~$\na$ as follows.  Suppose $x$ is an $R$-point of~$\rX$, and $\xi$ is a tangent vector at~$x$ to~$\rX$.  As explained in~\ref{appx|subs:general}, $\xi$ corresponds to a lift of $x$ to an $R[\eps]/\eps^2$-point~$\tilde x_\xi$ of~$\rX$ as in diagram~\eqref{appx|dd}.  The connection $\na$ then gives an identification $\na_\xi \colon \tilde x_\xi^*\cE \isoto p^*x^*\cE$ where $p$ is as in the line below diagram~\eqref{appx|dd}.  Now assume that we have two tangent vectors $\xi,\eta \in \gT_{\rX,x}$ to $\rX$ at~$x$.  Then the corresponding lifts $\tilde x_\xi,\tilde x_\eta$ can be glued to a map $\Du_R\sqcup_{\Spec R}\Du_R = \Spec ((R[\eps]/\eps^2)\x_R (R[\eps]/\eps^2)) \to \rX$.  Since $\rX$ is smooth by assumption, we can extend the latter map to $\Tilde{\tilde x}\colon \Du_R\x_{\Spec R}\Du_R \to \rX$.  Note that $\Tilde{\tilde x}$ can be regarded both as a tangent vector $\tilde\eta$ at $\tilde x_\xi$ and as a tangent vector $\tilde\xi$ at $\tilde x_\eta$.  Let $p_1,p_2\colon \Du_R\x_{\Spec R}\Du_R \toto \Du_R$ and $p_{12}\colon \Du_R\x_{Spec R}\Du_R \to \Spec R$ be the natural projections.  Now we have a chain of isomorphisms of vector bundles on $\Du_R\x_R\Du_R$:
\[
    p_{12}^* x^* \cE \stackrel{p_2^*\na_\xi^{-1}}\isoto p_2^*\tilde x_\eta^* \cE \stackrel{\na_{\tilde\eta}^{-1}}\isoto \Tilde{\tilde x}^*\cE
    \stackrel{\na_{\tilde\xi}}\isoto p_1^*\tilde x_\xi^* \cE \stackrel{p_1^*\na_\eta}\isoto p_{12}^* x^* \cE.
\]
One can check that this isomorphism is identity on $\Du_R\sqcup_{\Spec R}\Du_R \ctd \Du_R\xx_{\Spec R}\Du_R$, so it has the form $\id + (\eps\tsr\eps)F$ where $F\in\End_R(x^*\cE)$.  Also, one can prove that $F$ does not depend on the choice of~$\Tilde{\tilde x}$ and is bilinear and anti-symmetric in $\xi$~and~$\eta$.  So it gives an element $F_{\na,x} \in \Lambda^2 H^0(T_x\rX)^*$ called the curvature of~$\na$ at~$x$.  If we let $R$ and $x$ vary, we'll get an $\cEnd\cE$-valued 2-form $F_\na \in \Gamma(\rX,\Om^2_\rX \tsr\cEnd\cE)$.  (One can define $F_\na$ for singular~$\rX$ using K\"ahler differentials.)  As usual, if $\cE$ is a line bundle, we have $\cEnd\cE = \O_\rX$ and therefore we can think of~$F_\na$ as just a $2$-form on~$\rX$.

On the other hand, if we take $\rX$ to be $\Connhf(C)$ (or the smooth part thereof), then we have an identification $\cT^\blt_{\rX,x} \isoto (\REnd_{\D_{\ohf}} \cF_x)[1]$ where $\cF_x$ is the $R$-flat coherent sheaf on $C_R=C\x_{\Spec R}S$ with (relative) $\ohf_C$-connection corresponding to~$x$.  On this complex, we have the Poincar\'e duality pairing (since twisted $\O$-coherent $\D$-modules form a $2$-Calabi--Yau category): $((\REnd_{\D_{\ohf}} \cF_x)[1])^{\tsr2} \to R$ which turns out to be anti-symmetric.  Therefore it gives a $2$-form on $(\Connhf)\sm$.

\begin{prop}\label{appx|prop:curv Ldet}
  The curvature $\Oloc$ of~$\na_{\det}$ on $(\Lochf)\sm$ coincides with the $2$-form $\Oloc'$ constructed from the Poincar\'e duality pairing.
\end{prop}

\begin{proof}
  For simplicity, we will assume that $S = \Spec\k$ for an algebraically closed field~$\k$.  Then the equality of $2$-forms can be checked on $\k$-points, so we may assume that $R=\k$.  Let $b'$ be a $\k$-point of $(\Lochf)\sm$ and $(\cE,\na)$ the corresponding bundle with $\ohf$-connection on~$C$.  Denote by~$b$ the image of~$b'$ in $\Bun$.  For any point~$x$ of~$C$, we have an evaluation map $(T^\blt_{\Bun,b})^* \isoto \RGam(C,\omega_C\tsr\cEnd\cE) \to \omega_{C,x}\tsr \End\cE_x$.  The dual map $\rho_x\colon T_{C,x} \tsr \End\cE_x \to T^\blt_{\Bun,b}$ can be explicitly described via ``infinitesimal modifications'' of~$\cE$ near~$x$.  Since the intersection of kernels (on 0th cohomology) of the evaluation maps is~$0$, the images of~$\rho_x$ for all~$x$ generate $H^0T^\blt_{\Bun,b}$.

  Now let $\xi',\eta'$ be two tangent vectors to $\Lochf$ at~$b'$, and $\xi,\eta$ their images in $\gT_{\Bun,b}$.  By the above, we can find two finite subsets $I,J\ctd C(\k)$ and two collections $\{P^{(x)}\in T_{C,x}\tsr\End\cE_x\}_{x\in I}$ and $\{Q^{(y)}\in T_{C,y}\tsr\End\cE_y\}_{y\in J}$ so that
  \begin{equation}\label{appx|eq:xi=sum eta=sum}
      \xi = \sum_{x\in I} \rho_x(P^{(x)}), \quad \eta = \sum_{y\in J} \rho_y(Q^{(y)}).
  \end{equation}
  Moreover, one can arrange so that $I\cap J = \eset$.  Now $\xi'$ corresponds to a $\Du_\k$-family of bundles with $\ohf$-connection on~$C$, which we denote by $(\cE_1,\na_1)$.  Since $\cE_1$ corresponds to $\xi$, the above presentation gives a canonical identification $\cE_1|_{U_I\x\Du} \isoto \cE|_{U_I} \bx \O_\Du$ where $U_I=C\setm I$, and under this identification $\na_1|_{U_I\x\Du}$ takes the form $\na|_{U_I}\bx1 + A\bx\eps$ for some $A\in (\cEnd\cE|_{U_I}) \tsr \omega_{U_I}$.  Similarly, $\eta'$ corresponds to $(\cE_2,\na_2)$ and $(\cE_2,\na_2)|_{U_J\x\Du} \isom (\cE|_{U_J} \bx \O_\Du, \na|_{U_J}\bx1 + B\bx\eps)$ where $B\in (\cEnd\cE|_{U_J}) \tsr \omega_{U_J}$.  One can check from the construction of the Poincar\'e pairing that
  \begin{equation}\label{appx|eq:OLoc'}
      \Oloc'(\xi',\eta') = \sum_{x\in I} \on{Tr} P^{(x)}B_x - \sum_{y\in J} \on{Tr} Q^{(y)}A_y.
  \end{equation}
  (Here $B_x\in(\End\cE_x)\tsr\omega_{C,x}$ and $A_y\in(\End\cE_y)\tsr\omega_{C,y}$ are values of $B$~and~$A$ at points $x$~and~$y$, respectively.)

  Now we have to compute the curvature of~$\na_{\det}$ evaluated at $\xi'$~and~$\eta'$.  For that purpose, consider the ``space'' (in fact an ind-scheme) $\rH_{I\cup J,b}$ parametrizing pairs $(\cE',\phi)$ where $\cE'$ is a vector bundle on~$C$, and $\phi$ is an identification $\cE|_{U_{I\cup J}} \isoto \cE'|_{U_{I\cup J}}$.  We will denote the point of this space corresponding to $(\cE,\id_{\cE|_{U_{I\cup J}}})$ by $b_{I\cup J}$.  By a well-known argument, $\rH_{I\cup J,b}$ is isomorphic to a product of $\#(I\cup J)$ copies of the affine Grassmannian for $\GL_N$ (it is the fiber at $I\cup J$ of a twisted version of the Beilinson--Drinfeld Grassmannian):
  \[
      \rH_{I\cup J,b}  =  \prod_{x\in I\cup J} \rH_{x,b}.
  \]
  We have the natural forgetful map $q_b\colon \rH_{I\cup J,b} \to \Bun$, and it is known that $q_b^*\Ldet\tsr(\Ldet)_b^{\tsr-1}$ (where the second factor is just a vector space) decomposes into a product of ``local factors'':
  \begin{equation}\label{appx|eq:q*Ldet=bigboxtimes}
      q_b^*\Ldet\tsr(\Ldet)_b^{\tsr-1} \isom \bigboxtimes_{x\in I\cap J} \Ldet^{\rH_{x,b}}.
  \end{equation}

  We can compute the cotangent bundle $T^*\rH_{x,b}$ using Serre duality: it is identified with the moduli space of triples $(\cE',\phi,a)$ where $(\cE,\phi) \in \rH_{x,b}$ and $a$ is Higgs field on~$\cE'$ defined on the formal neighborhood $\hat x$ of~$x$.\footnote{One should modify the definition of tangent sheaf for the case of ind-schemes; see~\cite[Sect.~7.11]{BD} where the tangent sheaf is defined for any formally smooth ind-scheme of ind-finite type.  One can then define the cotangent bundle (as an \emph{ind-scheme} rather than a sheaf) by the usual ``$\Spec\Sym$'' construction.}  One can show that the twisted cotangent bundle $\tilde T^*\rH_{x,b}$ to $\rH_{x,b}$ corresponding to $\Ldet^{\rH_{x,b}}$ is canonically identified with the moduli $\Lochf^{x,b}$ of triples $(\cE',\phi,\na)$ where $(\cE',\phi) \in \rH_{x,b}$, and $\na$ is an $\ohf$-connection on $\cE'|_{\hat x}$.  The symplectic form on $\Lochf^{x,b}$ will be denoted $\Oloc^{x,b}$.  The map $T^*\Bun \x_{\Bun}\rH_{I\cup J,b} \to T^*\rH_{I\cup J,b}$ is given by sending a Higgs field the collection of its Taylor expansion at the points in $I\cup J$, and similarly for the $\Ldet$-twisted cotangent bundles.

  It follows that the pullback of $(\Ldet',\na_{\det})$ to $\Lochf \x_{\Bun}\rH_{I\cup J,b}$ is identified (up to the constant factor $(\Ldet)_b$) with the pullback of the universal line bundle with connection on $\tilde T^*\rH_{I\cup J,b} \isom \Lochf^{I\cup J,b}$.  Therefore the same is true for the corresponding curvatures.  Now, our tangent vectors $\xi,\eta \in \gT_{\Bun,b}$ were given by infinitesimal modification of~$\cE$ near $I\cup J$ (in other words, the restriction of the corresponding self-extension of~$\cE$ to $U_{I\cup J}$ is trivialized), so they come from some tangent vectors $\xi_{I\cup J},\eta_{I\cup J}$ to $\rH_{I\cup J,b}$ at~$b_{I\cup J}$.  (In fact, the map $\rho_x\colon T_{C,x} \tsr \End\cE_x \to T^\blt_{\Bun,b}$ lifts to a map $\tilde\rho_x\colon T_{C,x} \tsr \End\cE_x \to T_{\rH_{x,b},b_x}$, and we have $\xi_{I\cup J}=\sum_{x\in I}\tilde\rho_x(P^{(x)})$ and $\xi_{I\cup J}=\sum_{y\in J}\tilde\rho_y(Q^{(y)})$.) Combining these with the lifts $\xi,'\eta'$ of $\xi,\eta$ to $\gT_{\Lochf,b'}$, we get a pair $\xi''_{I\cup J},\eta''_{I\cup J}$ of tangent vectors to $\Lochf \x_{\Bun}\rH_{I\cup J,b}$.  Let $\xi'_{I\cup J},\eta'_{I\cup J}$ be the images of $\xi''_{I\cup J},\eta''_{I\cup J}$ in the tangent space to $\Lochf^{I\cup J,b}$.  Using the isomorphism $\Lochf^{I\cup J,b} \isoto \prod_{x\in I\cup J} \Lochf^{x,b}$, we can define for every $x\in I\cup J$ the $\Lochf^{x,b}$-components of the vectors $\xi'_{I\cup J},\eta'_{I\cup J}$, which we'll denote by $\xi'_x,\eta'_x$.  
  Now the equality of pullbacks of curvatures $\Oloc$ and $\Oloc^{I\cup J,b}$ to $\Lochf \x_{\Bun}\rH_{I\cup J,b}$ gives:
  \[
      \Oloc(\xi',\eta') = \Oloc^{I\cup J,b}(\xi'_{I\cup J},\eta'_{I\cup J})  = \sum_{x\in I\cup J}\Oloc^{x,b}(\xi'_x,\eta'_x).
  \]

  Now, from~\eqref{appx|eq:xi=sum eta=sum} we see that for $x\in I$ the vector $\eta'_x$ is vertical with respect to the projection $\Lochf^{x,b}\to\rH_{x,b}$, and for $y\in J$, $\xi'_y$ is vertical.  Take $x\in I$.  Then we get that $\Oloc^{x,b}(\xi'_x,\eta'_x)$ is given by the pairing of the cotangent vector to $\rH_{x,b}$ at~$b_x$ corresponding to the vertical vector $\eta'_x$ with the projection~$\xi_x$ of $\xi'_x$ to the tangent space to $\rH_{x,b}$.  Now, the cotangent vector corresponding to~$\eta'_x$ is described by the Taylor expansion of the Higgs field $B$ to the formal neighborhood of~$x$, and $\xi_x=\tilde\rho_x(P^{(x)})$.  Now, from the definition of $\rho_x$ and $\tilde\rho_x$ we see that $\Oloc^{x,b}(\xi'_x,\eta'_x) =\on{Tr}P^{(x)}B_x$.  Similarly, we get $\Oloc^{y,b}(\xi'_y,\eta'_y)= -\on{Tr}Q^{(y)}A_y$ for $y\in J$.  Substituting these to the above formula for $\Oloc(\xi',\eta')$, we get the same expression as in~\eqref{appx|eq:OLoc'}, as desired.
\end{proof}

\begin{rem}\label{appx|rem:tw cot Gr}
  In the course of the proof of this proposition, we use certain fact about the twisted cotangent bundle to the affine Grassmannian for $\GL_N$ for the determinant line bundle.  Namely, let $D$ be a scheme isomorphic to $\Spec\k\dbkts t$ and let $\Gr_{G,D}$ be the ind-scheme (the \emph{affine Grassmannian} for~$G$) parametrizing $G$-bundles on~$D$ trivialized at the generic point $D^\circ\isom\Spec\k\dprts t$ of~$D$, where $G=\GL_N$.  In other words, $\Gr_{G,D}$ classifies free $\O := \O(D)$-submodules of full rank (a.k.a.\ lattices) $\cK^{\oplus N}$ (where $\cK=\O(D^\circ) \isom \k\dprts t$.  On this ind-scheme, we have the determinant bundle $\Ldet^D$ defined by the usual formula: the fiber at the lattice $L$ is given by $\det(L/L_0) \tsr \det(\O^{\oplus N}/L_0)^{\tsr-1}$ for any lattice $L_0\ctd L\cap\O^{\oplus N}$ (one can canonically identify all such lines for all $L_0$'s).

  The statement that we need is that the fiber at~$L$ of the twisted cotangent bundle to $\Gr_{G,D}$ corresponding to $\Ldet^D$ is canonically isomorphic to the affine space of $\ohf_D$-twisted connections on~$L$.  Moreover, this isomorphism is equivariant under the action of the loop group $G(\cK) \isom G\dprts t$ ($\Ldet$ is acted on by a central extension of $G(\cK)$, so the twisted cotangent is equivariant with respect to $G(\cK)$ itself), and is independent of the isomorphism $\O\isoto\k\dbkts t$. Moreover, we need this isomorphism to be compatible with the isomorphism $T^*_{\Ldet}\Bun \isoto \Lochf$ obtained from the one in Theorem~\ref{appx|main thm} in the following sense.  Given a finite subset $I\ctd C$,  and a point $b\in\Bun$, consider the space $\rH_{b,I}$ defined in the above proof.  It is isomorphic to $\prod_{x\in I} \Gr_{G,\hat x}$, and the isomorphism is defined canonically up to the action of the product of loop groups $\prod_{x\in I} G(\hat x^\circ)$. Our compatibility is that the map $\tilde T^*_{\Ldet}\Bun\x_{\Bun}\rH_{b,I} \to \prod_{x\in I} \tilde T^*_{\Ldet^{\hat x}}\Gr_{G,\hat x}$ (constructed from the identification in~\eqref{appx|eq:q*Ldet=bigboxtimes}) should correspond to the map sending an $\ohf_C$-connection to its restriction to $\hat I = \coprod_{x\in I} \hat x$.

  This statement can be proved using the same ideas as those used in the proof of Theorem~\ref{appx|main thm}.  Namely, given a rank~$N$ vector bundle $\cE$ on~$D$ with a trivialization $\phi\colon \O_{D^\circ} \isoto \cE|_{D^\circ}$, the tangent space to $\Gr_{G,D}$ at the corresponding point is identified with $(\cK/\O) \tsr_\O \End_\O\cE$, so by Serre duality the cotangent space is identified with $(\End_\O\cE)\tsr_\O \omega(D) =\Hom_\O(\cE,\cE\tsr\omega_D)$.  Now the fiber of the $\Ldet^D$-twisted cotangent bundle gives torsor over this (linearly compact) vector space, and hence a canonical (non-canonically trivial) extension $0\to \cE\tsr\omega_D\to \Phi(\cE)\to \cE\to 0$.  Moreover, from the action of the loop group on the twisted cotangent bundle, we see that $\Phi(\cE)$ does not depend on the trivialization $\phi$, \ie $\Phi(\cE)$ is functorial under isomorphisms of $\cE$'s.  Then by an argument as in Lemma~\ref{appx|lem:Phi fnctr}, we can extend the functoriality to non-invertible morphisms.  One can then extend it to non-free coherent sheaves and use a similar argument to the one for Theorem~\ref{appx|main thm} to finish the argument.
\end{rem}

\subsection{The case of characteristic~$p$}\label{appx|subs:charp}
Now suppose that $S$ is a scheme in characteristic~$p$, \ie for a prime~$p$ we have $p\O_S = 0$.  Then there is a component~$\rI$ of~$\Connhf$ classifying irreducible $\D$-modules.  We identify $\Connhf$ with $\Conn$ by ${\cdot}\tsr \omega^{\tsr(p-1)/2}$.  For further reference, denote this isomorphism by
\[
    \tau\colon \Conn \to \Connhf
\]
and the corresponding morphism $\Coh \to \Coh$ will be denoted by~$\tilde\tau$.  Recall the Azumaya algebra $\tilde\D_{C/S}$ on~$T^*(C\fr[S]/S)$ whose pushforward to~$C\fr[S]$ is isomorphic to~$\Fr_{C/S*}\D_{C/S}$.  For an $S$-scheme $S'$, the $S'$-points of~$\rI$ correspond to pairs $(y,\cE)$ where $y$ is an $S'$-point of~$T^*C\fr[S]$, and $\cE$ is a splitting of~$y^*\tilde\D_{C/S}$.  In other words, $\rI$ is isomorphic to (the ``total space'' of) the $\Gm$-gerbe $\dd_C$ on~$T^*C\fr$ corresponding to the algebra~$\tilde\D_{C/S}$.  We want to describe the restriction of $(\Ldet',\na_{\det})$ to~$\rI$.  In the text below we write $C\fr$, $\x$, $T^*C$, etc.\ for $C\fr[S]$, $\x_S$, $T^*(C/S)$, etc.

Because of the equivalence $\tilde\D_C \hmod \isoto \D_{T^*C,\th} \hmod$ (where $\th$ is the canonical 1-form on~$T^*C$), the gerbe~$\dd_C$ also classifies the irreducible modules over $\D_{T^*C,\th}$.  Therefore we can consider the universal object: this is a coherent sheaf on $\rI \x T^*C$ with connection in the $T^*C$-direction and with support given by $\rI \xx_{T^*C\fr} T^*C$.  Now if we apply the relative Frobenius twist over~$\rI$ to this sheaf, the resulting sheaf on $\rI \x T^*C\fr$ will have connection along both factors.  So we get a $\D$-module on $\rI \x T^*C\fr$ supported on the Frobenius neighborhood of the ``diagonal'' or, more precisely, of the graph of $v \colon \rI \to T^*C\fr$.  The restriction of this $\D$-module to the graph itself is a line bundle on~$\rI$ with connection which we will denote by $(\L_{\on{univ}}, \na_{\on{univ}})$.

\begin{prop}\label{appx|prop:det on I}
  There is a canonical isomorphism of line bundles with connections on~$\rI$:
  \begin{equation}\label{appx|eq:det on I}
    (\Ldet',\na_{\det})|_\rI  \isom  (\L_{\on{univ}}, \na_{\on{univ}} - v^*\th).
  \end{equation}
\end{prop}

\begin{proof}
  We will first construct an isomorphism between the LHS and the RHS, and then prove it is compatible with connections.

  To construct the isomorphism, in turn, we'll prove that the two sides become isomorphic after pullback by $F := \Fr_{C/S} \x_{C\fr} \id_\rI$, and then show that it descends to $\rI$.  Pick a point $\bar y\colon S'\to \rI$, and let us compare the fibers (\ie pullbacks) of both sides of~\ref{appx|eq:det on I} at~$\bar y$.  Denote by~$y$ the image of~$\bar y$ in~$T^*C\fr$, and by~$x$ the corresponding $S'$-point of~$C\fr$.  By definition of the stack~$\rI$, the point~$\bar y$ corresponds to a coherent sheaf~$\cF_{\bar y}$ on $C \x S'$ endowed with a $\ohf$-connection ``in the $C$-direction''; moreover, $\cF_{\bar y}$ is a pushforward of a line bundle from $C\x_{C\fr,x}S'$.  Then the pullback of~$\Ldet'$ under~$\bar y$ is given by
  \begin{equation*}
      \bar y^*\Ldet' = \det R\pi_{S'*} \cF_{\bar y} = \det \pi_{S'*} \cF_{\bar y}
  \end{equation*}
  where $\pi_{S'}$ denotes the projection $C \x S'\to S'$.  Here in the last equality we used that the support of~$\cF_{\bar y}$ is affine (actually, finite) over~$S'$.  Since $\bar y$ belongs to~$\rI$, the support of~$\cF_{\bar y}$ lies inside $C \xx_{C\fr} \Gamma_x \ctd C \x S'$ where $\Gamma_x \ctd C\fr \x S'$ is the graph of~$x$.

  Now we want to use the idea that a sheaf supported on the Frobenius neighborhood of a point carries a natural filtration (induced by the one on the structure sheaf given by powers of the maximal ideal).  Namely, suppose that we have a lift~$\tilde x$ of~$x$ from~$C\fr$ to~$C$.  Then $\cF_{\bar y}$ is filtered by powers of the ideal of $\Gamma_{\tilde x} \ctd C\x S'$.  Moreover, if $\L$ denotes the pullback of $\cF_{\bar y}$ to $\Gamma_{\tilde x}$ (note that $\L$ is a line bundle) then the associated graded of this filtration is supported on $\Gamma_{\tilde x} \isom S'$ and is isomorphic to $\dsum_{i=0}^{p-1} (\L\tsr \tilde x^*(\omega_C^{\tsr i}))$.  Now, pushing this sheaf forward to~$S'$ and taking the determinant, we get
  \begin{equation}\label{appx|eq:det-calc}
      \det \pi_{S'*} \cF_{\bar y} \isom \det \left(\dsum_{i=0}^{p-1} (\L\tsr\tilde x^*(\omega_C^{\tsr i}))\right)
        \isom \tprod_{i=0}^{p-1} (\L\tsr\tilde x^*(\omega_C^{\tsr i}))
        \isom \L^{\tsr p}\tsr\tilde x^*\omega_C^{\tsr\frac{p(p-1)}2}
  \end{equation}

  That is, we get that $\bar y^*\Ldet' \isom \L^{\tsr p}\tsr\tilde x^*\omega_C^{\tsr p(p-1)/2}$.  We claim that $\bar y^*\L_{\on{univ}}$ is isomorphic to the same thing.  Indeed, recall that in our definition of $(\L_{\on{univ}}, \na_{\on{univ}})$, we first identified $\Connhf$ with $\Conn$.  After this identification, the point~$\bar y$ corresponds to the sheaf $\cF'_{\bar y} := \cF_{\bar y} \tsr (\omega_C^{\tsr(p-1)/2} \bx\O_{S'})$ equipped with a connection~$\na_{\cF'_{\bar y}}$.  Under the equivalence $\D_C\hmod \isoto \D_{T^*C,\th}$ the $\D$-module $(\cF'_{\bar y},\na_{\cF'_{\bar y}})$ corresponds to a $\D$-module $(\cF''_{\bar y},\na_{\cF''_{\bar y}})$ of $p$-curvature $\th$.  One can factor this equivalence as pullback under $T^*C\to C$ followed by central reduction from $\D_{T^*C}$ to $\D_{T^*C,\th}$.  It is easy to see that for (families of) irreducible $\D_C$-modules this central reduction is equivalent to restriction to the corresponding Frobenius neighborhood of a point in $T^*C$.  Thus $\cF''_{\bar y}$ is isomorphic to the pullback of $\cF'_{\bar y}$ to $T^*C \xx_{T^*C\fr} \Gamma_y$.  Finally, to get $(\L_{\on{univ}})_y$, we must apply the Frobenius twist (relative to~$S'$) to~$\cF''_{\bar y}$ and then restrict to~$\Gamma_y$.  Denote by $\gamma_x = (\id_{S'},x) \colon S' \to S'\x C\fr$ the embedding of the graph, and define $\gamma_{\tilde x} \colon S'\to S'\x C$ similarly.  Now, by the base-change formula, we see that $(\L_{\on{univ}})_y \isom \gamma_x^*\cF_{\bar y}\pfr[S'] \isom (\gamma_{\tilde x}^*\cF'_{\bar y})\fr[S'] \isom (\gamma_{\tilde x}^*\cF_{\bar y} \tsr x^*\omega_C^{\tsr(p-1)/2})\fr[S'] \isom \L^{\tsr p} \tsr x^*\omega_C^{\tsr p(p-1)/2}$, which coincides with the right-hand side of~\eqref{appx|eq:det-calc}.

  The constructed isomorphism is evidently compatible with pullbacks, but it uses a lift~$\tilde x$ of~$x$ to~$C$.  This means that we've constructed an isomorphism~$\tilde\a$ of pullbacks of $\Ldet'$ and $\L_{\on{univ}}$ under $\tilde\rI := C\xx_{C\fr}\rI \to \rI$.  So it remains to show that this isomorphism descends to~$\rI$, and then that this descended isomorphism is compatible with connections.

  For the former, let us further pull back~$\tilde\a$ to \ncmd{\ttI}{{\Tilde{\tilde\rI}}}%
  $\ttI := T^*C\xx_{T^*C\fr}\rI \xra{\phi'} \tilde\rI \xra{\phi''} \rI$.  Note that $\ttI$ is a $\Gm$-gerbe on~$T^*C$ with canonical connection~$\na_\ttI$ (as on a Frobenius pullback), and, denoting $\phi=\phi''\circ\phi'$, line bundles $\phi^*\Ldet'$ and $\phi^*\L_{\on{univ}}$ have canonical connections ``in the horizontal direction with respect to $\na_\ttI$.''  Moreover, the morphism $\phi'^*\tilde\a$ descends to~$\rI$ if and only if it is compatible with these connections (and in this case the resulting morphism is, of course, unique).  The fact that $\phi'^*\tilde\a$ descends to~$\tilde\rI$ means that the two connections coincide in the vertical direction in $T^*C$.  So the difference between the connections is a 1-form $\nu_C$ on~$T^*C$ vanishing on the fibers which is closed (because the connections are flat).

  Now note that $\nu_C$ depends on~$C$ in a way compatible with base-change in~$S$ and \'etale base-change in~$C$.  One can prove that the only such ``natural'' one-forms (without the closedness assumption) are of the form $\la\th$ where $\th$ is the canonical 1-form on the cotangent bundle and $\la$ is a scalar (actually $\la\in\Fp$ since $\nu_C$ is defined for any $S$~and~$C$ in characteristic~$p$).  But since we know that $\nu_C$ is closed, we must have $\la=0$, hence $\nu_C=0$ for any~$C$, which finishes the proof that $\tilde\a$ descends to~$\rI$.  We denote by $\a\colon \Ldet' \isoto \L_{\on{univ}}$  the descended isomorphism.


  Now we prove that $\a$ is compatible with connections as indicated in~\eqref{appx|eq:det on I}.  Similarly to the previous paragraph, we get a 1-form on~$T^*C\fr$ which is ``natural in $S$~and~$C$.''  Hence, by the same reasoning, this form must be equal to $\la\th\fr$ for some constant $\la$, so that we have
  \begin{equation}\label{appx|eq:na+la th}
       \a \circ \na_{\det} \circ \a^{-1} = \na_{\on{univ}} - \la v^*\th\fr,
  \end{equation}
  and we need to show that $\la=1$.  We are going to deduce this from the linearity of $\na_{\det}$ along the fibers of the projection $\Connhf \to \Coh$.

  Namely, consider the map $\rho\colon C\fr \to \Coh$ which associates to a point~$x$ of~$C$ the structure sheaf of the Frobenius neighborhood of (the graph of) $x$.  In other words, $\rho$ corresponds to the coherent sheaf on $C\x C\fr$ given by the pushforward of~$\O$ from the graph of $\Fr_C$.  Then, for any open $U\ctd C$, maps $s\colon U\fr\to\Conn$ lifting~$\rho|_{U\fr}$ correspond bijectively to connections $\na = d+\omega$ on~$\O_U$; we will denote this correspondence by $\omega \mapsto s_\omega'$ and let $s_\omega=\tau\circ s_\omega'$.  For any such~$\omega$, $s_\omega^*\na_{\det}$ defines a connection on~$\rho^*\tilde\tau^*\Ldet$ which depends linearly on~$\omega$ (this linearity follows from the construction of~$\na_{\det}$).  On the other hand, let us pull back the right-hand side of~\eqref{appx|eq:na+la th} under $s_\omega$.  One can check that $s_\omega^*\Lna_{\on{univ}} \isom (\O_{U\fr},d+\omega\fr)$, and the composite isomorphism $\rho^*\tilde\tau^*\Ldet \xra{s_\omega^*\a} s_\omega^*\L_{\on{univ}} \isoto \O_{U\fr}$ does not depend on~$\omega$.  For the second summand, note that the composition $v\circ s_\omega$ is the section of $T^*C\fr\to c\fr$ corresponding to the $1$-form given by the $p$-curvature $\curv_p(\O_C,d+\omega) = \omega\fr-\sC(\omega)$.  So the RHS of~\eqref{appx|eq:na+la th} can be rewritten as
  \[
      s_\omega^* (\L_{\on{univ}}, \na_{\on{univ}} - \la v^*\th\fr) \isom (\O_{U\fr}, d + (1-\la) \omega\fr + \la\sC(\omega)).
  \]
  Since we know that the connection in the LHS must be linear in~$\omega$, we should have $\la=1$, which finishes the proof.
\end{proof}

\end{document}

 ..
we have a symplectic surface $(X,\Om)$ with a $\Gm$-gerbe $\g$ and an automorphism~$\phi$ of the pair $(X,\g)$ such that $\phi^*\Om=\la\Om$ for some $\la\in\k^\x$, then the corresponding automorphism $\tilde\phi$ of the moduli space $\rM_\g$ of (coherent, properly supported) $\g$-modules will satisfy $\tilde\phi^*\Om_{\rM_\g} = \la\Om_{\rM_\g}$ where $\Om_{\rM_\g}$ is the symplectic form constructed by the Serre duality. One can also formulate an in-families version of this statement.
\xi_{0,c}-> \eta_{\Phi,c}
sh

ArXiv abstract:

We prove a version of quantum geometric Langlands conjecture in characteristic $p$.  Namely,  we construct an equivalence of certain localizations of derived categories of twisted crystalline $\mathcal D$-modules on the stack of rank $N$ vector bundles on an algebraic curve $C$ in characteristic $p$.  The twisting parameters are related in the way predicted by the conjecture, and are assumed to be irrational (i.e., not in $\mathbb  F_p$).
We thus extend the results of math/0602255 concerning the similar problem for the usual (non-quantum) geometric Langlands.

In the course of the proof, we introduce a generalization of $p$-curvature for line bundles with non-flat connections, define quantum analogs of Hecke functors in characteristic $p$ and construct a Liouville vector field on the space of de Rham local systems on $C$.